\documentclass{amsart}
\usepackage{xcolor}
\usepackage[utf8]{inputenc}
\usepackage[all]{xy}
\usepackage{enumerate,multicol,stmaryrd}
\usepackage{tabularx}
\usepackage{amsaddr}

 \usepackage{amsbsy}			% para símbolos matemáticos em negrito
 \usepackage{amscd}			% para diagramas
 \usepackage{amsfonts}			% fontes AMS
 \usepackage{amsmath}			% facilidades matemáticas
 \usepackage{amssymb}			% para os símbolos mais antigos
 \usepackage{amstext}			% para fragmentos tipo texto em modo matemático
\usepackage{amsthm}			% para teoremas
\usepackage{hyperref}			% Amplo suporte para hipertexto em LaTeX
\usepackage{cleveref}			% Referência cruzada inteligente
\usepackage{dsfont}			% para o estilo de conjuntos de números $\mathds{R}$
 \usepackage{ifthen}			% comandos de condição em LaTeX
\usepackage{listings}           	% para inserir códigos de outras linguagens de programação
 \usepackage{lscape}             	% para imprimir alguma página no formato paisagem
\usepackage{mathabx}			% conjunto de simbolos matemáticos
 \usepackage{mathrsfs}			% suporte para fontes RSFS
 \usepackage{pdfpages}           	% para inserir páginas PDF no texto

\usepackage{graphicx}
\usepackage{graphics}      % para acrescentar links
\usepackage{fancybox}

%\usepackage{lineno}
%\linenumbers

\newenvironment{declaration}{\noindent\textbf{Declaration and Statements}}{}
\newtheorem{theorem}{Theorem}[section]

\newtheorem{proposition}[theorem]{Proposition}
\newtheorem{definition}{Definition}
\newtheorem{corollary}[theorem]{Corollary}
\newtheorem{lemma}{Lemma}
\newtheorem{remark}{Remark}
\newtheorem{example}{Example}

\newtheorem{ithm}{Theorem}[section]

\def\cal#1{\mathcal{#1}}
\def\bb#1{\mathbb{#1}}
\def\lie#1{\mathfrak{#1}}

\usepackage[all]{xy}

\def\co{\colon}

\newcommand{\ga}{\textsl{g}}

\makeatletter
\renewcommand{\email}[2][]{%
	\ifx\emails\@empty\relax\else{\g@addto@macro\emails{,\space}}\fi%
	\@ifnotempty{#1}{\g@addto@macro\emails{\textrm{(#1)}\space}}%
	\g@addto@macro\emails{#2}%
}
\makeatother

\title{Symmetric spaces as adjoint orbits and their geometries}

\author{Leonardo F. Cavenaghi$^{\dagger}$ }
\address{Instituto de Matemática, Estatística e Computação Científica -- Unicamp, Rua Sérgio Buarque de Holanda, 651, 13083-859, Campinas, SP, Brazil}
\email[$\dagger$]{leonardofcavenaghi@gmail.com}
\author{Carolina Garcia$^{\ast}$}
\address{Instituto de Matemática, Estatística e Computação Científica -- Unicamp, Rua Sérgio Buarque de Holanda, 651, 13083-859, Campinas, SP, Brazil}
\email[$\ast$]{acgr93@gmail.com}
\author{Lino Grama$^{\star}$}
\address{Instituto de Matemática, Estatística e Computação Científica -- Unicamp, Rua Sérgio Buarque de Holanda, 651, 13083-859, Campinas, SP, Brazil}
\email[$\star$]{lino@ime.unicamp.br}
\author{Luiz A. B. San Martin$^{\bullet}$}
\address{Instituto de Matemática, Estatística e Computação Científica -- Unicamp, Rua Sérgio Buarque de Holanda, 651, 13083-859, Campinas, SP, Brazil}
\email[$\bullet$]{smartin@unicamp.br}

\begin{document}

\subjclass[2000]{53C30,53C35,53D12}
\keywords{Symmetric spaces; Adjoint Orbits; Flag manifolds; Lagrangian submanifolds; non-negative curvatures}
\pretolerance=10000

\begin{abstract}
We realize specific classical symmetric spaces, like the semi-K\"ahler symmetric spaces discovered by Berger, as cotangent bundles of symmetric flag manifolds. These realizations enable us to describe these cotangent bundles' geodesics and Lagrangian submanifolds.

As a final application, we present the first examples of vector bundles over simply connected manifolds with nonnegative curvature that cannot accommodate metrics with nonnegative sectional curvature, even though their associated unit sphere bundles can indeed accommodate such metrics. Our examples are derived from explicit bundle constructions over symmetric flag spaces.
\end{abstract}

\maketitle

\section*{Declaration}
\begin{declaration}
All authors declare that they have no conflicts of interest.

Our manuscript has no associated data.
\end{declaration}

\section{Introduction}

Several authors, including Berger, Kobayashi, Nomizu, and Helgason, have extensively investigated symmetric spaces. As per the definition in \cite{KN}, a symmetric space is a triple denoted as $(G, H, \sigma)$, comprising a connected Lie group $G$, a closed subgroup $H$, and an involutive automorphism $\sigma$ of $G$ with the property that $G_0^\sigma \subset H \subset G^\sigma$. Here, $G^\sigma$ represents the set of elements in $G$ for which $\sigma(g) = g$, and $G_0^\sigma$ stands for the identity-connected component of $G^\sigma$. When the triple $(G, H, \sigma)$ fulfills these conditions, the quotient space $G/H$ is termed an affine symmetric manifold.

Let $\mathfrak{g}$ and $\mathfrak{h}$ be the Lie algebras of $G$ and $H$, respectively, then the symmetric Lie algebra $( \mathfrak{g}, \mathfrak{h}, \sigma)$ has the canonical decomposition $ \mathfrak{g} = \mathfrak{h}+\mathfrak{m}$ satisfying $[ \mathfrak{h}, \mathfrak{m} ] \subset \mathfrak{m}$ and $[ \mathfrak{m}, \mathfrak{m} ] \subset \mathfrak{h}$. If $\mathrm{ad}_\mathfrak{g} (\mathfrak{h})$ is compact, then $(\mathfrak{g},\mathfrak{h},\sigma)$ is called an \emph{orthogonal symmetric Lie algebra}. These are compact or non-compact types, and four classes with their geometric properties obstructed. See \cite[Theorem 8.6, p. 256]{KN}
\begin{theorem} 
	 Let $(G,H,\sigma)$ be a symmetric space with $\mathrm{Ad}_G(\lie h)$ compact and let $(\mathfrak{g},\mathfrak{h},\sigma)$ be its orthogonal symmetric Lie algebra. Take any $G$-invariant Riemannian metric on $G/H$. Then,
	\begin{itemize}
		\item[(1)] if $(\mathfrak{g},\mathfrak{h},\sigma)$ is of compact type, then $G/H$ is a compact Riemannian symmetric space with nonnegative sectional curvature and positive definite Ricci tensor
		\item[(2)] if $(\mathfrak{g},\mathfrak{h},\sigma)$ is of non-compact type, then $G/H$ is a simply connected non-compact Riemannian symmetric space with non-positive sectional curvature and negative definite Ricci tensor and is diffeomorphic to a Euclidean space.
	\end{itemize}
\end{theorem}

In \cite{HELGA}, Helgason presents a table featuring Riemannian symmetric spaces, including:
\begin{enumerate}
    \item The Grassmannian $\mathrm{SU}(p+q)/\mathrm{S}(\mathrm{U}(p)\times \mathrm{U}(q))$: This represents complex subspaces of $\mathbb{C}^n$.

    \item The space $\mathrm{Sp}(n)/\mathrm{U}(n)$: This space pertains to complex structures on $\mathbb{H}^n$ that are compatible with its standard inner product.

    \item The space $\mathrm{SO}(2n)/\mathrm{U}(n)$: This space is associated with orthogonal complex structures on $\mathbb{R}^{2n}$. 
\end{enumerate}
All of these examples exhibit orthogonal symmetric Lie algebras.

Let $G$ be a complex semisimple Lie group, with Lie algebra $\mathfrak{g}$. Denote by $\Sigma$ the simple roots of $\mathfrak{g}$ concerning a Cartan subalgebra $\mathfrak{h}$. For each subset $\Theta\subset\Sigma$ one can define a parabolic subgroup $P_\Theta$ and the corresponding flag manifold is the homogeneous space $\mathbb{F}_\Theta=G/P_\Theta$. Let $U\subset G$ be the compact real form. One can show that $U$ also acts transitively on $\mathbb{F}_\Theta$ and we have $\mathbb{F}_\Theta=U/U_\Theta$, where $U_\Theta=U\cap P_\Theta$. When there is no risk of confusion, we omit the $\Theta$ in the notation. See Section \ref{sec:roots} for further details.  

In this paper, we will explore intriguing relationships between the geometry of symmetric flag manifolds and their cotangent bundles. Let us summarize its content: 
\begin{itemize}
	\item we prove that if a complex flag manifold $U/U_\Theta$ is a symmetric space, then its cotangent bundle is a symmetric space (Theorem \ref{thm:mainintro});
	\item we determine an automorphism $\sigma$ of $G$ such that $(U,U_\Theta,\sigma)$ and $(G,G^\sigma ,\sigma)$ are symmetric spaces with $G/G^\sigma$ being a realization of the cotangent bundle of $U/U_\Theta$ (Theorem \ref{thm:mainintro});
	\item we find Lagrangian submanifolds of the adjoint orbit $G/G^\sigma$ with respect to a non-canonical symplectic form (Theorem \ref{thm:secondmainintro});
 \item we produce the first example of a vector bundle over a simply connected Riemannian manifold whose associated sphere bundle admits a metric of nonnegative sectional curvature, although the corresponding vector bundle does not (Theorem \ref{thm:ziller}).
\end{itemize}

It is worth noting that a crucial element in constructing our main results is the description of the adjoint orbit of a complex semisimple (non-compact) Lie group $G$ as the cotangent bundle of a complex flag manifold, as outlined in \cite{ADORB}. More precisely, recall that given $\Theta \subset \Sigma$, an element $H_{\Theta} \in \mathfrak{h}$ is said to be \textit{characteristic} for $\Theta$ if one recognizes $\Theta = \{ \alpha \in \Sigma : \alpha (H_\Theta) =0 \}$. 

 \begin{theorem}
    \label{bbb}
	\cite[Theorem 2.1]{ADORB} The adjoint orbit $\mathcal{O}(H_\Theta) =\mathrm{Ad}(G)\cdot H_\Theta \approx G/Z_\Theta$ of the characteristic element $H_\Theta$ is a $C^\infty$ vector bundle over $\mathbb{F}_\Theta$ that is isomorphic to the cotangent bundle $T^*\mathbb{F}_\Theta$. Moreover, we can write down a diffeomorphism $\iota :\mathrm{Ad}(G)\cdot H_\Theta \rightarrow T^*\mathbb{F}_\Theta$ such that
	\begin{enumerate}[$(a)$]
		\item $\iota$ is equivariant with respect to the actions of $U$, that is, for all $u\in U$,
		\begin{equation*}
			\iota \circ \mathrm{Ad} (u) = \tilde{u} \circ \iota,
		\end{equation*} 
		where $\tilde{u}$ is the lifting to $T^* \mathbb{F}_\Theta$ (via the differential) of the action of $u$ on $\mathbb{F}_\Theta$; and
		\item the pullback of the canonical simplectic form on $T^*\mathbb{F}_\Theta$ by $\iota$ is the (real) Kostant-Kirillov-Souriau form on the orbit.
	\end{enumerate}
\end{theorem}

\

Next, we will provide a detailed discussion of our results.

\begin{ithm}\label{thm:mainintro}
Let $\mathbb{F}_\Theta$ be a symmetric flag space identified with the homogeneous space $\mathbb{F}_\Theta = U/U_\Theta$.
Then there exists an element $H_\Theta$ in a fixed Cartan sub-algebra of the compact Lie algebra $\mathfrak{u}$ such that for $\sigma := C_{\exp H_\Theta}$, we have
\begin{equation}
	U_\Theta = \{ u\in U : C_{\exp H_\Theta} (u)=u \}.
	\end{equation}
	
	Moreover, let $G$ be the complex Lie group and $U$ be its compact real form. Then, the cotangent bundle of $\mathbb{F}_\Theta$ has a realization as a symmetric space.
If we use also the notation $\sigma$ for  $\mathrm{Ad}_\mathfrak{u}(\exp H_\Theta)$ and $\mathrm{Ad}_\mathfrak{g}(\exp H_\Theta)$, then $(\mathfrak{u} , \mathfrak{u}_\Theta ,\sigma)$ and  $(\mathfrak{g}, \mathfrak{z}_\mathfrak{g} (H_\Theta) , \sigma )$ are symmetric Lie algebras.
\end{ithm}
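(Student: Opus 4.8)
The plan is to reduce the whole statement to one eigenvalue computation for $\mathrm{ad}_{H_\Theta}$ on the root spaces, after choosing $H_\Theta$ with the correct normalization. First I would fix a maximal torus $\mathfrak{t}$ of $\mathfrak{u}$, let $\mathfrak{t}^{\mathbb{C}} \subset \mathfrak{g}$ be its complexification (a Cartan subalgebra of $\mathfrak{g}$), and take the root system $\Pi$ with a simple system $\Sigma$, so that every root takes purely imaginary values on $\mathfrak{t}$. Since $\mathbb{F}_\Theta$ is a symmetric flag manifold, Proposition \ref{P} lets me assume $\Theta = \Sigma \setminus \{\alpha_0\}$ with $\alpha_0$ cominuscule, i.e. the coefficient of $\alpha_0$ in the highest root equals $1$; equivalently the $\alpha_0$-coefficient $m_0(\alpha)$ of every $\alpha \in \Pi$ lies in $\{-1,0,1\}$. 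This cominuscule property is exactly what the construction requires, and it is the point at which the symmetry hypothesis enters.

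Next I would define the candidate element: let $H_\Theta \in \mathfrak{t}$ be the unique vector with $\alpha(H_\Theta)=0$ for $\alpha\in\Theta$ and $\alpha_0(H_\Theta)=\sqrt{-1}\,\pi$, so that $\alpha(H_\Theta)=\sqrt{-1}\,m_0(\alpha)\,\pi$ on each root space $\mathfrak{g}_\alpha$. Consequently $\mathrm{Ad}(\exp H_\Theta)$ acts on $\mathfrak{g}_\alpha$ by the scalar $e^{\sqrt{-1}\,m_0(\alpha)\pi}=(-1)^{m_0(\alpha)}$ and trivially on $\mathfrak{t}^{\mathbb{C}}$.

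From this single formula everything follows. Squaring gives the eigenvalue $(-1)^{2m_0(\alpha)}=1$ on every root space, whence $\mathrm{Ad}_{\mathfrak{g}}(\exp 2H_\Theta)=\mathrm{id}$; as $G$ is connected, $\exp 2H_\Theta \in Z(G)$, so $\sigma=C_{\exp H_\Theta}$ is a genuine involutive automorphism of $G$, and likewise of $U$ by restriction. For the fixed-point subalgebras I would note that $(-1)^{m_0(\alpha)}=1$ precisely when $m_0(\alpha)=0$, i.e. precisely when $\alpha\in\langle\Theta\rangle$; hence the $(+1)$-eigenspace of $\mathrm{Ad}_{\mathfrak{g}}(\exp H_\Theta)$ is $\mathfrak{t}^{\mathbb{C}}\oplus\sum_{\alpha\in\langle\Theta\rangle}\mathfrak{g}_\alpha=\mathfrak{z}_{\mathfrak{g}}(H_\Theta)$, and intersecting with $\mathfrak{u}$ yields $\mathfrak{u}_\Theta$. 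This is exactly where cominuscularity is indispensable: were $\alpha_0$ to occur with coefficient $2$ in some root, that root space would be spuriously fixed and the fixed algebra would strictly exceed $\mathfrak{z}_{\mathfrak{g}}(H_\Theta)$. The $\pm1$-eigenspace decomposition then automatically produces the bracket relations $[\mathfrak{k},\mathfrak{m}]\subseteq\mathfrak{m}$, $[\mathfrak{m},\mathfrak{m}]\subseteq\mathfrak{k}$ (and their complex counterparts), so both $(\mathfrak{u},\mathfrak{u}_\Theta,\sigma)$ and $(\mathfrak{g},\mathfrak{z}_{\mathfrak{g}}(H_\Theta),\sigma)$ are symmetric Lie algebras.

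Finally I would pass from Lie algebras to groups and then invoke the cotangent-bundle identification. At the group level $U^\sigma=Z_U(\exp H_\Theta)$ and $G^\sigma=Z_G(\exp H_\Theta)$, with Lie algebras $\mathfrak{u}_\Theta$ and $\mathfrak{z}_{\mathfrak{g}}(H_\Theta)$ by the computation above; the inclusion $Z_G(H_\Theta)\subseteq Z_G(\exp H_\Theta)=G^\sigma$ together with the equality of Lie algebras gives $G_0^\sigma\subseteq Z_\Theta\subseteq G^\sigma$, and similarly for $U$, so that $(G,Z_\Theta,\sigma)$ and $(U,U_\Theta,\sigma)$ are symmetric spaces; in particular $U_\Theta=Z_U(\exp H_\Theta)=U^\sigma$, the asserted fixed-point identity. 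Then Theorem \ref{bbb} identifies the adjoint orbit $\mathrm{Ad}(G)\cdot H_\Theta\cong G/Z_\Theta$ with $T^*\mathbb{F}_\Theta$, exhibiting the cotangent bundle as the symmetric space $G/Z_\Theta$. I expect the only genuine subtlety to be the component bookkeeping in these last group-level identifications — the connectedness of $U_\Theta$ and the precise relation between $Z_G(H_\Theta)$ and $Z_G(\exp H_\Theta)$; the eigenvalue computation, though it is the engine of the argument, is routine once $H_\Theta$ is normalized so that $\alpha_0(H_\Theta)=\sqrt{-1}\,\pi$.
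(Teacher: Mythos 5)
Your argument is correct in substance, but it takes a genuinely different route from the paper. The paper proves this theorem in Section \ref{sec:proofmain} by a case-by-case matrix computation: using Lemma \ref{lem:flagsym} it reduces to the three classical families $A_l$, $C_l$, $D_l$, writes down an explicit $H_\Theta$ in each case (e.g.\ $\tfrac{\sqrt{-1}\pi}{n}\mathrm{diag}(qI_p,-pI_q)$ for $A_l$), checks by hand that $g_\Theta^2=\exp(2H_\Theta)$ is central, and computes the fixed-point sets of $C_{g_\Theta}$ in $U$ and $G$ directly as block matrix groups. You instead give a uniform root-theoretic argument: normalize $H_\Theta$ so that the deleted simple root $\alpha_0$ satisfies $\alpha_0(H_\Theta)=\sqrt{-1}\pi$, use cominuscularity to get eigenvalues $(-1)^{m_0(\alpha)}=\pm 1$ on root spaces, and read off both the involutivity and the identification of the fixed subalgebra with $\mathfrak{z}_{\mathfrak{g}}(H_\Theta)$. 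Your normalization specializes exactly to the paper's explicit matrices (one checks $\alpha_p(H_\Theta)=\sqrt{-1}\pi$ in each case), so the two proofs are compatible; yours is shorter, type-independent, and isolates the role of the symmetry hypothesis, while the paper's computation additionally delivers the explicit realizations of Theorem \ref{thm:realizations}, the dual symmetric algebras, and the fiber intersections needed for Proposition \ref{lemfibinterS}. Two caveats. First, the input ``$\mathbb{F}_\Theta$ symmetric $\Rightarrow$ $\Theta=\Sigma\setminus\{\alpha_0\}$ with $\alpha_0$ cominuscule'' does not come from Proposition \ref{P}, which is only the identification $\mathbb{F}_\Theta=U/U_\Theta$; in the paper it is extracted from the comparison of Berger's and the flag tables (Lemma \ref{lem:flagsym}), so you should cite that characterization (or prove it from $[\mathfrak{m},\mathfrak{m}]\subset\mathfrak{u}_\Theta$) rather than attribute it to Proposition \ref{P}. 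Second, your eigenvalue computation only gives $Z_U(\exp H_\Theta)$ and $Z_U(H_\Theta)$ the same Lie algebra, hence the sandwich $U_\Theta=(U^\sigma)_0\subseteq U^\sigma$, which suffices for the symmetric-space structure but is weaker than the asserted equality $U_\Theta=\{u\in U: C_{\exp H_\Theta}(u)=u\}$; the paper gets the exact equality by the explicit block computation, and you correctly flag this component bookkeeping as the remaining subtlety, so it should be checked (it holds in all three classical cases).
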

A detailed explicit description of $\bb F_{\Theta}$ as $U/U_{\Theta}$ is presented in Section \ref{sec:proofmain}. Related to that, we obtain the following result:
\begin{ithm}\label{thm:realizations}
    The following flag manifolds $\bb F_{\Theta}$ are symmetric spaces with corresponding symmetric cotangent bundle $T^*\bb F_{\Theta}$:
    \begin{center}
    \begin{tabular}{|c|c|}
    \hline
    $\bb F_{\Theta}$ &  $T^*\bb F_{\Theta}$\\
     \hline
      $\mathrm{SU}(p+q)/\mathrm{S}(\mathrm{U}(p)\times \mathrm{U}(q))$   &  $\mathrm{Sl}(p+q, \bb C)/\mathrm{S}(\mathrm{Gl}(p,\bb C)\times \mathrm{Gl}(q,\bb C))$\\
       \hline
       $\mathrm{Sp}(l)/\mathrm{U}(l)$ &  $\mathrm{Sp}(l,\bb C)/\mathrm{Gl}(l,\bb C)$\\
        \hline
      $\mathrm{SO}(2l)/\mathrm{U}(l)$ & $\mathrm{SO}_{\bb C}(2l)/\mathrm{Gl}(l,\bb C)$.\\
      \hline
    \end{tabular}
    \end{center}
\end{ithm}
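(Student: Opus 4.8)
The plan is to read the three rows of the table as explicit instances of the general construction in Theorem~\ref{thm:mainintro}, so that the proof becomes a case-by-case verification rather than a new argument. For each family I would first exhibit the compact group $U$, the centralizer $U_\Theta = K$, and a characteristic element $H_\Theta$ in a fixed Cartan subalgebra $\mathfrak{t}\subset\mathfrak{u}$ for which $C_{\exp H_\Theta}$ has fixed-point set exactly $U_\Theta$, matching the identification $\mathbb{F}_\Theta = U/U_\Theta$ of Proposition~\ref{P}. The left-hand entries are precisely the classical irreducible Hermitian symmetric spaces of compact type singled out in the introduction (type $\mathrm{AIII}$ for $\mathrm{SU}(p+q)/\mathrm{S}(\mathrm{U}(p)\times\mathrm{U}(q))$, type $\mathrm{CI}$ for $\mathrm{Sp}(l)/\mathrm{U}(l)$, and type $\mathrm{DIII}$ for $\mathrm{SO}(2l)/\mathrm{U}(l)$), so that these \emph{are} symmetric flag spaces is classical; once this is recorded, Theorem~\ref{thm:mainintro} immediately supplies the symmetric-space structure on $T^*\mathbb{F}_\Theta$ and produces the symmetric Lie algebras $(\mathfrak{u},\mathfrak{u}_\Theta,\sigma)$ and $(\mathfrak{g},\mathfrak{z}_\mathfrak{g}(H_\Theta),\sigma)$. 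The genuine content of the theorem therefore lies entirely in identifying the complex groups appearing in the right-hand column.

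For that second step I would invoke Theorem~\ref{bbb}: the cotangent bundle is realized as the adjoint orbit $\mathrm{Ad}(G)\cdot H_\Theta \cong G/Z_\Theta$, where $G$ is the complexification of $U$ and $Z_\Theta = Z_G(H_\Theta)$ is the centralizer of the characteristic element. The task thus reduces to computing $Z_G(H_\Theta)$ explicitly in each case. In type $\mathrm{AIII}$ I take $H_\Theta$ proportional to $\sqrt{-1}\,\mathrm{diag}(q,\dots,q,-p,\dots,-p)$, with $p$ entries equal to $q$ and $q$ entries equal to $-p$ so that $\mathrm{tr}\,H_\Theta = 0$; since its two eigenvalues are distinct, any matrix commuting with it is block diagonal, and the centralizer in $G = \mathrm{Sl}(p+q,\mathbb{C})$ is the block group $\mathrm{S}(\mathrm{Gl}(p,\mathbb{C})\times\mathrm{Gl}(q,\mathbb{C}))$. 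In types $\mathrm{CI}$ and $\mathrm{DIII}$ I choose $H_\Theta$ to be the generator of the $\mathrm{U}(l)$-isotropy, equivalently the element defining the invariant complex structure of the Hermitian symmetric space; its centralizer in $\mathrm{Sp}(l,\mathbb{C})$, respectively in $\mathrm{SO}_{\mathbb{C}}(2l)$, is in each case $\mathrm{Gl}(l,\mathbb{C})$, embedded by the standard inclusion of $\mathrm{Gl}(l,\mathbb{C})$ into the complex symplectic and complex orthogonal groups as the stabilizer of the corresponding polarization.

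The hard part will be the bookkeeping in these centralizer computations, specifically arranging that two requirements hold simultaneously. First, the normalization of $H_\Theta$ must make $\sigma = C_{\exp H_\Theta}$ an honest involution: since $\sigma|_{\mathfrak{m}} = \exp(\mathrm{ad}(H_\Theta))|_{\mathfrak{m}}$, I would scale $H_\Theta$ so that the nonzero eigenvalues of $\mathrm{ad}(H_\Theta)$ equal $\pm\pi\sqrt{-1}$, whence $\sigma$ acts as $+1$ on $\mathfrak{u}_\Theta$ and as $-1$ on $\mathfrak{m}$ and $\sigma^2 = \mathrm{id}$; crucially, this rescaling does not change $Z_G(H_\Theta)$, so the orbit realization is unaffected. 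Second, I must check that $Z_G(H_\Theta)$ is connected and carries the correct determinant constraint — the ``$\mathrm{S}$'' in type $\mathrm{AIII}$ — rather than a proper or larger subgroup; this follows because $\mathrm{ad}(H_\Theta)$ has spectrum contained in $\{0,\pm c\}$, which is exactly the condition that $\mathbb{F}_\Theta$ be symmetric and that the orbit be a cotangent bundle, and it is the same eigenvalue hypothesis underlying Theorem~\ref{thm:mainintro}. Verifying that $\sigma^c\mid\mathfrak{u}^*$ recovers the noncompact dual then closes the identification, and the three realizations in the table follow directly.
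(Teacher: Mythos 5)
Your proposal is correct and follows essentially the same route as the paper's Section \ref{sec:proofmain}: an explicit characteristic element $H_\Theta$ in each of the three classical cases, normalized so that the nonzero eigenvalues of $\mathrm{ad}(H_\Theta)$ are $\pm\pi\sqrt{-1}$ (the paper phrases this equivalently as $\exp(H_\Theta)^2$ being central, so that $C_{\exp H_\Theta}$ is an involution), followed by the block-matrix computation of $Z_U(H_\Theta)$ and $Z_G(H_\Theta)$ and the identification $T^*\mathbb{F}_\Theta\cong G/Z_\Theta$ via Theorem \ref{orbitafibradoSM}. The only detail you omit is the paper's auxiliary conjugation by the Cayley-type matrix $f$ in the $D_l$ case to put $\mathfrak{so}(2l,\mathbb{C})$ in a form where the centralizer is block-diagonal, which is purely computational.
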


Let us recall the definitions provided in \cite{BERGER}. In this context, an affine symmetric space denoted as $G/H$ (or a symmetric Lie algebra) is referred to as $\mathbb{C}$-symmetric if there exists a complex structure on the vector space $\mathfrak{m}$ that remains invariant under the linear representation ($\mathrm{Ad}(H), \mathfrak{m}$) [or (ad($\mathfrak{h}$), $\mathfrak{m}$)].

Furthermore, an affine symmetric space $G/H$ (or symmetric Lie algebra) is termed semi-K\"ahler if it satisfies the following criteria:
\begin{enumerate}
    \item It is $\mathbb{C}$-symmetric;
    \item For a specific complex structure on $\mathfrak{m}$, the representation ($\mathrm{Ad}(H), \mathfrak{m}$) [or (ad($\mathfrak{h}$), $\mathfrak{m}$)] preserves a non-degenerate Hermitian form defined on $\mathfrak{m}$.
\end{enumerate}
\begin{proposition}	\label{RRR} \cite{BERGER}
	Let $(\mathfrak{g},\mathfrak{h},\sigma)$ be a symmetric Lie algebra with $\mathfrak{g}$ simple. The symmetric Lie algebra is semi-K\"ahler if, and only if, the isotropy sub-algebra $\mathfrak{h}$ contains the Lie algebra $\mathbb{R}$.
\end{proposition}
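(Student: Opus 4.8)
The plan is to give the natural self-contained argument (the statement being attributed to Berger). Write the canonical decomposition $\mathfrak{g}=\mathfrak{h}+\mathfrak{m}$ and let $B$ be the Killing form of $\mathfrak{g}$, which is nondegenerate since $\mathfrak{g}$ is simple; as $\sigma$ fixes $\mathfrak{h}$ and acts by $-\id$ on $\mathfrak{m}$, the subspaces $\mathfrak{h}$ and $\mathfrak{m}$ are $B$-orthogonal and $B$ restricts nondegenerately to each. I will use three standard facts: that $\mathfrak{g}$ simple forces the isotropy representation $(\ad(\mathfrak{h}),\mathfrak{m})$ to be irreducible and faithful (the subalgebra generated by $\mathfrak{m}$ is an ideal, hence all of $\mathfrak{g}$); that each $\ad(X)$ is $B$-skew, so $\ad(\mathfrak{h})$ sits inside the skew endomorphisms of $(\mathfrak{m},B)$; and that every derivation of the semisimple algebra $\mathfrak{g}$ is inner.

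For the implication from a central $\mathbb{R}$, suppose $0\neq Z\in\mathfrak{z}(\mathfrak{h})$ and set $J_0:=\ad(Z)|_{\mathfrak{m}}$. Since $Z$ is central, $J_0$ commutes with all of $\ad(\mathfrak{h})|_{\mathfrak{m}}$, so it lies in the commutant of the isotropy representation; by the real Schur lemma this commutant is a division algebra, and the $B$-self-adjoint operator $J_0^{2}$ is then a scalar $c\,\id$. Irreducibility together with $\mathrm{tr}\,J_0=0$ rules out $c>0$ (an eigenspace of $J_0$ would be a proper invariant subspace), while faithfulness rules out $c=0$; hence $c<0$ and $J:=J_0/\sqrt{-c}$ is an $\ad(\mathfrak{h})$-invariant complex structure, so the pair is $\mathbb{C}$-symmetric. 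Because $J$ is then $B$-skew, $g:=B|_{\mathfrak{m}}$ is $J$-invariant and $h(X,Y):=g(X,Y)+\sqrt{-1}\,g(JX,Y)$ is an invariant nondegenerate Hermitian form, so the symmetric Lie algebra is semi-K\"ahler.

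For the converse, assume semi-K\"ahlerness: an invariant complex structure $J$ on $\mathfrak{m}$ together with an invariant nondegenerate Hermitian form $h$, whose real part $g$ is $J$-invariant and nondegenerate. The first, and I expect hardest, step is to promote $J$ to a $B$-skew operator. The $B$-adjoint $J^{t}$ again commutes with $\ad(\mathfrak{h})$ (because $\ad(\mathfrak{h})$ is $B$-skew), hence lies in the commutant, which for $\mathfrak{g}$ simple is $\mathbb{R}$ or $\mathbb{C}$ (the isotropy of an irreducible symmetric pair is never quaternionic); since $J$ exists it is $\mathbb{C}=\{a\,\id+bJ\}$, and $(J^{t})^{2}=-\id$ forces $J^{t}=\pm J$. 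The alternative $J^{t}=+J$ is exactly the ``$\mathfrak{g}$ complex'' situation and is excluded by $h$: writing $g=B(S\,\cdot,\cdot)$ with $S$ in the commutant and imposing that $J$ be $g$-skew collapses the identity to $g=0$, a contradiction. Hence $J^{t}=-J$.

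With $J$ now $B$-skew, define $D\in\End(\mathfrak{g})$ by $D|_{\mathfrak{h}}=0$ and $D|_{\mathfrak{m}}=J$. It is a derivation: the only nontrivial Jacobi check is $[JX,Y]+[X,JY]=0$ for $X,Y\in\mathfrak{m}$, and pairing the left-hand side against any $W\in\mathfrak{h}$ through $B$, using both the $\ad(\mathfrak{h})$-invariance of $J$ and its $B$-skewness, makes the pairing vanish, whence the bracket itself vanishes by nondegeneracy of $B|_{\mathfrak{h}}$. As $\mathfrak{g}$ is semisimple, $D=\ad(Z)$ for some $Z\in\mathfrak{g}$; from $D|_{\mathfrak{h}}=0$ we get $[Z,\mathfrak{h}]=0$, and $\sigma D\sigma^{-1}=D$ together with faithfulness gives $\sigma(Z)=Z$, i.e. $Z\in\mathfrak{h}$. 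Thus $Z$ is a nonzero central element of $\mathfrak{h}$, so $\mathfrak{h}$ contains $\mathbb{R}$. The decisive point of the whole argument is the forward reduction ``invariant Hermitian form $\Rightarrow$ $J$ is Killing-skew,'' since this is precisely what separates the genuinely semi-K\"ahler (Hermitian) pairs from the merely $\mathbb{C}$-symmetric complex ones.
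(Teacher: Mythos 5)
The paper does not actually prove this proposition --- it is quoted verbatim from Berger --- so there is no internal argument to compare yours against; I can only assess the proposal on its own terms, and it has a genuine gap. Your entire argument rests on the claim that ``$\mathfrak{g}$ simple forces the isotropy representation $(\ad(\mathfrak{h}),\mathfrak{m})$ to be irreducible.'' That is true for \emph{orthogonal} symmetric Lie algebras (the Riemannian case, where the paper indeed records it as a separate proposition), but it is false for general symmetric Lie algebras with $\mathfrak{g}$ simple --- and it fails precisely for the semi-K\"ahler examples this proposition is meant to cover. In the paper's own setting one has $\mathfrak{m}=\mathfrak{n}_\Theta^-\oplus\mathfrak{n}_\Theta^+$ with \emph{both} summands $\ad(\mathfrak{z}_\Theta)$-invariant, so the isotropy representation is reducible and its commutant is not a division algebra; Schur's lemma, the claim that $J_0^2$ is a scalar, and the converse's assertion that the commutant is $\mathbb{R}$ or $\mathbb{C}$ all break down. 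A minimal counterexample to the irreducibility claim is $(\mathfrak{sl}(2,\mathbb{R}),\mathfrak{so}(1,1))$ with $\mathfrak{m}=\mathbb{R}E_{12}\oplus\mathbb{R}E_{21}$: here $\ad(Z)|_{\mathfrak{m}}=\mathrm{diag}(2,-2)$ has square $+4\,\id$, no rescaling of it is a complex structure, and in fact no $\ad(\mathfrak{h})$-invariant complex structure on $\mathfrak{m}$ exists at all. This shows your forward implication (``nonzero central $Z$ $\Rightarrow$ $\mathbb{C}$-symmetric'') is not salvageable as stated: the step ``irreducibility rules out $c>0$'' is doing essential work, and without it the sign of $c$ genuinely can be positive. (It also shows that the proposition, read naively as ``$\dim\mathfrak{z}(\mathfrak{h})\geq 1$ iff semi-K\"ahler,'' needs a more careful reading of Berger's hypotheses or notation --- e.g.\ a distinction between elliptic and hyperbolic one-dimensional central factors --- which is a further sign that the one-line Schur argument cannot be the whole story.)

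What you have written is essentially the classical proof that an irreducible Riemannian symmetric space is Hermitian iff $\mathfrak{h}$ has nontrivial center, and in that restricted setting your argument is sound (faithfulness via the ideal $\{W\in\mathfrak{h}:[W,\mathfrak{m}]=0\}$ is fine, as is the derivation-extension step in the converse). To prove Berger's statement you would instead have to work with the decomposition of $\mathfrak{m}$ into the $\pm$ eigenspaces (or more generally the weight spaces) of $\ad(Z)$, show that in the semi-K\"ahler case the invariant complex structure acts by $\pm\sqrt{-1}$ on complementary isotropic pieces paired by the Killing form (as happens for $\mathfrak{n}_\Theta^\pm$), and handle the sign/type of the central element explicitly rather than deducing everything from Schur's lemma.
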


If $(\mathfrak{g},\mathfrak{h},\sigma)$ is a symmetric Lie algebra, we name $(\mathfrak{g},\mathfrak{h})$ as a symmetric Lie pair. Berger gives a table of all symmetric Lie pairs indicating which are semi-K\"ahler. Some symmetric semi-K\"ahler Lie pairs are: $(\mathfrak{sl}(p+q,\mathbb{C}),\mathfrak{sl}(p,\mathbb{C}) \oplus \mathfrak{sl}(q,\mathbb{C}) \oplus \mathbb{C}^*)$, $(\mathfrak{sp}(n,\mathbb{C}),\mathfrak{sl}(n,\mathbb{C}) \oplus \mathbb{C}^*)$ and $(\mathfrak{so}(2n,\mathbb{C}),\mathfrak{sl}(n,\mathbb{C}) \oplus \mathbb{C}^*)$. Note that the isotropy sub-algebras contain the complex vector space $\mathbb{C}$ in those examples, hence containing $\mathbb{R}$. Theorem \ref{thm:mainintro} verifies that each of the semi-K\"ahler symmetric spaces above described are cotangent spaces of some flag manifolds endowed with a symmetric space structure.

In the context of a flag symmetric space $\mathbb{F}_\Theta$, its symmetric Lie algebra is denoted as $(\mathfrak{u},\mathfrak{u}_\Theta,\sigma)$. Here, we also use the notation $\sigma$ to represent the automorphism $\mathrm{Ad}(\exp H_\Theta)$ acting on $\mathfrak{g}$. This symmetric Lie algebra follows a canonical decomposition as $\mathfrak{u}=\mathfrak{u}_\Theta + \mathfrak{m}$.

Let us introduce the subset $\mathcal{S}$, defined as $\mathrm{Ad}(\exp(\sqrt{-1}\mathfrak{m}) H_\Theta)$ within the adjoint orbit $\mathrm{Ad}(G)H_\Theta$. One of our key findings asserts that $\mathcal{S}$ forms a submanifold within the adjoint orbit $\mathrm{Ad}(G)H_\Theta$ and it can be understood as the dual symmetric space of $\mathbb{F}_\Theta$. Furthermore, $\mathcal{S}$ constitutes a Riemannian manifold that is diffeomorphic to a vector space. Additionally, if a fiber within the cotangent bundle $T^*\mathbb{F}_\Theta$ intersects with $\mathcal{S}$, such a fiber and $\mathcal{S}$ intersect transversely.

In Sections \ref{sec:oexamplinhovemaqui} and \ref{DSS}, we present the Examples \ref{exm} and \ref{exemplorealparte2}. There we discuss the real flag manifold $\mathrm{SO}(2)/\{ \pm I \} \cong \mathrm{S}^1$, which is a symmetric space, such that its cotangent bundle has a realization as the one-sheeted hyperboloid $Q= \{ (x,y,z) \in \mathbb{R}^3: x^2 + y^2 -z^2 =1 \}$, that is also a symmetric space. The flag manifold $\mathrm{S}^1$ is the intersection of $Q$ with the plane $z=0$. We study the orbit of the exponential of symmetric zero-trace matrices in the matrix\[\left( \begin{array}{cc}
    1&0\\0&-1
\end{array} \right) .\] Such an orbit has a realization as the curve $\mathcal{C} = \{ (0,y,z)\in \mathbb{R}^3 : y^2 -z^2 =1, \; y>0 \}$, which is intuitively symmetric with respect to the axis $Y$. In the complex case, the generalization of this orbit is the submanifold $\mathcal{S}$. Hence, these examples work as ``toy models'' to the general description given by Theorem \ref{thm:mainintro}.

Remarkable as well, in general, the intersection of the submanifold $\mathcal{S}$ with the fibers of the bundle $\mathrm{Ad}(G)H_\Theta \rightarrow \mathbb{F}_\Theta$ is either null or a single point. Then, the projection of the cotangent bundle restricted to the submanifold $\mathcal{S}$ is an injective map since each fiber intersecting $\mathcal{S}$ does so in a single element. Then, $\mathcal{S}$ is continuously deformed into a part of the sphere $\mathbb{F}_\Theta$:

\begin{ithm}\label{thm:aS}
	Let $\mathbb{F}_\Theta = U/U_\Theta$ be a flag symmetric space with symmetric Lie algebra
	\begin{equation*}
		\mathfrak{u} = \mathfrak{u}_\Theta + \mathfrak{m}.
	\end{equation*}
	Then,
	\begin{enumerate}[$(a)$]
		\item  $\mathcal{S}$ is a submanifold of the adjoint orbit $G\cdot H_\Theta$, with $G$ being the complex Lie group with compact real form $U$. Furthermore, 
		\begin{equation*}
			\mathcal{S} \cong U^* /U_\Theta,
		\end{equation*}
		where $U^* / U_\Theta$ is the dual symmetric space of $\mathbb{F}_\Theta$
		\item the submanifold $\mathcal{S}$ is diffeomorphic to a vector space
		\item  $\cal S$ is a Riemannian homogeneous space with a Riemannian structure defined by the Cartan--Killing form
		\item  any fiber in the cotangent bundle of $\mathbb{F}_\Theta$ intersecting the submanifold $\mathcal{S}$ intersects it \emph{transversally}.
	\end{enumerate}        
\end{ithm}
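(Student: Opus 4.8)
The plan is to recognize $\mathcal{S}$ as the orbit of the adjoint action of the dual group $U^*$ through $H_\Theta$ and then read off each of $(a)$--$(d)$ from the structure theory of the noncompact symmetric pair $(\mathfrak{u}^*,\mathfrak{u}_\Theta)$. First I would fix the relevant involutions of $\mathfrak{g}$: let $\theta=\sigma^c$ be the complex-linear involution whose $+1$ and $-1$ eigenspaces are $\mathfrak{z}_{\mathfrak{g}}(H_\Theta)=\mathfrak{u}_\Theta^{\mathbb{C}}$ and $\mathfrak{m}^{\mathbb{C}}$, and let $\tau$ be the conjugation of $\mathfrak{g}$ with respect to the compact real form $\mathfrak{u}$. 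A short eigenspace bookkeeping on $\mathfrak{u}_\Theta,\ \sqrt{-1}\mathfrak{u}_\Theta,\ \mathfrak{m},\ \sqrt{-1}\mathfrak{m}$ shows $\mathrm{Fix}(\tau\theta)=\mathfrak{u}_\Theta\oplus\sqrt{-1}\mathfrak{m}=\mathfrak{u}^*$, so $U^*$ is precisely the real form cut out by the conjugation $\tau\theta$ and the Cartan decomposition $U^*=U_\Theta\exp(\sqrt{-1}\mathfrak{m})$ holds, with the polar map $\sqrt{-1}\mathfrak{m}\times U_\Theta\to U^*$ a diffeomorphism. Throughout I use that $\mathbb{F}_\Theta$ is symmetric, which forces $\mathrm{ad}(H_\Theta)$ to define a three-term grading $\mathfrak{g}=\mathfrak{g}_{-c}\oplus\mathfrak{z}_{\mathfrak{g}}(H_\Theta)\oplus\mathfrak{g}_{c}$, with $\mathfrak{m}^{\mathbb{C}}=\mathfrak{g}_{c}\oplus\mathfrak{g}_{-c}$ and nilradical $\mathfrak{n}_\Theta=\mathfrak{g}_{-c}$.

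For $(a)$ I would compute the isotropy of the $U^*$-action at $H_\Theta$: its Lie algebra is $\mathfrak{u}^*\cap\mathfrak{z}_{\mathfrak{g}}(H_\Theta)$, and since $\mathrm{ad}(H_\Theta)$ is invertible on $\mathfrak{m}$ (its eigenvalues there are the nonzero $\lambda$ with $e^{\lambda}=-1$), one gets $\sqrt{-1}\mathfrak{m}\cap\mathfrak{z}_{\mathfrak{g}}(H_\Theta)=0$, so the isotropy subalgebra is exactly $\mathfrak{u}_\Theta$. Using the Cartan decomposition together with $\mathrm{Ad}(U_\Theta)H_\Theta=H_\Theta$ gives
\[
\mathrm{Ad}(U^*)H_\Theta=\mathrm{Ad}(\exp(\sqrt{-1}\mathfrak{m}))\,\mathrm{Ad}(U_\Theta)H_\Theta=\mathrm{Ad}(\exp(\sqrt{-1}\mathfrak{m}))H_\Theta=\mathcal{S},
\]
so $\mathcal{S}$ is the $U^*$-orbit and the orbit map descends to a bijection $U^*/U_\Theta\to\mathcal{S}$; embeddedness follows from properness of the polar map, identifying $\mathcal{S}$ with the dual symmetric space. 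Part $(b)$ is then immediate, since the polar coordinates give a diffeomorphism $\sqrt{-1}\mathfrak{m}\cong\mathcal{S}$ (equivalently $\mathcal{S}$ is of noncompact type, hence Cartan--Hadamard). For $(c)$, the Cartan--Killing form $B$ of $\mathfrak{g}$ restricts to an $\mathrm{Ad}(U_\Theta)$-invariant form on the isotropy space $\sqrt{-1}\mathfrak{m}$, and since $B$ is negative definite on the compact piece $\mathfrak{m}$ it is positive definite on $\sqrt{-1}\mathfrak{m}$; translating this inner product by $U^*$ yields the invariant Riemannian metric.

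The substantive point is $(d)$. I would describe the bundle projection by $\pi(\mathrm{Ad}(g)H_\Theta)=g\cdot b_0$, well defined because $Z_\Theta\subset P_\Theta$, so the fiber over the base point is $\mathrm{Ad}(N_\Theta)H_\Theta$ with tangent space $[\mathfrak{n}_\Theta,H_\Theta]=\mathfrak{n}_\Theta$ at $H_\Theta$, while $T_{H_\Theta}\mathcal{S}=\sqrt{-1}[\mathfrak{m},H_\Theta]=\sqrt{-1}\mathfrak{m}$ and $T_{H_\Theta}(\mathrm{Ad}(G)H_\Theta)=[\mathfrak{g},H_\Theta]=\mathfrak{m}\oplus\sqrt{-1}\mathfrak{m}$. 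Because the nonzero $\mathrm{ad}(H_\Theta)$-eigenvalues $\pm c$ are purely imaginary and $\tau(H_\Theta)=H_\Theta$, the conjugation $\tau$ interchanges $\mathfrak{g}_{c}$ and $\mathfrak{g}_{-c}$; this forces $\mathfrak{n}_\Theta\cap\sqrt{-1}\mathfrak{m}=0$, and the dimension count $\dim_{\mathbb{R}}\mathfrak{n}_\Theta=\dim_{\mathbb{R}}\mathfrak{m}=\tfrac12\dim_{\mathbb{R}}\mathrm{Ad}(G)H_\Theta$ then yields $T_{H_\Theta}\mathcal{S}\oplus T_{H_\Theta}(\text{fiber})=T_{H_\Theta}(\mathrm{Ad}(G)H_\Theta)$, i.e.\ transversality at $H_\Theta$. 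Finally, since $\pi$ is $G$-equivariant and $\mathcal{S}$ is $U^*$-invariant, for any intersection point $p=\mathrm{Ad}(\exp\sqrt{-1}Y)H_\Theta$ the automorphism $\mathrm{Ad}(\exp\sqrt{-1}Y)$ carries the transverse pair at $H_\Theta$ to the pair at $p$, giving transversality wherever $\mathcal{S}$ meets a fiber (consistent with the single-point intersection noted above).

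The main obstacle I anticipate is getting the vertical distribution of the fibration right: it is tempting to guess $T_{H_\Theta}(\text{fiber})=\sqrt{-1}\mathfrak{m}$, which would make $\mathcal{S}$ \emph{tangent} rather than transverse to the fiber; the correct vertical space is the nilradical direction $\mathfrak{n}_\Theta=\mathfrak{g}_{-c}$, and the whole argument hinges on the fact that $\tau$ exchanges the two $\mathrm{ad}(H_\Theta)$-eigenspaces so that $\sqrt{-1}\mathfrak{m}$ meets neither nilradical. A secondary technical care is the passage from immersed orbit to embedded submanifold in $(a)$, which I would handle through properness of the polar map rather than any abstract orbit-closure statement.
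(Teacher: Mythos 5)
Your proposal is correct and follows essentially the same route as the paper: identify $\mathcal{S}$ as the $\mathrm{Ad}(U^*)$-orbit of $H_\Theta$ via the Cartan decomposition $U^*=U_\Theta\exp(\sqrt{-1}\mathfrak{m})$, read off $(b)$ and $(c)$ from the diffeomorphism with $\sqrt{-1}\mathfrak{m}$ and the positive definiteness of the Killing form there, and prove $(d)$ from $\sqrt{-1}\mathfrak{m}\oplus\mathfrak{n}_\Theta^{+}=\mathfrak{m}_G$. The only cosmetic differences are that the paper cites Kobayashi--Nomizu for the symmetric-subspace/totally geodesic step in $(a)$ where you compute the isotropy directly, and it verifies transversality at a general point $e^{A}\cdot H_\Theta$ rather than at $H_\Theta$ followed by equivariant transport; both variants are sound.
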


Once this paper also deals with cotangent bundles, it is natural to define a symplectic form $$\omega (X, Y) = \mathrm{Im} \mathcal{K}(X,\sigma Y), \qquad X, Y\in \mathfrak{g},$$ where $\mathcal{K} (\cdot, \cdot )$ is the Cartan--Killing form and $\sigma$ is the conjugation in $\mathfrak{g}$ concerning the compact real form $\mathfrak{u}$. Since the adjoint orbit $\mathrm{Ad}(G)H_\Theta$ is contained in the Lie algebra $\mathfrak{g}$, the symplectic form  $\omega$ induces a symplectic form in the adjoint orbit through the pullback of $\omega$. The submanifold $\mathcal{S}$ and the flag symmetric space $\mathbb{F}_\Theta$ are Lagrangian submanifolds of $\mathrm{Ad}(G)H_\Theta$ with respect to $\omega$. However, the submanifold $\mathcal{S}$ is not a Lagrangian submanifold for the KKS form (Kostant-Kirillov-Souriau):

\begin{ithm}\label{thm:secondmainintro}
    	The symmetric flag space $\mathbb{F}_\Theta = U/U_\Theta$ and its dual symmetric space $\mathcal{S}$ are Lagrangian submanifolds of the adjoint orbit $G\cdot H_\Theta$ with the symplectic form $\omega$. However, $\cal S$ is not Lagrangian for the KKS form.
\end{ithm}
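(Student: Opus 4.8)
The plan is to reduce every assertion to linear algebra in $\mathfrak g$ at the base point $H_\Theta$ and, for the delicate case, at one off-base representative of $\mathcal S$, exploiting the symmetric decomposition $\mathfrak u=\mathfrak u_\Theta\oplus\mathfrak m$ with its bracket relations $[\mathfrak u_\Theta,\mathfrak m]\subset\mathfrak m$, $[\mathfrak m,\mathfrak m]\subset\mathfrak u_\Theta$, the $\mathcal K$-orthogonality $\mathcal K(\mathfrak u_\Theta,\mathfrak m)=0$, and the invertibility of $\operatorname{ad}(H_\Theta)$ on $\mathfrak m$ (so that $[\mathfrak m,H_\Theta]=\mathfrak m$). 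First I would record the three tangent spaces at $H_\Theta$: $T_{H_\Theta}\mathbb F_\Theta=[\mathfrak u,H_\Theta]=\mathfrak m\subset\mathfrak u$, $T_{H_\Theta}\mathcal S=[\sqrt{-1}\mathfrak m,H_\Theta]=\sqrt{-1}\mathfrak m\subset\sqrt{-1}\mathfrak u$, and $T_{H_\Theta}(G\cdot H_\Theta)=[\mathfrak g,H_\Theta]=\mathfrak m\oplus\sqrt{-1}\mathfrak m$. In particular $\dim_{\mathbb R}\mathbb F_\Theta=\dim_{\mathbb R}\mathcal S=\tfrac12\dim_{\mathbb R}(G\cdot H_\Theta)$, so in every case it suffices to establish \emph{isotropy}.

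For $\mathbb F_\Theta$ relative to $\omega$: on $\mathfrak u$ the conjugation $\sigma$ is the identity and $\mathcal K$ is real-valued, hence $\omega(X,Y)=\operatorname{Im}\mathcal K(X,Y)=0$ for $X,Y\in\mathfrak m$. Since $\operatorname{Ad}(u)$ preserves $\mathcal K$ and commutes with $\sigma$ for $u\in U$, the form $\omega$ is $\operatorname{Ad}(U)$-invariant; as $\mathbb F_\Theta=\operatorname{Ad}(U)H_\Theta$ is a single $U$-orbit, isotropy at $H_\Theta$ propagates to every point, so $\mathbb F_\Theta$ is Lagrangian. The delicate step is $\omega$-isotropy of $\mathcal S$, because $\omega$ is \emph{not} invariant under the non-compact dual $U^\ast$ and one cannot reduce to $H_\Theta$. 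Instead I would compute at an arbitrary $p=\operatorname{Ad}(g)H_\Theta$ with $g=\exp(\sqrt{-1}Z)$, $Z\in\mathfrak m$, where $T_p\mathcal S=\operatorname{Ad}(g)\,\sqrt{-1}\mathfrak m$. Using the equivariance $\sigma\circ\operatorname{Ad}(g)=\operatorname{Ad}(\sigma g)\circ\sigma$ with $\sigma(g)=\exp(-\sqrt{-1}Z)=g^{-1}$ and the $\operatorname{Ad}$-invariance of $\mathcal K$, the pairing of $\xi=\operatorname{Ad}(g)\sqrt{-1}v_1$ and $\eta=\operatorname{Ad}(g)\sqrt{-1}v_2$ ($v_1,v_2\in\mathfrak m$) collapses to
\[
\omega(\xi,\eta)=\operatorname{Im}\mathcal K\!\left(v_1,\operatorname{Ad}(g^{-2})v_2\right),\qquad \operatorname{Ad}(g^{-2})=e^{-2\sqrt{-1}\operatorname{ad}(Z)}.
\]

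I would then split $e^{-2\sqrt{-1}\operatorname{ad}(Z)}v_2$ into even and odd powers of $\operatorname{ad}(Z)$: the even part lies in $\mathfrak u$ and contributes a real number to $\mathcal K(v_1,\cdot)$, while the odd part lies in $\sqrt{-1}\mathfrak u$ and is precisely what $\operatorname{Im}$ detects. Since $Z,v_2\in\mathfrak m$ and $[\mathfrak m,\mathfrak m]\subset\mathfrak u_\Theta$, $[\mathfrak u_\Theta,\mathfrak m]\subset\mathfrak m$, every odd power $(\operatorname{ad}Z)^{2j+1}v_2$ lands in $\mathfrak u_\Theta$; as $v_1\in\mathfrak m$ and $\mathcal K(\mathfrak m,\mathfrak u_\Theta)=0$, the imaginary part vanishes. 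Hence $\omega|_{T_p\mathcal S}=0$ at every $p$ and $\mathcal S$ is Lagrangian. This parity argument, forced by the lack of $U^\ast$-invariance of $\omega$, is the main obstacle.

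Finally, for the KKS form I would identify the real KKS form with $\Omega_p([X,p],[Y,p])=\operatorname{Re}\mathcal K(p,[X,Y])$; its restriction to the compact orbit $\mathbb F_\Theta=\operatorname{Ad}(U)H_\Theta$ is the K\"ahler form $\mathcal K(H_\Theta,[X,Y])$, which fixes the convention. Evaluating on $T_{H_\Theta}\mathcal S$ with $\xi=[\sqrt{-1}W_1,H_\Theta]$, $\eta=[\sqrt{-1}W_2,H_\Theta]$ gives $\Omega(\xi,\eta)=\operatorname{Re}\mathcal K(H_\Theta,[\sqrt{-1}W_1,\sqrt{-1}W_2])=-\mathcal K(H_\Theta,[W_1,W_2])$. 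Choosing $W_2=[H_\Theta,W_1]$ and applying invariance, this equals $-\mathcal K([H_\Theta,W_1],[H_\Theta,W_1])>0$, because $[H_\Theta,W_1]\in\mathfrak m\subset\mathfrak u$ is nonzero (invertibility of $\operatorname{ad}(H_\Theta)$ on $\mathfrak m$) and $\mathcal K$ is negative-definite on $\mathfrak u$. Thus $\Omega$ does not vanish on $T_{H_\Theta}\mathcal S$; since $\mathcal S$ is half-dimensional, failure of isotropy means it is not Lagrangian for the KKS form. In contrast to the $\omega$-case, this last assertion is a one-line base-point evaluation once the real KKS form is pinned down as $\operatorname{Re}\mathcal K(p,[\cdot,\cdot])$.
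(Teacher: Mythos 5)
Your proof is correct. It follows the paper's overall scheme --- isotropy plus a half-dimension count for the $\omega$-statements, and an explicit nonvanishing evaluation for the KKS statement --- but the central step, the $\omega$-isotropy of $\mathcal{S}$, is executed by a genuinely different (though equivalent) route. The paper's proof is a two-line conceptual observation: $\mathcal{S}$ and every tangent space $T_p\mathcal{S}=[\mathfrak{u}^*,p]$ lie in the real form $\mathfrak{u}^*=\mathfrak{u}_\Theta+\sqrt{-1}\,\mathfrak{m}$, which is stable under the conjugation $\sigma$, and the Cartan--Killing form of $\mathfrak{g}$ restricted to a real form is real-valued, so $\mathrm{Im}\,\mathcal{K}(X,\sigma Y)=0$ there. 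Your parity argument --- expanding $\mathrm{Ad}(g^{-2})=e^{-2\sqrt{-1}\,\mathrm{ad}(Z)}$ and noting that odd powers of $\mathrm{ad}(Z)$ send $\mathfrak{m}$ into $\mathfrak{u}_\Theta$, which is $\mathcal{K}$-orthogonal to $\mathfrak{m}$ --- is a hands-on verification of the same fact: your even part lies in $\mathfrak{m}$ and your odd part in $\sqrt{-1}\,\mathfrak{u}_\Theta$, so $\mathrm{Ad}(g^{-2})v_2\in\mathfrak{m}+\sqrt{-1}\,\mathfrak{u}_\Theta$, which is exactly the $\sqrt{-1}$-multiple of $\mathfrak{u}^*$ that the paper's reduction to the dual real form encodes. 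The paper's version is shorter and makes the role of $\mathfrak{u}^*$ transparent; yours is self-contained and makes explicit why the failure of $\omega$ to be $U^*$-invariant (which blocks a naive reduction to the base point) does no harm. For the KKS part the two computations differ only in the choice of test vectors: you take $W_2=[H_\Theta,W_1]$ and conclude from negative-definiteness of $\mathcal{K}$ on $\mathfrak{u}$ together with the invertibility of $\mathrm{ad}(H_\Theta)$ on $\mathfrak{m}$, while the paper pairs the root vectors $\sqrt{-1}A_\alpha$ and $\sqrt{-1}Z_\alpha$ and obtains $2\sqrt{-1}\,\alpha(H_\Theta)\neq 0$; both are valid, and your version has the small bonus of showing that the KKS form restricted to $T_{H_\Theta}\mathcal{S}$ is in fact nondegenerate rather than merely nonzero.
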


As a last main contribution, in Section \ref{sec:curvature}, we obtained the following application regarding the existence of some vector bundles over simply connected manifolds that do not admit metrics of nonnegative sectional curvature. However, which corresponding sphere bundles do admit, answering in negative Problem 9 in \cite{Ziller_fatnessrevisited} for connection metrics:

\begin{ithm}\label{thm:ziller}
For any symmetric flag space $U/U_{\Theta}$, there is a vector bundle $U^*/U_{\Theta}\rightarrow \widetilde M \rightarrow U/U_{\Theta}$ that does not admit a connection metric of nonnegative sectional curvature whose base is considered with the $-\cal K$ metric, that is, the $\mathrm{Ad}(U_{\Theta})$ metric induced by the negative of the Cartan Killing--form on $\lie u$. Moreover, any sphere bundle obtained considering spheres on the fiber  $U^*/U_{\Theta}$ has nonnegative sectional curvature.
\end{ithm}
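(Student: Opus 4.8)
The plan is to prove the two assertions separately, since they point in opposite directions: the \emph{non-existence} of a nonnegatively curved connection metric on the total space $\widetilde M\cong T^*\bb F_\Theta=\mathrm{Ad}(G)H_\Theta$, and the \emph{existence} of one on the associated sphere bundle. The whole non-existence half rests on the defining feature of a connection metric $g=\pi^*(-\cal K)\oplus Q$ on the vector bundle $\pi\colon\widetilde M\to\bb F_\Theta$: the projection is a Riemannian submersion whose fibres are complete, flat and totally geodesic, each isometric to a fixed Euclidean space $(\bb R^d,Q)$, where $d=\dim_{\bb R}\bb F_\Theta$ is the rank of the fibre $\lie n_\Theta^+$. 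For horizontal lifts $X,Y$ of an orthonormal pair $\bar X,\bar Y\in T_b\bb F_\Theta$ and a fibre point $\xi$, the O'Neill formula reads
\begin{equation*}
\mathrm{sec}_{\widetilde M}(X,Y)=\mathrm{sec}_{\bb F_\Theta}(\bar X,\bar Y)-3\,|A_XY|^2,\qquad |A_XY|^2=\tfrac14\,|\Omega(\bar X,\bar Y)\xi|_Q^2,
\end{equation*}
where $\Omega$ is the curvature two-form of the chosen connection. Since the right-hand correction grows quadratically as $\xi\to\infty$ along the complete flat fibre, nonnegativity of $\mathrm{sec}_{\widetilde M}$ forces $\Omega(\bar X,\bar Y)=0$ for all $\bar X,\bar Y$ and all $b$; that is, the connection must be \emph{flat}.

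This is the crux of the first half, and it finishes quickly: because $\bb F_\Theta=U/U_\Theta$ is a (generalized) flag manifold it is simply connected, so any flat vector bundle over it is trivial; but $\widetilde M\cong T^*\bb F_\Theta\cong T\bb F_\Theta$ is nontrivial, its Euler number being $\chi(\bb F_\Theta)=|W/W_\Theta|>0$. This contradiction rules out every nonnegatively curved connection metric with base $(\bb F_\Theta,-\cal K)$.

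For the sphere bundle I would exhibit an explicit metric from the three natural ingredients: the $-\cal K$ metric on $\bb F_\Theta=U/U_\Theta$, the canonical (reductive) connection of the principal bundle $U\to U/U_\Theta$, and a round $\mathrm{Ad}(U_\Theta)$-invariant metric of small radius $\varepsilon$ on the fibre sphere $S(\lie m)$. The decisive input is the symmetric-space identity: with the $-\cal K$ metric one has $\mathrm{sec}_{\bb F_\Theta}(\bar X,\bar Y)=|[\bar X,\bar Y]|^2$ for orthonormal $\bar X,\bar Y\in\lie m$, while the curvature of the canonical connection is $\Omega(\bar X,\bar Y)=-\mathrm{ad}([\bar X,\bar Y])$. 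Hence both vanish \emph{simultaneously}, precisely on the flat two-planes coming from a maximal abelian subspace of $\lie m$, and moreover $|A_XY|^2\le\tfrac14 C^2|[\bar X,\bar Y]|^2\varepsilon^2$ for a constant $C$ bounding $\mathrm{ad}$ on the unit sphere. Thus the horizontal curvature is controlled uniformly,
\begin{equation*}
\mathrm{sec}_{\bb F_\Theta}(\bar X,\bar Y)-3|A_XY|^2\ \ge\ \bigl(1-\tfrac34 C^2\varepsilon^2\bigr)\,|[\bar X,\bar Y]|^2\ \ge\ 0
\end{equation*}
once $\varepsilon$ is small, regardless of whether the base plane is flat or positively curved. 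Together with the nonnegative vertizontal terms and the positive intrinsic curvature of the round fibre, this gives $\mathrm{sec}\ge0$ on the distinguished families of planes.

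The main obstacle is the final passage, in this second half, from nonnegativity on the horizontal, vertical and vertizontal planes to nonnegativity on \emph{all} two-planes of the sphere bundle. I would handle it by viewing the metric as a Cheeger-type deformation (shrinking the round fibre) and invoking the standard fact that a Riemannian submersion over a nonnegatively curved base with nonnegatively curved fibre and an $A$-tensor dominated by the base curvature — exactly the domination $\Omega\prec\mathrm{sec}_{\bb F_\Theta}$ established above — has nonnegatively curved total space; in the rank-one cases the sphere bundle is even a normal homogeneous space $U/L$ and the conclusion is immediate. A secondary, purely technical point in the first half is to justify the limit $\xi\to\infty$ and the identification of $A_XY$ with the connection curvature, both of which rely only on the completeness and flatness of the fibres built into the notion of a connection metric.
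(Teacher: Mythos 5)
Your non-existence argument addresses a different object from the one the theorem is about, and this is a genuine gap rather than a stylistic difference. The bundle in the statement has fibre $U^*/U_\Theta$, the \emph{dual symmetric space} of $\bb F_\Theta$, and the metric the paper puts on the fibre is the nonpositively curved symmetric metric coming from $\cal K$ restricted to $\sqrt{-1}\lie m$ --- not a flat Euclidean inner product. The paper's obstruction is the Tapp--Strake--Walschap necessary condition for such a submersion metric with totally geodesic fibres to be nonnegatively curved,
\[
R_{\ga}(X,Y,\sqrt{-1}Z,\sqrt{-1}W)^2\ \le\ 4\,\mathrm{sec}_{U/U_\Theta}(X,Y)\,\mathrm{sec}_{U^*/U_\Theta}(\sqrt{-1}Z,\sqrt{-1}W),
\]
and it fails for a reason your proposal never touches: the right-hand side is $\le 0$ because the fibre is nonpositively curved while the base is nonnegatively curved, and the duality between $U/U_\Theta$ and $U^*/U_\Theta$ forces the mixed curvature term on the left to be nonzero for suitable $X,Y,Z,W\in\lie m$. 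Your argument instead takes the fibre to be a fixed flat $(\bb R^d,Q)$ and lets $\xi\to\infty$ to force a flat connection. This has two problems. First, in the Strake--Walschap/Tapp/Ziller framework that the theorem is quoting (Problem 9 of \cite{Ziller_fatnessrevisited}), a connection metric on a vector bundle carries a complete \emph{rotationally symmetric} fibre metric $dr^2+f(r)^2\,d\sigma^2$; with bounded $f$ the quantity $|A_XY|^2$ stays bounded along the fibre and your chain ``nonnegative curvature $\Rightarrow$ flat connection $\Rightarrow$ trivial bundle'' collapses --- indeed it would rule out every nontrivial vector bundle over every compact simply connected base, which is false for connection metrics in the standard sense and, tellingly, never uses the symmetric-space hypothesis at all. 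Second, even granting your restrictive flat-fibre definition, you would have proved non-existence for a class of metrics that does not contain the metric $\ga$ with fibre $U^*/U_\Theta$ that the statement is actually about, so the theorem as written is not established.

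For the sphere bundle your small-round-fibre O'Neill estimate is in the right spirit, but the passage from the horizontal, vertical and vertizontal planes to \emph{all} two-planes is left as an acknowledged gap. The paper sidesteps this entirely: the sphere bundle is the associated bundle $U\times_{U_\Theta}S(\sqrt{-1}\lie m)$, a quotient of the nonnegatively curved product $(U,-\cal K)\times(S^{d-1},\mathrm{round})$ by a free isometric $U_\Theta$-action, so O'Neill applied to that quotient submersion gives nonnegative sectional curvature on all planes at once.
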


The exciting aspect of Theorem \ref{thm:ziller} is due a kind of converse to Cheeger--Gromoll's Soul Theorem: \emph{any complete
non-compact manifold $M$ with a metric of nonnegative sectional curvature is diffeomorphic to the normal bundle of some compact totally geodesic manifold} $\Sigma \subset M$, named as \emph{Soul} of $M$. That is, \emph{which vector
bundles over compact, nonnegatively curved base spaces can admit complete metrics
of nonnegative curvature?} Until now, it was not known an example of a vector bundle over a simply connected Riemannian manifold whose associated sphere bundle admits a metric of nonnegative sectional curvature, although the corresponding vector bundle does not. The description here of $\cal S$ as the dual of a symmetric space of compact type allows us to construct the first example. However, as in \cite{GONZALEZALVARO201753}, or \cite{10.4310/jdg/1632506394}, up some connecting sums, these vector bundles admit metrics of nonnegative sectional curvature.

\section{Lie theoretical description of flag manifolds and symmetric spaces}

\subsection{The basics of root systems and parabolic Lie algebras}
\label{sec:roots}

Let us briefly review some fundamental facts about Lie algebras and establish the notation that will be consistently used throughout this paper. Let $\mathfrak{g}$ be a complex semisimple Lie algebra and $\mathfrak{h}$ a Cartan sub-algebra with \emph{roots set} denoted by $\Pi$. We recall that since $\mathbb C$ is algebraically closed and of zero characteristic, using that $\lie g$ is finite-dimensional, it follows that $\lie h$ is Abelian. Moreover, the adjoint representation $\mathrm{ad}: \lie g \rightarrow \lie{gl}(\lie g)$ enjoys the property that the image $\mathrm{ad}(\lie h)$ consists of \emph{semisimple operators}. Hence, all such operators are simultaneously diagonalizable. Considering this, we thus decompose $\lie g$ via appropriate invariant subspaces
\begin{equation}
    \lie g = \bigoplus_{\lambda \in \lie h^*}\lie g_{\lambda},
\end{equation}
where
\begin{equation}
    \lie g_{\lambda} := \ker \left(\mathrm{ad}(h) - \lambda(h) 1 : \lie g \rightarrow \lie g\right), 
\end{equation}
for $h \in \lie h$. Hence, the roots set is nothing but $\Pi := \left\{\alpha \in \lie h^{*}\setminus \{0\} : \lie g_{\alpha} \not\equiv 0  \right\}$. We reinforce that it can also be proved that for each $\alpha \in \Pi$, it holds that $\mathrm{dim}_{\mathbb C}\lie g_{\alpha} = 1$.

We can extract from $\Pi$ a subset $\Pi^{+}$ completely characterized by both:
\begin{enumerate}[(i)]
    \item for each root $\alpha\in \Pi$ only one of $\pm \alpha$ belongs to $\Pi^{+}$
    \item for each $\alpha, \beta \in \Pi^{+}$ necessarily $\alpha + \beta \in \Pi^{+}$ if $\alpha + \beta \in \Pi$.
\end{enumerate}
To the set $\Pi^{+}$, we name \emph{subset of positive roots}. We say that the subset $\Sigma \subset \Pi^{+}$ consists of the \emph{simple roots system} if it collects the positive roots, which can not be written as a combination of two elements in $\Pi^{+}$.

Since $\lie h$ is an abelian Lie sub-algebra, we can pick a basis $\{ H_\alpha : \alpha \in \Sigma\}$ to $\lie h$ and complete it with $\{ X_\alpha \in \mathfrak{g}_\alpha: \alpha \in \Pi \}$, generating what is called a \textit{Weyl basis} of $\mathfrak{g}$: for any $\alpha, \beta \in \Pi$
\begin{enumerate}
    \item $\cal K(X_{\alpha},X_{-\alpha}) := \mathrm{tr}~(\mathrm{ad}(X_{\alpha}\circ \mathrm{ad}(X_{-\alpha})) = 1$
    \item $[X_{\alpha},X_{\beta}] = m_{\alpha,\beta}X_{\alpha+\beta},~m_{\alpha,\beta}\in \mathbb{R}$,
\end{enumerate}
where $\cal K$ is the \emph{Cartan--Killing} form of $\lie g$, see \cite[p. 214]{San_Martin_2021}.

Then $\mathfrak{g}$ decomposes into root subspaces:
\begin{equation*}
	\mathfrak{g}=\mathfrak{h} + \sum_{\alpha \in \Pi} \mathbb{C} X_\alpha.
\end{equation*}
It is straightforward from \cite[Theorem 11.13, p. 224]{San_Martin_2021} that such a decomposition implies that the compact real form of $\mathfrak{g}$ is given by
\begin{equation}\label{eq:compactrealform}
	\mathfrak{u}=\sqrt{-1}\mathfrak{h}_{\mathbb{R}} + \sum_{\alpha \in \Pi^+} \mathrm{Span}_\mathbb{R} (X_\alpha + X_{-\alpha}, \sqrt{-1}(X_\alpha - X_{-\alpha}))
\end{equation}

Now recall that for any subset $Q\subset \Pi$, we have the decomposition $Q=Q^s \cup Q^a$, where $Q^s := Q \cap (-Q)$, and $Q^a := Q\setminus Q^s$. In what follows we will always denote by $\mathfrak{g}(Q)$ the algebra generated by $Q$
\begin{equation}\label{eq:generating}
	\mathfrak{g}(Q):=  \langle [X_\alpha, X_{-\alpha} ]=H_\alpha , \alpha \in Q^s \rangle + \sum_{\alpha \in Q} \mathbb{C} X_\alpha.
\end{equation} 

Once we have recalled several aspects of the roots system, let us explain how a system of roots, via a characteristic element, can provide a helpful description of parabolic sub-algebras.

Fix a subset $\Theta = \{ \alpha _1 , \cdots , \alpha_m \} \subset \Sigma$ of simple roots. We shall denote by $\langle \Theta\rangle$ the set of roots generated by $\Theta$. Also, we denote the intersection $\langle \Theta \rangle \cap \Pi ^{\pm}$ by $\langle \Theta \rangle ^{\pm}$. For each subset $\Theta $ we get the decomposition of $\mathfrak{g}$:
\begin{equation}	\label{ggd}
	\mathfrak{g} = \mathfrak{n}^-_\Theta \oplus \mathfrak{z}_\Theta \oplus \mathfrak{n}_\Theta^+,
\end{equation}
where
\begin{equation}\label{ntmes}
	\mathfrak{n}^-_\Theta = \mathfrak{g} (\Pi^- \setminus \langle \Theta \rangle ^-), \,\,\,\, \mathfrak{n}^+_\Theta = \mathfrak{g} (\Pi^+ \setminus \langle \Theta \rangle ^+)
\end{equation}
are nilpotent sub-algebras of $\mathfrak{g}$, and $\mathfrak{z}_\Theta = \mathfrak{h}+\mathfrak{g}(\Theta)$
is a reductive sub-algebra, with 
\begin{equation*}
	\mathfrak{g} (\Theta) := \mathfrak{g} ( \langle \Theta \rangle ) = \langle H_\alpha \rangle _{ \alpha \in \langle \Theta \rangle } + \sum_{\alpha \in \langle \Theta \rangle} \mathbb{C} X_\alpha.
\end{equation*}
We also have $\mathfrak{z}_\Theta=\mathfrak{z}_{\mathfrak{g}} (H_\Theta)$.

 Now one can construct a parabolic sub-algebra from a subset $\Theta \subset \Sigma$ of simple roots of $\mathfrak{g}$: 
\begin{equation}
	\mathfrak{p}_\Theta= \mathfrak{h} + \mathfrak{g}(\langle \Theta \rangle ^- \cup \Pi ^+ ) = \mathfrak{z}_\Theta + \mathfrak{n}_\Theta^+ = \sum_{\alpha \in  \langle \Theta \rangle ^-} \mathfrak{g}_\alpha + \mathfrak{b},
\end{equation}
where $\lie b$ is a Borel sub-algebra.

We also recall that a parabolic subgroup $P_\Theta$, with Lie algebra ${\lie p}_{\theta}$, is the normalizer of $\mathfrak{p}_\Theta$ in $G$, i.e., $P_\Theta =\{ g\in G : \mathrm{Ad} (g) \mathfrak{p}_\Theta = \mathfrak{p}_\Theta \}$

\begin{definition}\label{def:flag}
	A \emph{flag manifold} of a complex semisimple Lie group $G$ is the homogeneous space $M=G/P_\Theta$, where $P_\Theta$ is a parabolic subgroup of $G$.
\end{definition}

If $U$ corresponds to the compact real form of $G$ then it can be shown that $\bb F_{\Theta} = U/U_{\Theta}$ where $U_{\Theta} = U\cap P_{\Theta}$. 

Given $\Theta \subset \Sigma$, an element $H_{\Theta} \in \mathfrak{h}$ is said to be \textit{characteristic} for $\Theta$ if one recognizes $\Theta = \{ \alpha \in \Sigma : \alpha (H_\Theta) =0 \}$. In particular, such a subset $\Theta $ defines a parabolic sub-algebra $\mathfrak{p}_\Theta$, that is, it contains a maximal solvable sub-algebra of $\lie g$, i.e., a Borel sub-algebra, with parabolic subgroup $P_\Theta$. It also defines a flag manifold $\mathbb{F}_\Theta = G/P_\Theta$. It can be checked that the parabolic sub-algebra is $\mathfrak{p}_\Theta = \oplus _{\lambda \geq 0} \mathfrak{g}_\lambda$, where $\lambda$ runs through the non-negative eigenvalues of $\mathrm{Ad}(H_\Theta)$. Conversely, starting with $H_0 \in \mathfrak{h}$ we define $\Theta _{H_0} = \{ \alpha \in \Sigma : \alpha (H_0)=0 \}$, leading to the corresponding flag manifold $\mathbb{F}_{H_0} := \mathbb{F}_{\Theta_{H_0}}$.

\subsection{Flag Symmetric Spaces and their algebraic aspects}

\label{sec:simalgebras}
This section will explore the Lie algebraic description of symmetric spaces, such as given in \cite[Chapter XI]{KN}, jointly with their algebraic descriptions of parabolic sub-algebras, to study symmetric spaces that are also flag manifolds. More precisely, we call \emph{symmetric flag spaces} to symmetric spaces that are also flag manifolds. 

We recall that a \textit{symmetric space} is a triple $(G, H,\sigma)$ consisting of a connected Lie group $G$, a closed subgroup $H$ of $G$, and an involutive automorphism $\sigma$ of $G$ such that $H$ lies between $G_\sigma$ and the identity component of $G_\sigma$, where $G_\sigma$ denotes the closed subgroup of $G$ consisting of all elements left fixed by $\sigma$. An infinitesimal version of a symmetric space, named as a \textit{symmetric Lie algebra}, is a triple $(\mathfrak{g}, \mathfrak{h}, \sigma )$ consisting of a Lie algebra $\mathfrak{g}$, a sub-algebra $\mathfrak{h}$ of $\mathfrak{g}$, and an involutive automorphism $\sigma$ of $\mathfrak{g}$ such that $\mathfrak{h}$ consists of all elements of $\mathfrak{g}$ which are left fixed by $\sigma$. 

Every symmetric space $(G,H,\sigma)$ gives rise to a symmetric Lie algebra $(\mathfrak{g}, \mathfrak{h}, \sigma)$ in a natural manner; $\mathfrak{g}$ and $\mathfrak{h}$ are the Lie algebras of $G$ and $H$, respectively, and the automorphism $\sigma$ of $\mathfrak{g}$ is the one induced by the automorphism $\sigma$ of $G$. Conversely, if $(\mathfrak{g}, \mathfrak{h}, \sigma )$ is a symmetric Lie algebra and $G$ is a simply connected Lie group with Lie algebra $\mathfrak{g}$, the automorphism $\sigma$ of $\mathfrak{g}$ induces an automorphism $\sigma$ of $G$. Moreover, for any subgroup $H$ lying between $G_\sigma$ and the identity component of $G_\sigma$, the triple is a symmetric space, and $(G, H)$ is a symmetric pair.

Let $(\mathfrak{g}, \mathfrak{h}, \sigma )$ be a symmetric Lie algebra. Since $\sigma$ is involutive, seeing it as a linear operator, one gets that its eigenvalues are $1$ and $-1$. One then recovers $\mathfrak{h}$ as the eigenspace associated with $1$. Let $\mathfrak{m}$ be the eigenspace associated with $-1$. The decomposition 
\begin{equation}
	\mathfrak{g} =  \mathfrak{h} + \mathfrak{m}
\end{equation} is called the \emph{canonical decomposition} of $( \mathfrak{g} , \mathfrak{h} ,\sigma) $.

Since the following shall play some role, it is worth recalling that
\begin{proposition}		\label{colchetes}
	\cite[Proposition 2.1, p. 226]{KN} If $\mathfrak{g} = \mathfrak{h} + \mathfrak{m}$ is the canonical decomposition of a symmetric Lie algebra $(\mathfrak{g}, \mathfrak{h}, \sigma)$, then 
	\begin{equation}
		[\mathfrak{h} , \mathfrak{h}] \subset \mathfrak{h} , \quad [ \mathfrak{h} , \mathfrak{m} ] \subset \mathfrak{m}, \quad [ \mathfrak{m} , \mathfrak{m} ] \subset \mathfrak{h}.
	\end{equation}
\end{proposition}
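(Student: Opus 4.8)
The plan is to prove Proposition \ref{colchetes} by exploiting that $\sigma$ is an algebra automorphism together with the characterization of $\mathfrak{h}$ and $\mathfrak{m}$ as the $(+1)$- and $(-1)$-eigenspaces of $\sigma$. The single key fact I would use repeatedly is that for an automorphism $\sigma$ of $\mathfrak{g}$ one has
\begin{equation}\label{eq:autobracket}
	\sigma([X,Y]) = [\sigma X, \sigma Y], \qquad X,Y \in \mathfrak{g}.
\end{equation}
Recall from the canonical decomposition that $X \in \mathfrak{h}$ iff $\sigma X = X$, and $X \in \mathfrak{m}$ iff $\sigma X = -X$. The strategy is then purely to compute the $\sigma$-eigenvalue of each bracket and read off which eigenspace it lands in.

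First I would treat $[\mathfrak{h},\mathfrak{h}]$: for $X, Y \in \mathfrak{h}$, apply \eqref{eq:autobracket} to get $\sigma([X,Y]) = [\sigma X, \sigma Y] = [X,Y]$, so $[X,Y]$ is fixed by $\sigma$ and hence lies in $\mathfrak{h}$; this also confirms $\mathfrak{h}$ is a subalgebra, consistent with the hypotheses. Next, for $X \in \mathfrak{h}$ and $Y \in \mathfrak{m}$, I would compute $\sigma([X,Y]) = [\sigma X, \sigma Y] = [X, -Y] = -[X,Y]$ using bilinearity of the bracket, so $[X,Y]$ is a $(-1)$-eigenvector and therefore belongs to $\mathfrak{m}$. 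Finally, for $X, Y \in \mathfrak{m}$, the same substitution yields $\sigma([X,Y]) = [-X,-Y] = [X,Y]$, placing $[X,Y]$ in $\mathfrak{h}$. Each of the three inclusions thus follows from one application of \eqref{eq:autobracket} and the sign bookkeeping.

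There is no genuine obstacle here; the only point deserving care is the logical direction in the definition of $\mathfrak{h}$ and $\mathfrak{m}$. I would make sure to invoke the eigenspace description already established in the text (that $\sigma$ has eigenvalues $\pm 1$, with $\mathfrak{h}$ the $(+1)$-eigenspace and $\mathfrak{m}$ the $(-1)$-eigenspace, so that $\mathfrak{g} = \mathfrak{h}\oplus\mathfrak{m}$), rather than re-deriving it. Since each computed bracket is shown to be a pure eigenvector of $\sigma$, it automatically lies in the corresponding eigenspace, which is exactly the claimed containment. The entire argument is a short finite case check, so I would present it compactly as the three displayed eigenvalue computations followed by the immediate conclusion.
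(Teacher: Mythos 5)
Your proof is correct and is exactly the standard argument: the paper does not reproduce a proof of this proposition (it simply cites \cite[Proposition 2.1, p.~226]{KN}), and the eigenvalue computation via $\sigma([X,Y])=[\sigma X,\sigma Y]$ is precisely the argument given in that reference. Nothing is missing.
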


\begin{proposition}	\label {dgrosym}\cite[Proposition 2.2, p. 227]{KN}
	Let $(G,H, \sigma)$ be a symmetric space and $(\mathfrak{g},\mathfrak{h},\sigma)$ its symmetric Lie algebra. If $\mathfrak{g} = \mathfrak{h} + \mathfrak{m}$ is the canonical decomposition of $(\mathfrak{g} , \mathfrak{h},\sigma)$, then \begin{equation}
		\mathrm{Ad}(H)\mathfrak{m} \subset \mathfrak{m}.
	\end{equation}
\end{proposition}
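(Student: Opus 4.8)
The plan is to exploit the compatibility between the automorphism $\sigma$ of $G$ and the adjoint representation, reducing the assertion to the two defining features of the situation: that $\sigma$ fixes every element of $H$ and that it acts as $-1$ on $\mathfrak{m}$.

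First I would establish the naturality of $\mathrm{Ad}$ under automorphisms. Writing $C_g \colon G \to G$ for the conjugation $C_g(x) = gxg^{-1}$, the fact that $\sigma$ is a group homomorphism gives $\sigma \circ C_g = C_{\sigma(g)} \circ \sigma$ for every $g \in G$, since $\sigma(gxg^{-1}) = \sigma(g)\sigma(x)\sigma(g)^{-1}$. Differentiating both sides at the identity and using $\mathrm{Ad}(g) = d(C_g)_e$, together with the convention that $d\sigma_e$ is again denoted $\sigma$, I obtain
\begin{equation*}
\sigma \circ \mathrm{Ad}(g) = \mathrm{Ad}(\sigma(g)) \circ \sigma, \qquad g \in G,
\end{equation*}
as linear maps on $\mathfrak{g}$.

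Next I would specialize to $h \in H$. Since $(G,H,\sigma)$ is a symmetric space we have $H \subset G_\sigma$, hence $\sigma(h) = h$, and the naturality relation collapses to $\sigma \circ \mathrm{Ad}(h) = \mathrm{Ad}(h) \circ \sigma$; that is, $\mathrm{Ad}(h)$ commutes with the involution $\sigma$ on $\mathfrak{g}$. Finally, recalling that $\mathfrak{m}$ is precisely the $(-1)$-eigenspace of $\sigma$, I take any $X \in \mathfrak{m}$ and compute
\begin{equation*}
\sigma(\mathrm{Ad}(h)X) = \mathrm{Ad}(h)(\sigma X) = \mathrm{Ad}(h)(-X) = -\,\mathrm{Ad}(h)X,
\end{equation*}
so that $\mathrm{Ad}(h)X$ lies in the $(-1)$-eigenspace, i.e. $\mathrm{Ad}(h)X \in \mathfrak{m}$. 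As $h \in H$ and $X \in \mathfrak{m}$ were arbitrary, this yields $\mathrm{Ad}(H)\mathfrak{m} \subset \mathfrak{m}$.

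There is no serious obstacle here; the only point demanding care is the naturality identity of the first step, where one must keep track of the distinction between $\sigma$ as an automorphism of $G$ and its differential on $\mathfrak{g}$, and verify that the square relating conjugation, $\sigma$, and $\mathrm{Ad}$ genuinely commutes. Once that equivariance is in hand, the eigenspace computation is immediate and uses only that the elements of $H$ are $\sigma$-fixed, which is guaranteed by the inclusion $H \subset G_\sigma$ built into the definition of a symmetric space.
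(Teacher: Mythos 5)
Your proof is correct and is essentially the standard argument behind the cited result from Kobayashi--Nomizu (the paper itself only quotes the proposition without reproving it): the equivariance $\sigma\circ\mathrm{Ad}(g)=\mathrm{Ad}(\sigma(g))\circ\sigma$, the fact that $H\subset G_\sigma$ forces $\mathrm{Ad}(h)$ to commute with $\sigma$, and the identification of $\mathfrak{m}$ as the $(-1)$-eigenspace of $\sigma$ together give the inclusion immediately. Nothing is missing.
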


Let $\bb F_{\Theta}$ be a flag manifold, as defined in Definition \ref{def:flag}. It is remarkable, considering the discussions on Sections \ref{sec:roots} and \ref{sec:simalgebras}, that for flag-symmetric spaces, there exists an involutive inner automorphism $\sigma$ over $\mathfrak{g}$ (unless explicitly said, we use the same notation for the automorphism over $G$, $\mathfrak{u}$ and $U$) such that the $\sigma$-fixed point set in $\mathfrak{g}$ is
\begin{equation}
	\mathfrak{g}^\sigma = \mathfrak{z}_\Theta = \mathfrak{z}_\mathfrak{g} (H_\Theta),
\end{equation}
where we do recall that $\lie z_{\lie g}(H_{\theta})$ is obtained by the description given by equation \eqref{eq:generating}. Then, the tangent space of the homogeneous space $G/G^\sigma$ at the origin can be identified with
\begin{equation*}
	\mathfrak{m}_G \cong \mathfrak{n}_\Theta^- \oplus \mathfrak{n}_\Theta^+,
\end{equation*}
(if needed, recall the definitions of both $\mathfrak{n}_\Theta^{\pm}$ on equation \eqref{ntmes}).

Hence, in this context, the canonical decomposition (concerning Proposition \ref{colchetes}) of the complex Lie algebra $\lie g$ is given by
\begin{equation}\label{eq:decompoeaqui}
	\mathfrak{g}= \mathfrak{z}_\Theta \oplus \mathfrak{m}_G.
\end{equation}

\section{The Cotangent bundle of a symmetric flag space}

As mentioned in the Introduction, in \cite{ADORB}, the adjoint orbit $G\cdot H_\Theta :=\mathrm{Ad}(G)H_\Theta$ realizes the cotangent bundle of the complex flag manifold $\mathbb{F}_\Theta$.
\begin{theorem}	\label{orbitafibradoSM}
	\cite[Theorem 2.1]{ADORB} The adjoint orbit $\mathcal{O}(H_\Theta) =\mathrm{Ad}(G)\cdot H_\Theta \approx G/Z_\Theta$ of the characteristic element $H_\Theta$ is a $C^\infty$ vector bundle over $\mathbb{F}_\Theta$ that is isomorphic to the cotangent bundle $T^*\mathbb{F}_\Theta$. Moreover, we can write down a diffeomorphism $\iota :\mathrm{Ad}(G)\cdot H_\Theta \rightarrow T^*\mathbb{F}_\Theta$ such that
	\begin{enumerate}[$(a)$]
		\item $\iota$ is equivariant with respect to the actions of $U$, that is, for all $u\in U$,
		\begin{equation*}
			\iota \circ \mathrm{Ad} (u) = \tilde{u} \circ \iota,
		\end{equation*} 
		where $\tilde{u}$ is the lifting to $T^* \mathbb{F}_\Theta$ (via the differential) of the action of $u$ on $\mathbb{F}_\Theta$; and
		\item \label{item2lino} the pullback of the canonical simplectic form on $T^*\mathbb{F}_\Theta$ by $\iota$ is the (real) Kostant-Kirillov-Souriau form on the orbit.
	\end{enumerate}
\end{theorem}

On the one hand, the realization of the cotangent bundle is generally by the coadjoint orbit. However, we claim that such an orbit can be identified with the adjoint orbit, also realized by the homogeneous space $G/Z_\Theta$, where $Z_\Theta$ is the centralizer of $H_\Theta$ in $G$ by the adjoint representation. The precise reason for this \emph{exchangeability} on the possible representations is because \emph{for semisimple Lie algebras, the adjoint representation behaves like the coadjoint representation}.

Indeed, we recall that the \emph{coadjoint representation} $\mathrm{Ad}^*$ of $G$ on the dual $\mathfrak{g}^*$ of the Lie algebra $\mathfrak{g}$ is defined by $\mathrm{Ad}^*(g)\alpha =\alpha \circ \mathrm{Ad}(g^{-1}),~ g\in G$ and $\alpha\in \mathfrak{g}^*$. Moreover, its infinitesimal representation $\mathrm{ad}^* : \mathfrak{g} \rightarrow \mathfrak{gl}(\mathfrak{g})$ is given by $\mathrm{ad}^*(X)\alpha =-\alpha \circ \mathrm{Ad}(X)$. Hence, $G$ acts on $\mathfrak{g}^*$ by the representation $\mathrm{Ad}^*$, being the induced action vector fields $\tilde{X}$, $X\in \mathfrak{g}$, given by $\tilde{X}= \mathrm{ad}^* (X)$.

For $\alpha \in \mathfrak{g}^*$, the coadjoint orbit $\mathrm{Ad}^*(G)\alpha$ is identified with the homogeneous space $G/Z_\alpha$, where $Z_\alpha$ is the closed subgroup 
\begin{equation*}
	Z_\alpha = \{ g\in G :\alpha \circ \mathrm{Ad}(g^{-1}) = \alpha \}. 
\end{equation*}
The Lie algebra $\mathfrak{z}_\alpha$ of $Z_\alpha$ is given by $\mathfrak{z}_\alpha = \{ X\in \mathfrak{g} : \alpha \circ \mathrm{ad}(X)=0 \}$. Moreover, the tangent space of $\mathrm{Ad}^*(G) \alpha$ at $\alpha$ is given by
\begin{equation*}
	T_\alpha ( \mathrm{Ad} ^*(G) \alpha ) = \{ \mathrm{ad}^*(X)\alpha : X\in \mathfrak{g} \}.
\end{equation*}

	Following \cite[p. 324]{San_Martin_2021}, the Cartan--Killing form $\mathcal{K}(\cdot , \cdot)$ of a Lie algebra $\mathfrak{g}$ of the semisimple group $G$ is non-degenerate and define an isomorphism $W: \mathfrak{g} \rightarrow \mathfrak{g}^*$ by $W(X)(\cdot) = \mathcal{K} (X,\cdot)$. This isomorphism exchanges the adjoint and coadjoint representations, i.e., \[W\mathrm{Ad}(g)=\mathrm{Ad}^*(g)W\] for every $g\in G$, since $\mathcal{K}$ is $\mathrm{Ad}$-invariant. Hence, $W$ applies diffeomorphically the adjoint orbits in the coadjoint orbits.

Concerning Theorem  \ref{orbitafibradoSM}, the projection $\pi : G\cdot H_\Theta \rightarrow \mathbb{F}_\Theta$ is obtained via the action of $G$: the canonical fibration $gZ_\Theta \in G/Z_\Theta \mapsto gP_\Theta \in \mathbb{F}_\Theta$ has as fiber $P_\Theta /Z_\Theta$. In this manner, in terms of the adjoint representation, the fiber is $\mathrm{Ad}(P_\Theta)\cdot H_\Theta$, which coincides with the affine subspace $H_\Theta + \mathfrak{n}_\Theta^+$ (see \cite[p. 364]{ADORB}).

Moreover, in \cite[Section 2.2, p. 365]{ADORB} it is also described the isomorphism of the adjoint orbit $G\cdot H_\Theta$ with the cotangent bundle $T^*\mathbb{F}_\Theta$. The tangent space $T_{b_\Theta}\mathbb{F}_\Theta$ of the flag manifold $\mathbb{F}_\Theta$ at the origin $b_\Theta$ can be identified with $\mathfrak{n}_\Theta^-$ and the isotropy representation $U_\Theta \rightarrow $Gl$(T_{b_\Theta}\mathbb{F}_\Theta)$ becomes the restriction of the adjoint representation. Hence, the subspace $\mathfrak{n}^+_\Theta$ is isomorphic to the dual $(\mathfrak{n}_\Theta^-)^*$ of $\mathfrak{n}_\Theta^-$ via the Cartan--Killing form $\mathcal{K}(\cdot , \cdot)$ of $\mathfrak{g}$, being the map $X\in \mathfrak{n}_\Theta^+ \mapsto \mathcal{K}(X,\cdot ) \in (\mathfrak{n}_\Theta^-)^*$ an isomorphism.

\subsection{Characterization of some symmetric spaces and flag manifolds}

We begin recalling that, according to \cite[Chapter XI.7]{KN}, the \emph{orthogonal symmetric Lie algebras} are of compact and non-compact type. Moreover, they are of four classes (these former concepts shall be approached in further detail in Section \ref{sec:curvature}). The types of symmetric spaces are related to their geometric properties as follows
\begin{theorem} 
	\cite[Theorem 8.6, p. 256]{KN} Let $(G,H,\sigma)$ be a symmetric space with $\mathrm{ad}_{\lie g}(\lie h)$ compact and let $(\mathfrak{g},\mathfrak{h},\sigma)$ be its orthogonal symmetric Lie algebra. Take any $G$-invariant Riemannian metric on $G/H$. Then, 
	\begin{itemize}
		\item[(1)] if $(\mathfrak{g},\mathfrak{h},\sigma)$ is of compact type, then $G/H$ is a compact Riemannian symmetric space with nonnegative sectional curvature and positive definite Ricci tensor
		\item[(2)] if $(\mathfrak{g},\mathfrak{h},\sigma)$ is of non-compact type, then $G/H$ is a simply connected non-compact Riemannian symmetric space with non-positive sectional curvature and negative definite Ricci tensor and is diffeomorphic to a Euclidean space.
	\end{itemize}
\end{theorem}

On the one hand, on \cite[Table V, p.518]{HELGA} we can find the following table of irreducible Riemannian globally symmetric classical spaces of type I and III, with $p+q=n$:

\begin{center} \label{Table1}
	\begin{tabular}{ | m{2em} | m{5.3cm}| m{5.3cm} |} 
		\hline
		& Compact symmetric space & Noncompact symmetric space \\  	
		\hline
		AI & $\mathrm{SU}(n)/\mathrm{SO}(n)$ & $\mathrm{Sl}(n,\mathbb{R})/\mathrm{SO}(n)$ \\ 
		\hline
		AII & $\mathrm{SU}(2n)/\mathrm{Sp}(n)$ & $\mathrm{SU}^* (2n)/\mathrm{Sp}(n)$  \\ 
		\hline
		AIII & $\mathrm{SU}(p+q)/\mathrm{S}(\mathrm{U}(p)\times \mathrm{U}(q))$ & $\mathrm{SU}(p,q)/\mathrm{S}(\mathrm{U}(p) \times \mathrm{U}(q))$ \\ 
		\hline
		BDI & $\mathrm{SO}(p+q)/\mathrm{SO}(p)\times \mathrm{SO}(q)$ & $\mathrm{SO}_o (p,q)/\mathrm{SO}(p)\times \mathrm{SO}(q)$ \\ 
		\hline
		DIII & $\mathrm{SO}(2n)/\mathrm{U}(n)$ & $\mathrm{SO}^*(2n)/\mathrm{U}(n)$ \\ 
		\hline
		CI & $\mathrm{Sp}(n)/\mathrm{U}(n)$ & $\mathrm{Sp}(n,\mathbb{R})/\mathrm{U}(n)$ \\ 
		\hline
		CII & $\mathrm{Sp}(p+q)/\mathrm{Sp}(p)\times \mathrm{Sp}(q)$ & $\mathrm{Sp}(p,q)/\mathrm{Sp}(p)\times \mathrm{Sp}(q)$ \\ 
		\hline
	\end{tabular}
\end{center}

On the other hand, an essential class of symmetric spaces generalizing the Riemannian symmetric spaces is the one of \emph{pseudo-Riemannian symmetric spaces}, in which an indefinite Riemannian metric replaces the Riemannian metric, leading, for instance, to Lorentzian symmetric spaces. Generally, symmetric and locally symmetric spaces can be seen as affine symmetric spaces. As shall be further approached in Section \ref{sec:curvature}, if $G/H$ is a symmetric space, we can provide some $G$-invariant natural connection associated with $G/H$. Conversely, a manifold with such a connection is locally symmetric, and it is a symmetric space up to passing to its universal covering.

Such manifolds can also be described as those affine manifolds whose \emph{geodesic symmetries} are all globally defined affine diffeomorphisms, generalizing the Riemannian and pseudo-Riemannian case. In \cite[Tableau I, p. 114]{BERGER} we find the following table of symmetric spaces for each \emph{classical} complex simple Lie group:
\begin{center} \label{Table2}
	\begin{tabular}{ | m{4em} | m{4.4cm}| m{4.8cm} | m{1.8cm} | } 
		\hline
		G &  & & \\ 	
		\hline
				$\mathrm{Sl}(n,\mathbb{C})$ & $G/\mathrm{SO}(n,\mathbb{C})$ & $G/\left(\mathrm{S}(\mathrm{Gl}(k,\mathbb{C})\times \mathrm{Gl}(l,\mathbb{C}))\right)$, $k+l=n$ & $G/\mathrm{Sp}(n,\mathbb{C})$ \\ 
		\hline
		$\mathrm{SO}(n,\mathbb{C})$ & $G/\left(\mathrm{SO}(k,\mathbb{C})\times \mathrm{SO}(l,\mathbb{C})\right)$, $k+l=n$ & $G/\mathrm{Gl}(n/2, \mathbb{C})$, n even &  \\ 
		\hline
		$\mathrm{Sp}(n,\mathbb{C})$ & $G/\left(\mathrm{Sp}(k,\mathbb{C})\times \mathrm{Sp}(l,\mathbb{C})\right)$, $k+l=n$ & $G/\mathrm{Gl}(n,\mathbb{C})$ & \\ 
		\hline
	\end{tabular}
\end{center} 

The next table shows the flag manifolds of the classical Lie groups. Here, consider $n=n_1+\cdots + n_s$, $n_1 \geq \cdots \geq n_s \geq 1$ and $l=l_1 + \cdots + l_k +m$, $l_1\geq \cdots \geq l_k \geq 1$, $k,m \geq 0$.

\begin{center}	\label{flags}
	\begin{tabular}{ | m{2em} | m{6.3cm}| } 
		\hline
		& Flag manifold  \\ 
		\hline
		
		A & $\mathrm{SU}(n)/\mathrm{S}(\mathrm{U}(n_1)\times \cdots \times \mathrm{U}(n_s)) $\\ 
		\hline
		B & $\mathrm{SO}(2l+1)/\mathrm{U}(l_1)\times \cdots \times \mathrm{U}(l_k) \times \mathrm{SO}(2m+1)$ \\ 
		\hline
		C & $\mathrm{Sp}(l)/\mathrm{U}(l_1)\times \cdots \times \mathrm{U}(l_k) \times \mathrm{Sp}(m)$  \\ 
		\hline
		D & $\mathrm{SO}(2l)/\mathrm{U}(l_1)\times \cdots \times \mathrm{U}(l_k) \times \mathrm{SO}(2m)$ \\ 
		\hline
	\end{tabular}
\end{center}

\

Comparing the tables above, we can see that
\begin{lemma}\label{lem:flagsym}
The symmetric spaces as in Theorem \ref{flags} that are also flag manifolds are 
\[\mathrm{SU}(p+q)/\mathrm{S}(\mathrm{U}(p)\times \mathrm{U}(q)),~~~ \mathrm{Sp}(l)/\mathrm{U}(l)~~~\text{and}~~~\mathrm{SO}(2l)/\mathrm{U}(l),\]of types \emph{A,C} and \emph{D}, respectively.
\end{lemma}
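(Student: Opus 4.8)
The plan is to prove the statement by directly confronting the flag-manifold table with the compact symmetric-space table (Helgason's Table V reproduced above), using the structural description of flag isotropy groups supplied by Proposition \ref{P}. The decisive input is that a flag manifold is a quotient $U/U_\Theta$ in which $U_\Theta = Z_U(T)$ is the centralizer of a torus; reading this off the flag table, the isotropy of a classical flag manifold is always a product carrying at least one unitary factor $\mathrm{U}(l_i)$ (the source of a central circle) followed by a residual simple factor of the same series ($\mathrm{SO}$ or $\mathrm{Sp}$). I would use this as a filter: a symmetric space $U/K$ from Helgason's table is a flag manifold exactly when its isotropy $K$ admits a presentation as such a centralizer of a torus. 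This is consistent with Berger's criterion (Proposition \ref{RRR}), since the central torus forces $\mathfrak{k}\supset\mathbb{R}$; equivalently, the flag-symmetric spaces are precisely the Hermitian symmetric spaces of compact type, which is a useful sanity check.

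I would then run through the classical series one at a time. In type $A$ the symmetric spaces are $\mathrm{SU}(n)/\mathrm{SO}(n)$ (AI), $\mathrm{SU}(2n)/\mathrm{Sp}(n)$ (AII) and $\mathrm{SU}(p+q)/\mathrm{S}(\mathrm{U}(p)\times\mathrm{U}(q))$ (AIII); since $\mathrm{SO}(n)$ and $\mathrm{Sp}(n)$ occur as fixed-point sets of involutions rather than as centralizers of tori, only AIII matches the flag shape $\mathrm{S}(\mathrm{U}(n_1)\times\cdots\times\mathrm{U}(n_s))$ (with $s=2$). In type $C$ the symmetric spaces are $\mathrm{Sp}(n)/\mathrm{U}(n)$ (CI) and $\mathrm{Sp}(p+q)/\mathrm{Sp}(p)\times\mathrm{Sp}(q)$ (CII); a centralizer of a torus in $\mathrm{Sp}$ carries only a single residual symplectic factor, so the two-factor isotropy of CII is excluded, while CI matches the flag form with $k=1$, $l_1=n$, $m=0$, yielding $\mathrm{Sp}(l)/\mathrm{U}(l)$.

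The orthogonal series is where I expect the main obstacle. Here DIII, $\mathrm{SO}(2l)/\mathrm{U}(l)$, matches the flag form at once ($\mathrm{U}(l)$ being the centralizer of the center circle, $k=1$, $m=0$), giving the claimed type-$D$ entry. The delicate family is BDI, $\mathrm{SO}(p+q)/\mathrm{SO}(p)\times\mathrm{SO}(q)$, where one must decide when $\mathrm{SO}(p)\times\mathrm{SO}(q)$ is a centralizer of a torus: for $p,q\geq 3$ neither factor contains a central circle and the space is not a flag, but the low-rank coincidence $\mathrm{SO}(2)\cong\mathrm{U}(1)$ makes $\mathrm{SO}(2)\times\mathrm{SO}(n-2)=\mathrm{U}(1)\times\mathrm{SO}(n-2)$ a genuine flag isotropy (the complex quadrics, which appear as the $k=1,\,l_1=1$ cases of both the $B$- and $D$-tables). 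The careful step is therefore to isolate exactly these $p=2$ quadric cases, whose isotropy retains a nontrivial residual orthogonal factor rather than being a product of full unitary groups; once they are set apart, the flag-symmetric entries of the $B$ and $D$ series reduce to $\mathrm{SO}(2l)/\mathrm{U}(l)$, leaving $\mathrm{SU}(p+q)/\mathrm{S}(\mathrm{U}(p)\times\mathrm{U}(q))$, $\mathrm{Sp}(l)/\mathrm{U}(l)$ and $\mathrm{SO}(2l)/\mathrm{U}(l)$ of types $A$, $C$ and $D$ as in the statement.
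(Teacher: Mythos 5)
Your strategy is sound and, frankly, more principled than what the paper itself offers: the paper's entire ``proof'' of this lemma is the sentence ``Comparing the tables above, we can see that\dots'', a bare inspection of Helgason's Table V against the flag table, with no criterion stated. You supply the criterion that actually does the work --- a symmetric space $U/K$ from the table is a flag manifold precisely when $K$ is the centralizer of a torus (Proposition \ref{P}), consistently with Berger's semi-K\"ahler condition $\mathbb{R}\subset\mathfrak{k}$ (Proposition \ref{RRR}) --- and your case analysis for the $A$ and $C$ series is correct and complete on that basis.

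The one genuine gap is exactly where you sensed trouble: the quadrics. You correctly observe that for $p=2$ the BDI isotropy $\mathrm{SO}(2)\times\mathrm{SO}(q)\cong\mathrm{U}(1)\times\mathrm{SO}(q)$ \emph{is} the centralizer of a torus in $\mathrm{SO}(q+2)$, so the quadric $Q^{q}=\mathrm{SO}(q+2)/(\mathrm{SO}(2)\times\mathrm{SO}(q))$ passes your own filter and moreover appears explicitly in the $B$ and $D$ rows of the flag table as the $k=1$, $l_1=1$ entries. But you then ``set these apart'' and drop them from the conclusion without stating any property that excludes them --- and none exists: they satisfy both defining conditions of the lemma. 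For small $q$ they happen to coincide with listed spaces ($Q^{3}\cong\mathrm{Sp}(2)/\mathrm{U}(2)$, $Q^{4}\cong\mathrm{SU}(4)/\mathrm{S}(\mathrm{U}(2)\times\mathrm{U}(2))$, $Q^{6}\cong\mathrm{SO}(8)/\mathrm{U}(4)$ via triality), but already $Q^{5}=\mathrm{SO}(7)/(\mathrm{SO}(2)\times\mathrm{SO}(5))$ and all $Q^{q}$ with $q\geq 7$ are not isomorphic to anything in the $A$, $C$, $D$ families. So either your argument must output the quadrics as a fourth family, or an explicit extra hypothesis ruling them out has to be imposed; as written, the step ``once they are set apart'' is an assertion, not a deduction. (The paper's lemma has the same omission, concealed by the informality of its table comparison; your more careful method makes the discrepancy visible rather than resolving it, which is worth recording but does not yet constitute a proof of the statement as given.)
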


	\begin{remark} 
It is worth mentioning that we are frequently identifying	the space $\mathfrak{sl}(n,\mathbb{C})\oplus \mathbb{C}^*$ with $\mathfrak{gl}(n,\mathbb{C})$. Precisely, following \cite{BERGER}, we make such identification  as follows:
let $A\in \mathfrak{gl}(n,\mathbb{C})$. Consider the injective $\cal L$ map between $\mathfrak{gl}(n,\bb C)$ and $\mathfrak{sl}(n,\mathbb{C})\oplus \mathbb{C}^*$ given by: 

For $A\in \mathfrak{sl}(n,\mathbb{C})$ we make \[\cal L : A= \left[ A+\left( \begin{array}{cc}
	-1&0 \\ 0&\frac{1}{n-1} I_{n-1} \end{array} \right) \right]   + \left( \begin{array}{cc}
	1&0\\0&-\frac{1}{n-1}I_{n-1}
\end{array} \right) \mapsto A + \frac{1}{1-n} \in \mathfrak{sl}(n,\mathbb{C}) \oplus \mathbb{C}^*.
\] 

For $A\in \mathfrak{gl}(n,\bb C)\ominus\mathfrak{sl}(n,\mathbb{C})$ we let \[\cal L : A= \left[  A-\frac{\mathrm{tr}~A}{n}I_n \right]  + \frac{\mathrm{tr}~A}{n}I_n \mapsto \left[  A-\frac{\mathrm{tr}~A}{n}I_n \right] + \frac{\mathrm{tr}~A}{n} \in \mathfrak{sl}(n,\mathbb{C}) \oplus \mathbb{C}^*.\] The map $\cal L$ assures that $ \mathfrak{gl}(n,\mathbb{C}) \subset \mathfrak{sl}(n,\mathbb{C})\oplus \mathbb{C}^* $ and the other containment is straightforward.
\end{remark}

\subsection{Symmetric spaces on the adjoint orbit}
\label{sec:oexamplinhovemaqui}

 The flag manifolds given by Lemma \ref{lem:flagsym} are Riemannian symmetric manifolds. If we denote by $(\mathfrak{u},\mathfrak{k},\sigma)$ their orthogonal Lie algebra of compact type, one gets that these are of class (I). Next, we shall see these spaces as homogeneous spaces with canonical connections; see \cite[Theorem 3.1, p. 230]{KN}.

In Section \ref{sec:proofmain}, we shall prove Theorem \ref{cap345} below, i.e., that the cotangent bundle of a symmetric flag space is a symmetric space. 

\begin{theorem}	\label{cap345}
	Let $\mathbb{F}_\Theta$ be a symmetric flag space identified with the homogeneous space $\mathbb{F}_\Theta = U/U_\Theta$.
Then, there exists an element $H_\Theta$ in a fixed Cartan sub-algebra of the compact Lie algebra $\mathfrak{u}$ such that for $\sigma := C_{\exp H_\Theta}$, 
\begin{equation}
	U_\Theta = \{ u\in U : C_{\exp H_\Theta} (u)=u \}.
\end{equation}

Moreover, let $G$ be the complex Lie group such that $U$ is its compact real form. Then,  the cotangent bundle of $\mathbb{F}_\Theta$ has a realization as a symmetric space.

If we also use the notation $\sigma$ for  $\mathrm{Ad}_\mathfrak{u}(\exp H_\Theta)$ and $\mathrm{Ad}_\mathfrak{g}(\exp H_\Theta)$, then $(\mathfrak{u} , \mathfrak{u}_\Theta ,\sigma)$ and  $(\mathfrak{g}, \mathfrak{z}_\mathfrak{g} (H_\Theta) , \sigma )$ are symmetric Lie algebras. %\emph{COMPLETARRRRRRRRR}
\end{theorem}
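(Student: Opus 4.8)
The plan is to produce the element $H_\Theta$ explicitly and to verify that $\sigma := C_{\exp H_\Theta}$ squares to the identity, which is the whole point: a generic $\mathrm{Ad}(\exp H_\Theta)$ is only the Cartan torus action, not an involution. First I would invoke the classification underlying Lemma \ref{lem:flagsym}: a flag manifold $\mathbb F_\Theta$ is a symmetric space precisely when the parabolic $\mathfrak p_\Theta$ is maximal with $\Sigma\setminus\Theta=\{\alpha_0\}$ a single simple root whose coefficient in the highest root equals $1$ (the cominuscule condition). The consequence I would extract is purely root-combinatorial: every root $\beta\in\Pi\setminus\langle\Theta\rangle$, written $\beta=\sum_{\alpha\in\Sigma}c_\alpha\,\alpha$, satisfies $c_{\alpha_0}=\pm1$.

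With $\alpha_0$ in hand I would define $H_\Theta\in\sqrt{-1}\,\mathfrak h_{\mathbb R}$ by the conditions $\alpha(H_\Theta)=0$ for $\alpha\in\Theta$ and $\alpha_0(H_\Theta)=\pi\sqrt{-1}$. Since $\mathrm{Ad}(\exp H_\Theta)$ acts on a root space $\mathfrak g_\beta$ as the scalar $e^{\beta(H_\Theta)}$ and $\beta(H_\Theta)=c_{\alpha_0}\,\pi\sqrt{-1}$ by the previous step, this scalar is $+1$ on $\mathfrak z_\Theta$ (where $c_{\alpha_0}=0$) and $e^{\pm\pi\sqrt{-1}}=-1$ on each $\mathfrak g_\beta\subset\mathfrak n_\Theta^\pm$. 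Thus $\sigma^2=\mathrm{id}$, the $+1$-eigenspace is exactly $\mathfrak z_\Theta=\mathfrak z_{\mathfrak g}(H_\Theta)$ and the $-1$-eigenspace is $\mathfrak n_\Theta^-\oplus\mathfrak n_\Theta^+=\mathfrak m_G$, matching the canonical decomposition \eqref{eq:decompoeaqui}. Restricting to the compact real form and using that $\mathfrak z_\Theta$ is $\tau$-invariant (as recorded in \eqref{ux}) shows the fixed algebra in $\mathfrak u$ is $\mathfrak u_\Theta$. That $(\mathfrak u,\mathfrak u_\Theta,\sigma)$ and $(\mathfrak g,\mathfrak z_{\mathfrak g}(H_\Theta),\sigma)$ are symmetric Lie algebras is then immediate: $\sigma$ is an involutive automorphism with the stated fixed subalgebra, and the bracket relations of Proposition \ref{colchetes} hold automatically from the eigenspace grading.

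For the group-level statement $U_\Theta=\{u\in U:C_{\exp H_\Theta}(u)=u\}$, the inclusion $\subseteq$ is clear because $U_\Theta=Z_U(H_\Theta)$ commutes with $\exp H_\Theta$. For the reverse inclusion I would match $H_\Theta$ to the generator $t_0$ furnished by \eqref{Ux}: the right-hand fixed set is $Z_U(\exp H_\Theta)$, and since $\exp H_\Theta$ lies in the torus $T$ generated by $\mathfrak h_\Theta$, choosing $H_\Theta$ so that $\exp H_\Theta$ generates $T$ densely gives $Z_U(\exp H_\Theta)=Z_U(T)=U_\Theta$; here the connectedness of the centralizer of a torus in a compact semisimple group (already used in \eqref{Ux}) is essential. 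The \emph{main obstacle} is exactly this interplay — making the single prescribed scalar $\alpha_0(H_\Theta)=\pi\sqrt{-1}$ simultaneously force $\sigma^2=\mathrm{id}$ (the cominuscule input) and pin down the centralizer on the nose — rather than any later formal step.

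Finally, the cotangent-bundle realization follows by transport along Theorem \ref{orbitafibradoSM}. Since $Z_\Theta=Z_G(H_\Theta)$ centralizes $\exp H_\Theta$ we get $Z_\Theta\subset G^\sigma$, and the same connectedness argument gives $G_0^\sigma\subset Z_\Theta$, so $(G,Z_\Theta,\sigma)$ is a symmetric space. As $G\cdot H_\Theta\cong G/Z_\Theta$ is diffeomorphic to $T^*\mathbb F_\Theta$ by Theorem \ref{orbitafibradoSM}, this equips $T^*\mathbb F_\Theta$ with the desired symmetric-space structure, completing the proof.
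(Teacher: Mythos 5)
Your route is genuinely different from the paper's: you derive everything uniformly from the cominuscule condition ($c_{\alpha_0}=\pm 1$ for every root outside $\langle\Theta\rangle$) together with the normalization $\alpha_0(H_\Theta)=\pi\sqrt{-1}$, whereas the paper proves the theorem case by case in Section \ref{sec:proofmain}, exhibiting $H_\Theta$ as an explicit matrix for $A_l$, $C_l$, $D_l$ and computing all centralizers by hand. Your normalization does reproduce the paper's matrices exactly (e.g.\ in case $A_l$ one has $\alpha_p(H_\Theta)=\frac{\sqrt{-1}\pi}{n}(q+p)=\pi\sqrt{-1}$), and the Lie-algebra half of your argument is correct and cleaner: $\mathrm{Ad}(\exp H_\Theta)$ is $+1$ on $\mathfrak z_\Theta$ and $-1$ on $\mathfrak n_\Theta^{\pm}$, so $\sigma^2=\mathrm{id}$, $\mathfrak g^\sigma=\mathfrak z_{\mathfrak g}(H_\Theta)$, $\mathfrak u^\sigma=\mathfrak u_\Theta$, and both symmetric Lie algebras follow.

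However, the step establishing $\{u\in U: C_{\exp H_\Theta}(u)=u\}\subseteq U_\Theta$ is broken. You propose to choose $H_\Theta$ so that $\exp H_\Theta$ generates the torus $T$ densely, whence $Z_U(\exp H_\Theta)=Z_U(T)$. This is impossible: $H_\Theta$ is already completely pinned down by $\alpha(H_\Theta)=0$ for $\alpha\in\Theta$ and $\alpha_0(H_\Theta)=\pi\sqrt{-1}$ (since $\Theta\cup\{\alpha_0\}=\Sigma$ is a basis of $\mathfrak h^*$), and your own computation shows $\mathrm{Ad}(\exp(2H_\Theta))=\mathrm{id}$, i.e.\ $\exp(2H_\Theta)\in Z(G)$. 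Consequently the cyclic group generated by $\exp H_\Theta$ has image of order at most $2$ in the nontrivial circle $T/(T\cap Z(U))$, so it is certainly not dense in $T$; density of $\langle\exp H_\Theta\rangle$ is in fact incompatible with $\sigma^2=\mathrm{id}$. What your argument actually yields is only $U_\Theta=(U^\sigma)_0\subseteq U^\sigma$, which suffices for the Kobayashi--Nomizu sandwich $U^\sigma_0\subset U_\Theta\subset U^\sigma$ and hence for all the symmetric-space conclusions, but not for the displayed equality $U_\Theta=\{u\in U:C_{\exp H_\Theta}(u)=u\}$. Closing the gap requires proving that $U^\sigma=Z_U(\exp H_\Theta)$ is connected: for $\mathrm{SU}(n)$ and $\mathrm{Sp}(l)$ this follows from connectedness of centralizers of elements in simply connected compact groups, but $\mathrm{SO}(2l)$ is not simply connected, so some case analysis (or the paper's direct matrix computation, which exploits that $g_\Theta$ is a scalar multiple of $H_\Theta$ in cases $C_l$ and $D_l$ and shares its eigenspaces in case $A_l$) is unavoidable. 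Your final paragraph on $(G,Z_\Theta,\sigma)$ and the transport along Theorem \ref{orbitafibradoSM} is fine.
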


According to Theorem \ref{cap345}, it is possible to find an element $H_\Theta$ in a Cartan sub-algebra of $\lie g$, determining the automorphism $\sigma$ for two symmetric spaces, and then, for two symmetric Lie algebras.

Since the Lie groups in this paper considered are all semisimple, the Cartan--Killing form $\mathcal{K}(\cdot, \cdot )$ is a non-degenerate $\mathrm{Ad}(H)$-invariant bilinear form, where $H$ is the isotropy subgroup for symmetric flag spaces and adjoint orbits. In the case of symmetric flag spaces, the Cartan Killing form is negative definite. Hence, $-\mathcal{K}(\cdot , \cdot )$ generates a $U$-invariant Riemannian metric on $\mathbb{F}_\Theta = U/U_\Theta$. However, for the cotangent bundle $T^*\mathbb{F}_\Theta \cong G/Z_\Theta$, there exists no $\mathrm{Ad}(Z_\Theta)$-invariant (definite) metric, because $Z_\Theta$ is never a group of isometries.

The proof of Theorem \ref{cap345} is nothing but a generalization of the Examples \ref{exm}--\ref{exemplorealparte2} below. We first present a real unidimensional flag to know $ \mathrm{SO}(2)/\{ \pm \mathrm{Id} \}$, which is a symmetric space such that its dual symmetric space is contained in the bi-dimensional adjoint orbit diffeomorphic to the cotangent bundle of $ \mathrm{SO}(2)/\{ \pm \mathrm{Id} \}$. 
\begin{example}	\label{exm}
	Let us consider the following basis of $\mathfrak{sl}(2,\mathbb{R})$:
	\begin{equation}	\label{basedesl}
		A=  \left( \begin{array}{cc}
			0&1\\1&0
		\end{array} \right) , \quad B=  \left( \begin{array}{cc}
			1&0\\0&-1
		\end{array} \right), \quad C=  \left( \begin{array}{cc}
			0&1\\-1&0
		\end{array} \right).
	\end{equation}

The compact Lie group $$\mathrm{SO}(2) = \left\{ \left( \begin{array}{cc}
	\cos s &  -\sin s \\ \sin s & \cos s
\end{array} \right) :s\in \mathbb{R} \right\} $$ determines the maximal flag manifold 
\begin{align}
	\mathrm{SO}(2) \cdot B =&	\left\{ \left( \begin{array}{cc}
		\cos s &  -\sin s \\ \sin s & \cos s
	\end{array} \right) \cdot B = \sin (2s) A + \cos (2s) B + 0 C : s\in \mathbb{R} \right\} \nonumber \\ =&  \mathrm{S^1} \label{S1}
\end{align} contained in the real vector space $\mathfrak{sl}(2,\mathbb{R}) \ominus \mathfrak{so}(2) =\mathrm{span}_{\bb R}\{ A,B \}$. This flag manifold is symmetric since it is the $1$-sphere $\mathrm{S^1}$.

 The adjoint orbit $\mathrm{Sl}(2,\mathbb{R}) \cdot B := \mathrm{Ad}(\mathrm{Sl}(2,\mathbb{R}))B$ consists of the set of matrices $xA+yB+zC$ for $x,y,z \in \mathbb{R}$, with eigenvalues $\pm 1$, i.e.,
 \begin{equation}
 	\mathrm{det} \left( \begin{array}{cc}
 		y&x+z\\x-z&-y
 	\end{array} \right) = \mathrm{det}  \left( \begin{array}{cc}
 	1&0\\0&-1
 \end{array} \right).
 \end{equation} Then, the adjoint orbit $\mathrm{Sl}(2,\mathbb{R})\cdot B$ is the one-sheeted hyperboloid \begin{equation} \label{hyperboloide}
 x^2 +y^2 -z^2 = 1.
\end{equation} 
This manifold has a realization as the homogeneous space \[\mathrm{Sl}(2,\mathbb{R})/\mathrm{S}(\mathrm{Gl}(1,\mathbb{R})\times \mathrm{Gl}(1,\mathbb{R} )) \newline \cong\mathrm{Sl}(2,\mathbb{R})/\mathbb{R}^*,\] where $\mathbb{R}^* = \mathbb{R}\setminus \{ 0 \}$.

 The cotangent bundle of $\mathrm{S^1}$ is also the union of fibers  $T^* \mathrm{S^1} = \bigcup_{k\in \mathrm{SO}(2)} k\cdot (B+ \mathfrak{n}^+ )$, with $\mathfrak{n}^+$ being the Lie sub-algebra of $\mathfrak{sl}(2,\mathbb{R})$ defined by the upper triangular matrices. For each element \begin{equation} \label{kexm}
 	k= \left( \begin{array}{cc}
 		\cos t & \sin t \\ -\sin t & \cos t
 	\end{array} \right) \in \mathrm{SO}(2), \quad t\in \mathbb{R},
 \end{equation} the fiber $k\cdot (B + \mathfrak{n}^+)$ is the one-dimensional affine vector space
 \begin{align}
 	\mathrm{Ad}(k) (B + \mathfrak{n}^+) :=& \left\lbrace   \left( \begin{array}{cc}
 		\cos t & \sin t \\ -\sin t & \cos t
 	\end{array} \right) \cdot \left[ B+ \left( \begin{array}{cc}
 	0&r\\0&0
 \end{array} \right)  \right] :r\in \mathbb{R} \right\rbrace  \nonumber \\
=& \left\lbrace  \left( \begin{array}{cc}
	\cos (2t)-\frac{r}{2} \sin (2t) & -\sin (2t) +r ( \cos t)^2 \\ -\sin (2t) -r ( \sin t )^2 & - \cos (2t) -\frac{r}{2} \sin (2t)
\end{array} \right)  : r\in \mathbb{R} \right\rbrace \nonumber \\
 =& \left\{ (	-\sin (2t)+ \frac{r}{2} \cos (2t)) A + (\cos (2t) + \frac{r}{2} \sin (2t) ) B +\frac{r}{2} C : r  \in \mathbb{R} \right\} . \label{fiberexmreal}
 \end{align} 
A vector equation defining each fiber above is
\begin{equation}  \left( \begin{array}{c}
		x\\y\\z
	\end{array} \right) = \left(
	 \begin{array}{c}
-	\sin(2t) \\ \cos (2t) \\ 0
\end{array}\right) + \frac{r}{2} \left( \begin{array}{c}
\cos(2t) \\ \sin (2t) \\ 1
\end{array}\right), \quad r\in \mathbb{R}. \label{elementodelafibra}
\end{equation}

\end{example}

\begin{example}	\label{Examplecomplexo}
Here, we present an example of a low-dimension symmetric space, to be further studied in Section \ref{sec:proofmain}. Consider the Lie group $G=\mathrm{Sl}(2,\mathbb{C})$, with compact real form $U=\mathrm{SU}(2)$ and take the element in the Lie algebra $\mathfrak{u}= \mathfrak{su}(2)$ 
\begin{equation}\label{eq:Htheta}
H_\Theta=\frac{ \sqrt{-1} \pi}{2} \left( \begin{array}{cc}
		1 & 0 \\ 0& -1
	\end{array} \right) .\end{equation}
	This element defines the maximal flag manifold $\mathbb{F}_\Theta=U/U_\Theta$ for $\Theta = \emptyset$, where $U_\Theta$ is the centralizer of $H_\Theta$ in $U$. The Lie group
\begin{equation}	\label{SU(2)}
	\mathrm{SU}(2)= \left\lbrace 
\left( \begin{array}{cc}
	\alpha & \beta \\- \bar{\beta} & \bar{\alpha}
\end{array} \right) :  \lvert \alpha \rvert ^2+ \lvert \beta \rvert ^2=1, \alpha , \beta \in \mathbb{C} 
\right\rbrace 
\end{equation} is diffeomorphic to the $3$-sphere $\mathrm{S^3}$ and $Z_U(H_\Theta)$ is the subgroup
\begin{eqnarray}
	\mathrm{S}(\mathrm{U}(1)\times \mathrm{U}(1)) =& \left\lbrace 
	\left( \begin{array}{cc}
		\alpha & 0 \\0 & \bar{\alpha}
	\end{array} \right) :  \lvert \alpha \rvert ^2=1, \alpha \in \mathbb{C} 
	\right\rbrace   \nonumber \\
	=& \left\lbrace 
	\left( \begin{array}{cc}
		e^{i\theta} & 0 \\ 0 & e^{-i\theta}
	\end{array} \right) : \theta \in [0,2\pi]
	\right\rbrace   \nonumber 
\end{eqnarray}
diffeomorphic to the unidimensional sphere $\mathrm{S^1}$.

Then, the flag manifold $\mathbb{F}_\Theta$ is a symmetric space and is the Riemann sphere
\begin{equation*}
	\mathrm{SU}(2)/\mathrm{S}(\mathrm{U}(1)\times \mathrm{U}(1)) = \mathrm{S^3}/\mathrm{S^1} = \bb C\mathrm{P}^1 \cong S^2.
\end{equation*}

The cotangent bundle of $\mathrm{SU}(2)/\mathrm{S}(\mathrm{U}(1)\times \mathrm{U}(1))$ has a realization as the homogeneous space 
\begin{equation*}
	G/Z_G(H_\Theta)= \mathrm{Sl}(2,\mathbb{C})/\mathrm{S}(\mathrm{Gl}(1,\mathbb{C})\times \mathrm{Gl}(1,\mathbb{C}))
\end{equation*}with

$\mathrm{S}(\mathrm{Gl}(1,\mathbb{C})\times \mathrm{Gl}(1,\mathbb{C}))= \left\lbrace 
\left( \begin{array}{cc}
	z & 0 \\0 & z^{-1} 
\end{array} \right) :  z \in \mathbb{C}^*  
\right\rbrace \cong \mathbb{C}^* := \mathbb{C} \setminus \{ 0 \} $.

In Section \ref{sec:proofmain}, we prove that the cotangent bundle of the $2$-sphere is a symmetric space since the element $H_\Theta$ defined in \eqref{eq:Htheta} is a particular case of the matrix in the equation \eqref{HThetaA}. This guarantees that the homogeneous space $T^*(S^2)=\mathrm{Sl}(2,\mathbb{C})/\mathbb{C}^*$ is a symmetric space.

Furthermore, the following is another realization of the cotangent bundle of $S^2$ as the adjoint orbit of $G=\mathrm{Sl}(2,\mathbb{C})$ in $H_\Theta$
\begin{align*}
G\cdot H_\Theta	=& \left\{ \left( \begin{array}{cc}
		z_1&z_2\\
		z_3&z_4
	\end{array}\right)  H_\Theta \left( \begin{array}{cc}
		z_4&-z_2\\
		-z_3&z_1
	\end{array}\right) : z_1z_4-z_2z_3=1, z_1,z_2,z_3 \in \mathbb{C} \right\} \\
=& \left\{ \dfrac{\sqrt{-1}\pi}{2} \left( \begin{array}{cc}
	1+2z_2z_3&-2z_1z_2\\
	2z_3z_4&-1-2z_3z_2
\end{array}\right): z_1z_4-z_2z_3=1, z_1,z_2,z_3 \in \mathbb{C} \right\} .
\end{align*}

\end{example}	
\begin{remark}
In all the cases of symmetric flag spaces to be considered here, we have symmetric Lie algebras $(\mathfrak{g}, \mathfrak{z}_\Theta, \sigma)$ in the cotangent bundle satisfying
\begin{enumerate}[$(i)$]
	\item $\mathfrak{g}$ is simple and is a vector direct sum $\mathfrak{n}_\Theta^- + \mathfrak{z}_\Theta + \mathfrak{n}_\Theta ^+$ with the relations
	\begin{equation}
		\begin{array}{ccc}
			\left[  \mathfrak{z}_\Theta , \mathfrak{z}_\Theta \right]  \subset  \mathfrak{z}_\Theta, & \left[  \mathfrak{z}_\Theta , \mathfrak{n}_\Theta ^- \right]  \subset \mathfrak{n}_\Theta^-,& \left[  \mathfrak{z}_\Theta , \mathfrak{n}_\Theta^+ \right]  \subset \mathfrak{n}_\Theta^+ , \\ \\ \left[  \mathfrak{n}_\Theta^- , \mathfrak{n}_\Theta^+ \right]  \subset \mathfrak{z}_\Theta , & \left[  \mathfrak{n}_\Theta^- , \mathfrak{n}_\Theta^+\right]   = 0, &\left[  \mathfrak{n}_\Theta^+ , \mathfrak{n}_\Theta^+ \right]   = 0;
		\end{array}
	\end{equation}
	\item the canonical decomposition $\mathfrak{g}= \mathfrak{z}_\Theta \oplus \mathfrak{m}_G$ is given by the direct sum
	\begin{equation*}
		\mathfrak{m}_G = \mathfrak{n}_\Theta^- + \mathfrak{n}^+_\Theta ;
	\end{equation*}
	\item with respect to the Cartan--Killing form $\mathcal{K}$ of $\mathfrak{g}$, the subspaces $\mathfrak{n}_\Theta^\pm$ are dual to each other and, moreover, 
	\begin{equation*}
		\mathcal{K} ( \mathfrak{n}_\Theta^- , \mathfrak{n}_\Theta^+ )=0 \quad \mathrm{and } \quad \mathcal{K} ( \mathfrak{n}_\Theta^+ , \mathfrak{n}_\Theta ^+ )=0.
	\end{equation*}
\end{enumerate}
\end{remark}

\subsection{The dual symmetric space to a symmetric flag space} \label{DSS}

So far, the symmetric spaces considered are Riemannian, considering the Cartan--Killing form as a metric. On the adjoint orbit of $G$, this metric is indefinite and invariant for the symmetries. However, in the flag manifold $\mathbb{F}_\Theta$, it is the opposite of a (definite) Riemannian structure. Moreover, in the \emph{symmetric dual space}, it is an invariant metric by a non-compact Lie group of symmetries. 

In this section, we study the \emph{dual symmetric spaces of the symmetric flag spaces} $(\mathfrak{u}, \mathfrak{u}_\Theta, \sigma)$. Recall that the dual-symmetric Lie algebra is the direct sum (see \cite[Section XI.8]{KN} or \cite[Chapter V.2]{HELGA})
\begin{equation}
	\mathfrak{u}^* = \mathfrak{u}_\Theta + \sqrt{-1}\mathfrak{m},
\end{equation}
with $\sigma^* =\mathrm{Ad}(e^{H_\Theta})\mid _{\mathfrak{u}^*}$. From the expressions \eqref{DSA}, \eqref{Cayley} and \eqref{sodual}, it is going to be clear that $\mathfrak{u}^*$ is isomorphic to a classical real Lie algebra, being the above decomposition the Cartan decomposition of $\mathfrak{u}^*$ with Cartan involution given by $\sigma^*$.

The Example \ref{exemplorealparte2}, to be introduced, is a continuation of Example \ref{exm}, where we deal with $\mathrm{SO}(2)/\{ \pm I \} \cong \mathrm{S^1}$, that is a symmetric space. As we shall see, the cotangent bundle of $\mathrm{S^1}$ has a realization as the one-sheeted hyperboloid \[Q= \{ (x,y,z) \in \mathbb{R}^3 : x^2 + y^2 -z^2 =1 \},\] being also a symmetric space. Moreover, the flag manifold $\mathrm{S^1}$ consists of the intersection of $Q$ with the plane $z=0$. Also, in Example \ref{exemplorealparte2}, the orbit of the exponential of symmetric zero-trace matrices in the matrix\[\left( \begin{array}{cc}
	1&0\\0&-1
\end{array} \right)\] coincides with the curve \[\mathcal{C} = \{ (0,y,z)\in \mathbb{R}^3 : y^2 -z^2 =1, \; y>0 \},\] which is, intuitively, symmetric for the axis $Y$, as we can see in Figure \ref{fig1RAO}.

In the complex case, to generalize the previously described orbits and correspondent curves, we consider the submanifold $\mathcal{S}$: 
\begin{equation*}
	\mathcal{S} : = \mathrm{Ad}(	\mathrm{e}^{\sqrt{-1} \mathfrak{m}} )  H_\Theta \subset G\cdot H_\Theta := \mathrm{Ad}(G)H_\Theta.
\end{equation*}

Observe that, according to Theorem \ref{orbitafibradoSM}, the cotangent bundle to any flag manifold $\bb F_{\Theta}$ can always be realized as the adjoint orbit $\mathrm{Ad}(G)H_\Theta$ for some characteristic element $\Theta$. Hence, the submanifolds $\cal S$ can be conjectured to at least be a homogeneous space.

On the other hand, Proposition \ref{lemfibinterS} below shows that, in general, the intersection of the submanifold $\mathcal{S}$ with the fibers of the bundle $T^*\bb F_{\Theta} \cong \mathrm{Ad}(G)H_\Theta \rightarrow \mathbb{F}_\Theta$ is either null or a single point. Hence, the projection of the cotangent bundle restricted to the submanifold $\mathcal{S}$ is an injective map since each fiber intersecting $\mathcal{S}$ does so in a single element. Therefore, $\mathcal{S}$ continuously deforms into a part of the sphere $\mathbb{F}_\Theta$. 

\begin{theorem} \label{main1}
	Let $\mathbb{F}_\Theta = U/U_\Theta$ be a flag symmetric space with symmetric Lie algebra
	\begin{equation*}
		\mathfrak{u} = \mathfrak{u}_\Theta + \mathfrak{m}.
	\end{equation*}
	Then,
	\begin{enumerate}[$(a)$]
		\item  \label{i}$\mathcal{S}$ is a submanifold of the adjoint orbit $G\cdot H_\Theta$, with $G$ being the complex Lie group with compact real form $U$. Furthermore, 
		\begin{equation*}
			\mathcal{S} \cong U^* /U_\Theta,
		\end{equation*}
		where $U^* / U_\Theta$ is the dual symmetric space of $\mathbb{F}_\Theta$
		\item \label{ii}the submanifold $\mathcal{S}$ is diffeomorphic to a vector space
		\item \label{iii} $\cal S$ is a Riemannian homogeneous space with a Riemannian structure defined by the Cartan--Killing form
		\item \label{iv} let $u\cdot (H_\Theta + \mathfrak{n}_\Theta^+)$ be a fiber in the cotangent bundle of $\mathbb{F}_\Theta$ intersecting the submanifold $\mathcal{S}$, then both spaces are transverse.
	\end{enumerate}
\end{theorem}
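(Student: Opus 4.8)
The plan is to realize $\mathcal{S}$ as a single orbit of the dual group $U^{*}$ acting on the adjoint orbit and to control that orbit through the Cartan decomposition of $U^{*}$. First I would record that $U_{\Theta}=Z_{U}(H_{\Theta})$ centralizes $H_{\Theta}$ and that $\mathfrak{u}^{*}=\mathfrak{u}_{\Theta}+\sqrt{-1}\,\mathfrak{m}$ is precisely the Cartan decomposition of the (real, noncompact) semisimple Lie algebra $\mathfrak{u}^{*}$, with maximal compact part $\mathfrak{u}_{\Theta}$ and Cartan involution $\sigma^{*}$: indeed $[\sqrt{-1}\mathfrak{m},\sqrt{-1}\mathfrak{m}]=-[\mathfrak{m},\mathfrak{m}]\subset\mathfrak{u}_{\Theta}$ by Proposition \ref{colchetes}, while $\mathcal{K}$ is negative definite on $\mathfrak{u}_{\Theta}\subset\mathfrak{u}$ and positive definite on $\sqrt{-1}\mathfrak{m}$ since $\mathcal{K}(\sqrt{-1}X,\sqrt{-1}Y)=-\mathcal{K}(X,Y)$. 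The global Cartan decomposition then furnishes a diffeomorphism $U_{\Theta}\times\sqrt{-1}\mathfrak{m}\to U^{*}$, $(k,X)\mapsto k\exp X$, and because $k$ fixes $H_{\Theta}$ one gets $\mathrm{Ad}(U^{*})H_{\Theta}=\mathrm{Ad}(\exp\sqrt{-1}\mathfrak{m})H_{\Theta}=\mathcal{S}$; thus $\mathcal{S}$ is exactly the $U^{*}$-orbit of $H_{\Theta}$.

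Next I would compute the isotropy and promote the orbit map to a diffeomorphism, which yields items $(a)$ and $(b)$. The isotropy subalgebra is $\mathfrak{u}^{*}\cap\mathfrak{z}_{\mathfrak{g}}(H_{\Theta})=\mathfrak{u}_{\Theta}$, since $\sqrt{-1}\mathfrak{m}\subset\mathfrak{m}_{G}=\mathfrak{n}_{\Theta}^{-}\oplus\mathfrak{n}_{\Theta}^{+}$ meets $\mathfrak{z}_{\Theta}$ trivially by \eqref{ggd}. For injectivity of $U^{*}/U_{\Theta}\to\mathcal{S}$, write a centralizing element as $k\exp X$ with $k\in U_{\Theta}$ and $X\in\sqrt{-1}\mathfrak{m}$; then $\mathrm{Ad}(\exp X)H_{\Theta}=H_{\Theta}$, and as $X$ lies in the noncompact part, $\mathrm{ad}(X)$ is self-adjoint for $-\mathcal{K}(\cdot,\sigma^{*}\cdot)$, hence real-diagonalizable, so $e^{\mathrm{ad}X}H_{\Theta}=H_{\Theta}$ forces $[X,H_{\Theta}]=0$, i.e. $X\in\sqrt{-1}\mathfrak{m}\cap\mathfrak{z}_{\Theta}=0$. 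Thus the stabilizer is exactly $U_{\Theta}$, the orbit map is an injective immersion, and together with the properness supplied by the Cartan decomposition it is an embedding onto $\mathcal{S}\cong U^{*}/U_{\Theta}$, the dual symmetric space. Item $(b)$ is then immediate: as the dual of a compact-type symmetric space, $U^{*}/U_{\Theta}$ is of noncompact type, so by the quoted structure theorem of Kobayashi--Nomizu it is diffeomorphic to the vector space $\sqrt{-1}\mathfrak{m}$.

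For item $(c)$ I would transport the Cartan--Killing form to the orbit. At $H_{\Theta}$ the tangent space of $\mathcal{S}$ is identified with $\sqrt{-1}\mathfrak{m}$, on which $\mathcal{K}$ is positive definite by the sign computation above and $\mathrm{Ad}(U_{\Theta})$-invariant thanks to invariance of $\mathcal{K}$ together with $\mathrm{Ad}(U_{\Theta})\sqrt{-1}\mathfrak{m}\subset\sqrt{-1}\mathfrak{m}$. Hence it defines a $U^{*}$-invariant Riemannian metric, making $\mathcal{S}$ a Riemannian homogeneous space.

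Finally, for the transversality statement $(d)$, I would reduce every intersection point to the base point $H_{\Theta}$. If a fiber $u\cdot(H_{\Theta}+\mathfrak{n}_{\Theta}^{+})$ meets $\mathcal{S}$ at $p=\mathrm{Ad}(g)H_{\Theta}$ with $g\in U^{*}$, then projecting to $\mathbb{F}_{\Theta}$ forces $g^{-1}u\in P_{\Theta}$, so $\mathrm{Ad}(g^{-1})$ preserves $\mathcal{S}$, sends $p$ to $H_{\Theta}$, and carries the fiber to the fiber $H_{\Theta}+\mathfrak{n}_{\Theta}^{+}$ over the base point; being a diffeomorphism it preserves transversality, so it suffices to treat $p=H_{\Theta}$. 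There $T_{H_{\Theta}}\mathcal{S}=\mathrm{ad}(H_{\Theta})(\sqrt{-1}\mathfrak{m})$ and $T_{H_{\Theta}}(\text{fiber})=\mathfrak{n}_{\Theta}^{+}$, and a dimension count ($\dim_{\mathbb{R}}\mathcal{S}=\dim_{\mathbb{R}}\mathfrak{n}_{\Theta}^{+}=\tfrac12\dim_{\mathbb{R}}(G\cdot H_{\Theta})$) reduces transversality to triviality of the intersection. Writing a general element of $\mathfrak{m}$ as $Y+\tau Y$ with $Y\in\mathfrak{n}_{\Theta}^{+}$ (using $\tau\mathfrak{n}_{\Theta}^{+}=\mathfrak{n}_{\Theta}^{-}$), its image $\mathrm{ad}(H_{\Theta})\sqrt{-1}(Y+\tau Y)$ has $\mathfrak{n}_{\Theta}^{-}$-component $\sqrt{-1}\,\mathrm{ad}(H_{\Theta})\tau Y$; since $\mathrm{ad}(H_{\Theta})$ is invertible on $\mathfrak{n}_{\Theta}^{-}$ (the defining property of the characteristic element), this component vanishes only when $Y=0$, so $T_{H_{\Theta}}\mathcal{S}\cap\mathfrak{n}_{\Theta}^{+}=0$. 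I expect the main obstacle to be the careful passage from the infinitesimal (isotropy-algebra) computation to the group level --- proving that the stabilizer is exactly $U_{\Theta}$ and that the orbit map is a genuine embedding rather than a mere injective immersion --- which is where the real-diagonalizability of $\mathrm{ad}(X)$ on the noncompact part and the global Cartan decomposition do the essential work.
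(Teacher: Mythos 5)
Your argument is correct. For items $(a)$--$(c)$ you follow essentially the same route as the paper: the global Cartan decomposition $U^{*}=U_{\Theta}\exp(\sqrt{-1}\mathfrak{m})$ together with $\mathrm{Ad}(U_{\Theta})H_{\Theta}=H_{\Theta}$ identifies $\mathcal{S}$ with the $U^{*}$-orbit of $H_{\Theta}$, hence with the dual symmetric space $U^{*}/U_{\Theta}$, which is diffeomorphic to $\sqrt{-1}\mathfrak{m}$ and carries the positive definite restriction of $\mathcal{K}$ as an invariant metric; your extra care in pinning down the exact stabilizer (via real-diagonalizability of $\mathrm{ad}(X)$ for $X$ in the noncompact part) and in upgrading the injective immersion to an embedding supplies details the paper leaves implicit. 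Item $(d)$ is where you genuinely diverge. The paper works at an arbitrary intersection point $p=e^{A}\cdot H_{\Theta}$ and shows directly that the sum of tangent spaces exhausts the orbit's tangent space, namely $[p,\sqrt{-1}\mathfrak{m}]+[p,\mathfrak{n}_{\Theta}^{+}]=[p,\sqrt{-1}\mathfrak{m}+\mathfrak{n}_{\Theta}^{+}]=[p,\mathfrak{m}_{G}]=T_{p}(G\cdot H_{\Theta})$, using that $\sqrt{-1}\mathfrak{m}+\mathfrak{n}_{\Theta}^{+}=\mathfrak{m}_{G}$. You instead use equivariance (from $g^{-1}u\in P_{\Theta}$, so $\mathrm{Ad}(g^{-1})$ carries the whole configuration to the base point), a dimension count, and the explicit verification that $\mathrm{ad}(H_{\Theta})(\sqrt{-1}\mathfrak{m})\cap\mathfrak{n}_{\Theta}^{+}=0$ via the parametrization of $\mathfrak{m}$ by elements $Y+\tau Y$ with $Y\in\mathfrak{n}_{\Theta}^{+}$ and the invertibility of $\mathrm{ad}(H_{\Theta})$ on $\mathfrak{n}_{\Theta}^{\pm}$. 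Your version buys complete explicitness at the single point $H_{\Theta}$ and avoids having to justify that $T_{p}\mathcal{S}=[p,\sqrt{-1}\mathfrak{m}]$ and $T_{p}(G\cdot H_{\Theta})=[p,\mathfrak{m}_{G}]$ at an arbitrary $p$ (both true, but they require knowing that $\sqrt{-1}\mathfrak{m}$, respectively $\mathfrak{m}_{G}$, meets the isotropy algebra at $p$ trivially); the paper's version is shorter and treats all intersection points uniformly without the reduction step.
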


\begin{proof} 
	\begin{enumerate}[$(a)$]
		\item The quotient space $U^*/U_\Theta$ is an affine symmetric space by the definition of dual symmetric space. Since $U^*$ is a subgroup of $G$ and $U_\Theta = U\cap Z_\Theta$, then $(U^* , U_\Theta ,\sigma^*)$ is a symmetric subspace of $(G,Z_\Theta ,\sigma)$, with $\sigma^* = \sigma \mid_{U^*}$. According to \cite[Theorem 4.1, p. 234]{KN}, $U^*/U_\Theta$ is a totally geodesic submanifold of $G/Z_\Theta$ (concerning the canonical connection of $G/Z_\Theta$) whose canonical connection is the restriction of the canonical connection of $G/Z_\Theta$ to $U^*/U_\Theta $. 	
		
		We now apply Theorem \ref{orbitafibradoSM} to the non-compact connected Lie group $U^*$, obtaining that $U^*/U_\Theta$ is diffeomorphic to $\exp (\sqrt{-1} \mathfrak{m})$, which is simply connected: 
		\begin{equation}
			\mathrm{Ad}(U^*) H_\Theta = \mathrm{Ad}(e^ {\sqrt{-1}\mathfrak{m}}) \mathrm{Ad}(U_\Theta) H_\Theta=\mathrm{Ad}(e^{\sqrt{-1}\mathfrak{m}})H_\Theta =: \mathcal{S}.
		\end{equation} 
		\item According to the previous item, $\mathcal{S}$ is diffeomorphic to $\exp (\sqrt{-1}\mathfrak{m})$, thus, it is diffeomorphic to the vector space $\sqrt{-1} \mathfrak{m}$.
		\item The tangent space of the dual symmetric space $U^*/U_\Theta$ at the origin is isomorphic to $\sqrt{-1}\mathfrak{m}$ and the Cartan--Killing form in $\sqrt{-1}\mathfrak{m}$ is  $\mathcal{K}(\sqrt{-1}X,\sqrt{-1}Y)=-\mathcal{K}(X,Y) =: \langle X,Y \rangle$ for $X,Y \in \mathfrak{m}$. Since $\mathfrak{m}\subset \mathfrak{u}$, then the Cartan--Killing form in $\sqrt{-1}\mathfrak{m}$ is positive defined. %Let $\mathfrak{u}^*$ be the noncompact Lie algebra of $U^*$ with Cartan decomposition $\mathfrak{u}^* = \mathfrak{u}_\Theta \oplus \sqrt{-1}\mathfrak{m}$. 
		The homogeneous space $U^*/U_\Theta$ admits invariant metrics since $U_\Theta$ is compact. A natural metric in $U^*/U_\Theta$ is that from the restriction of the Cartan--Killing form to $\sqrt{-1}\mathfrak{m}$. Since $\mathfrak{u}^*$ is simple, then the adjoint representation in $\sqrt{-1} \mathfrak{m}$ is irreducible, and the invariant metric is essentially unique, defined by the Cartan--Killing form.
		\item On the one hand, every fiber has the form $u\cdot (H_\Theta + \mathfrak{n}_\Theta^+) = u \cdot (N_\Theta^+ \cdot H_\Theta)$, for $u\in U$. To say that $\mathcal{S}$ is transverse to $H_\Theta + \mathfrak{n}_\Theta^+$  is equivalent to proving that for every $x\in \mathcal{S} \cap u\cdot (H_\Theta + \mathfrak{n}_\Theta^+)$ it must holds
		\begin{equation*}
			T_x (\mathcal{S} ) + T_x(u \cdot (H_\Theta + \mathfrak{n}_\Theta^+)) = T_x (G\cdot H_\Theta),
		\end{equation*}
		for each $u\in U$ such that $\mathcal{S} \cap u\cdot (H_\Theta + \mathfrak{n}_\Theta^+) \neq \emptyset$.
		
		On the other hand, the Lie algebra $\mathfrak{g}$ can be seen as the set of left-invariant fields. Equation
		\begin{equation*}
			\dfrac{d}{dt} \left(\mathrm{Ad}(e^{tX}(Y))\right) \mid_{t=0} = \left[ X, Y \right] , \qquad X,Y\in \mathfrak{g},
		\end{equation*}
		ensures that the tangent space of $\mathcal{S}$ at $e^A \cdot H_\Theta = u \cdot (H_\Theta + X) = u e^Z \cdot H_\Theta$, with $A\in \sqrt{-1}\mathfrak{m} , X,Z \in \mathfrak{n}_\Theta^+$, $u\in U$ and $e^Z \cdot H_\Theta = H_\Theta + X$, is precisely $[e^A \cdot H_\Theta , \sqrt{-1} \mathfrak{m}]$. Moreover, the tangent space to the fiber $u\cdot (H_\Theta + \mathfrak{n}_\Theta^+)$ at $e^A \cdot H_\Theta $ coincides with $[ e^A \cdot H_\Theta, \mathfrak{n}_\Theta^+ ]$. In this manner, the sum of the former two tangent spaces is
		\begin{equation}
			[e^A \cdot H_\Theta , \sqrt{-1} \mathfrak{m}] + [e^A \cdot H_\Theta , \mathfrak{n}_\Theta^+ ]  = [e^A \cdot H_\Theta , \sqrt{-1} \mathfrak{m} + \mathfrak{n}_\Theta^+ ] = [ e^A \cdot H_\Theta , \mathfrak{m}_G ],	
		\end{equation}
		i.e., it is the tangent space to the adjoint orbit $G\cdot H_\Theta$ at $e^A \cdot H_\Theta$. \qedhere
	\end{enumerate}
\end{proof}

\begin{proposition}	\label{lemfibinterS}
	The intersection of $\mathcal{S}$ with the fiber $  H_\Theta + \mathfrak{n}_\Theta^+ $ is the point set $ \{ H_\Theta \}$.
\end{proposition}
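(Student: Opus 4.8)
The plan is to prove the two inclusions, the forward one being immediate: taking $Y=0$ shows $H_\Theta=\mathrm{Ad}(\exp 0)H_\Theta\in\mathcal{S}$, and the zero covector shows $H_\Theta\in H_\Theta+\mathfrak{n}_\Theta^+$, so $\{H_\Theta\}\subseteq\mathcal{S}\cap(H_\Theta+\mathfrak{n}_\Theta^+)$. For the reverse inclusion I would write an arbitrary intersection point, by the definition of $\mathcal{S}$, as $x=\mathrm{Ad}(\exp(\sqrt{-1}Y))H_\Theta$ with $Y\in\mathfrak{m}$, and then isolate the \emph{only} consequence of lying in the fiber that the argument needs: since $\mathfrak{n}_\Theta^+$ has trivial $\mathfrak{z}_\Theta$-component, the $\mathfrak{z}_\Theta$-part of $x$ must equal $H_\Theta$. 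Pairing against $H_\Theta$ and using that $\mathfrak{n}_\Theta^{\pm}$ are sums of root spaces, hence $\mathcal{K}$-orthogonal to $\mathfrak{h}\ni H_\Theta$, this yields the single scalar identity $\mathcal{K}(x,H_\Theta)=\mathcal{K}(H_\Theta,H_\Theta)$.

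The heart of the argument is to exploit the compact real form analytically. Let $\tau$ denote the conjugation of $\mathfrak{g}$ with respect to $\mathfrak{u}$ (as in the proof of Proposition \ref{P}) and equip $\mathfrak{g}$ with the positive-definite Hermitian inner product $\langle Z,W\rangle:=-\mathcal{K}(Z,\tau W)$. Because $Y\in\mathfrak{m}\subset\mathfrak{u}$ is $\tau$-fixed while $\tau(\sqrt{-1}Y)=-\sqrt{-1}Y$, a short computation shows that $T:=\mathrm{ad}(\sqrt{-1}Y)$ is self-adjoint for $\langle\cdot,\cdot\rangle$, so it has real eigenvalues and an orthogonal eigenspace decomposition $H_\Theta=\sum_\mu v_\mu$. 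Rewriting the two Killing-form quantities through $\langle\cdot,\cdot\rangle$, using $\tau H_\Theta=H_\Theta$ together with $\mathrm{Ad}(\exp(\sqrt{-1}Y))=e^{T}$, and observing that $e^{T}$ is self-adjoint (so the relevant pairings are real), I would turn the fiber identity into $\langle H_\Theta,e^{T}H_\Theta\rangle=\|H_\Theta\|^2$, i.e. $\sum_\mu e^{\mu}\|v_\mu\|^2=\sum_\mu\|v_\mu\|^2$. Independently, invariance of $\mathcal{K}$ gives $\langle H_\Theta,TH_\Theta\rangle=\mathcal{K}(H_\Theta,[\sqrt{-1}Y,H_\Theta])=0$, that is $\sum_\mu\mu\|v_\mu\|^2=0$.

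Subtracting the two identities produces $\sum_\mu(e^{\mu}-1-\mu)\|v_\mu\|^2=0$. Since $e^{\mu}-1-\mu\geq 0$ with equality only at $\mu=0$, strict convexity of the exponential forces $v_\mu=0$ for every $\mu\neq0$; hence $TH_\Theta=0$, i.e. $[Y,H_\Theta]=0$. But then $\mathrm{Ad}(\exp H_\Theta)Y=Y$, so $Y$ lies in the $(+1)$-eigenspace $\mathfrak{u}_\Theta$ of $\sigma=\mathrm{Ad}(\exp H_\Theta)$, while $Y\in\mathfrak{m}$ lies in the $(-1)$-eigenspace; therefore $Y\in\mathfrak{u}_\Theta\cap\mathfrak{m}=\{0\}$ and $x=H_\Theta$, which gives $\mathcal{S}\cap(H_\Theta+\mathfrak{n}_\Theta^+)=\{H_\Theta\}$. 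The step I expect to require the most care is the bookkeeping that converts the geometric fiber condition into the clean spectral identity $\langle H_\Theta,e^{T}H_\Theta\rangle=\|H_\Theta\|^2$: one must pass correctly between $\mathcal{K}$ and $\langle\cdot,\cdot\rangle$ through the conjugate-linear $\tau$, and verify the self-adjointness of $T$ from $\tau(\sqrt{-1}Y)=-\sqrt{-1}Y$. A more computational alternative, matching the toy models of Examples \ref{exm}--\ref{Examplecomplexo}, is to expand $e^{\mathrm{ad}(\sqrt{-1}Y)}H_\Theta$ in the $\{-1,0,1\}$-grading furnished by $H_\Theta$ (where $[\mathfrak{n}_\Theta^+,\mathfrak{n}_\Theta^+]=0$) and to show directly that its $\mathfrak{n}_\Theta^-$-component vanishes only when $Y=0$, but the Hermitian-form argument above is shorter and avoids tracking the higher-order brackets.
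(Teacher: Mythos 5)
Your proof is correct, but it follows a genuinely different route from the paper's. The paper proves this proposition by explicit matrix computation in each of the three classical cases $A_l$, $C_l$, $D_l$ of Section \ref{sec:proofmain}: it uses that $\mathcal{S}\subset\mathfrak{u}^*$ and checks, block by block, that the affine subspace $H_\Theta+\mathfrak{n}_\Theta^+$ meets the real form $\mathfrak{u}^*$ only at $H_\Theta$ (equations \eqref{interfibA}, \eqref{interfibC}, \eqref{interfibD}). You instead give a uniform, type-free argument: the fiber condition plus $\mathcal{K}$-orthogonality of root spaces to $\mathfrak{h}$ yields the scalar identity $\mathcal{K}\bigl(e^{T}H_\Theta,H_\Theta\bigr)=\mathcal{K}(H_\Theta,H_\Theta)$ with $T=\mathrm{ad}(\sqrt{-1}Y)$ self-adjoint for the positive-definite form $-\mathcal{K}(\cdot,\tau\cdot)$, and the strict convexity $e^{\mu}\geq 1+\mu$ combined with $\langle TH_\Theta,H_\Theta\rangle=0$ forces $TH_\Theta=0$, hence $e^{T}H_\Theta=H_\Theta$ (your final step $Y\in\mathfrak{u}_\Theta\cap\mathfrak{m}=\{0\}$ is more than the statement requires, but harmless). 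Your approach buys generality and economy: it covers all three cases at once, never invokes the classification, and in fact only uses $Y\in\sqrt{-1}\,\mathfrak{u}$ rather than $Y\in\sqrt{-1}\,\mathfrak{m}$, so it is really a statement about polar directions versus nilpotent fiber directions. What the paper's computations buy in exchange is the explicit identification of $\mathfrak{u}^*$ in each case (via $\mathfrak{su}(p,q)$, the Cayley transform, etc.), which is reused elsewhere, together with the slightly stronger fact $(H_\Theta+\mathfrak{n}_\Theta^+)\cap\mathfrak{u}^*=\{H_\Theta\}$. The one point you should make fully explicit when writing this up is the passage from the fiber condition to $\mathcal{K}(x,H_\Theta)=\mathcal{K}(H_\Theta,H_\Theta)$: it is cleaner to say directly that $x-H_\Theta\in\mathfrak{n}_\Theta^+$ and $\mathcal{K}(\mathfrak{g}_\alpha,\mathfrak{h})=0$ for every root $\alpha$, rather than to argue through $\mathfrak{z}_\Theta$-components.
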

\begin{proof}
	In Section \ref{sec:proofmain}, we shall compute the intersection of the submanifold $\mathcal{S}$ with the fiber $H_\Theta + \mathfrak{n}_\Theta^+$ in the three cases of symmetric flag spaces. The results will be evidenced in \eqref{interfibA}, \eqref{interfibC}, and \eqref{interfibD}. 
\end{proof}

The symmetry $\tilde{ \sigma}$ in the submanifold $\mathcal{S}$ is given by  
\begin{equation}
	\begin{array}{clcl}
		\tilde{\sigma} :& \mathcal{S} & \rightarrow & \mathcal{S} \\
		\quad & e^{A} \cdot H_\Theta & \mapsto & \mathrm{C}_{e^{H_\Theta}} (e^A) \cdot H_\Theta,
	\end{array}
\end{equation} 
for $A\in \sqrt{-1} \mathfrak{m}$. Then, $ \tilde{ \sigma} (e^A 
 \cdot H_\Theta )  = \exp (\mathrm{Ad}(e^{H_\Theta} A) \cdot H_\Theta) = \exp (\sigma (A))\cdot H_\Theta  = \exp (-A) \cdot H_\Theta$, since $(\mathfrak{u}^* , \mathfrak{u}_\Theta, \sigma)$, $\sigma=$Ad($e^{H_\Theta})$ is a symmetric Lie algebra. Once more, we reinforce using the same notation $\sigma$ for the automorphism in the Lie group and algebra. We prove:
 
\begin{theorem} \label{main2}
	Suppose that, for some $u\in U$, the fiber $u \cdot (H_\Theta  + \mathfrak{n}^+_\Theta)$ intersects the submanifold $\mathcal{S}$. Then, the fiber $\sigma (u)\cdot (H_\Theta + \mathfrak{n}_\Theta^+)$ intersects $\mathcal{S}$.
\end{theorem}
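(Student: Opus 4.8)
The plan is to leverage the symmetry $\tilde\sigma$ of $\mathcal{S}$ recorded just before the statement, together with the observation that the inner automorphism $\sigma=\mathrm{Ad}(e^{H_\Theta})$ also preserves the affine fibers $H_\Theta+\mathfrak{n}_\Theta^+$. I would begin by recording two elementary structural facts about $\sigma=\mathrm{Ad}(e^{H_\Theta})$. Since $H_\Theta\in\mathfrak{h}$ is semisimple, $\mathrm{Ad}(e^{H_\Theta})$ acts on each root space $\mathfrak{g}_\alpha$ by the nonzero scalar $e^{\alpha(H_\Theta)}$; hence it preserves every root space, and therefore every sum of root spaces, in particular $\mathfrak{n}_\Theta^+=\mathfrak{g}(\Pi^+\setminus\langle\Theta\rangle^+)$. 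Moreover $\sigma(H_\Theta)=H_\Theta$ because $[H_\Theta,H_\Theta]=0$. Consequently
\begin{equation*}
	\sigma\!\left(H_\Theta+\mathfrak{n}_\Theta^+\right)=H_\Theta+\mathfrak{n}_\Theta^+,
\end{equation*}
so $\sigma$ carries fibers to fibers. Finally, the computation preceding the statement shows that $\tilde\sigma$ is precisely the restriction of $\sigma=\mathrm{Ad}(e^{H_\Theta})$ to $\mathcal{S}$, and that $\sigma$ maps $\mathcal{S}$ into itself via $e^A\cdot H_\Theta\mapsto e^{-A}\cdot H_\Theta$ for $A\in\sqrt{-1}\mathfrak{m}$.

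With these in hand, the argument is short. By hypothesis the intersection $\mathcal{S}\cap u\cdot(H_\Theta+\mathfrak{n}_\Theta^+)$ is nonempty, so I would pick a point $x$ in it and apply $\sigma$. On one hand, $\sigma(x)\in\mathcal{S}$ by the paragraph above. On the other hand, writing $x=\mathrm{Ad}(u)(H_\Theta+X)$ with $X\in\mathfrak{n}_\Theta^+$ and using the conjugation identity $e^{H_\Theta}u=\sigma(u)\,e^{H_\Theta}$ (that is, $\sigma(u)=C_{e^{H_\Theta}}(u)$), I get
\begin{equation*}
	\sigma(x)=\mathrm{Ad}(e^{H_\Theta}u)(H_\Theta+X)=\mathrm{Ad}(\sigma(u))\,\mathrm{Ad}(e^{H_\Theta})(H_\Theta+X)=\mathrm{Ad}(\sigma(u))\left(H_\Theta+\sigma(X)\right).
\end{equation*}
Since $\sigma(X)\in\mathfrak{n}_\Theta^+$, this exhibits $\sigma(x)$ as an element of the fiber $\sigma(u)\cdot(H_\Theta+\mathfrak{n}_\Theta^+)$.

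Combining the two displays, $\sigma(x)$ lies in $\mathcal{S}\cap\sigma(u)\cdot(H_\Theta+\mathfrak{n}_\Theta^+)$, which is therefore nonempty, giving the claim. The argument is essentially a two-line manipulation of the $\mathrm{Ad}$-action, and the only point I would take care to justify explicitly is the structural fact that $\sigma$ preserves the fiber $H_\Theta+\mathfrak{n}_\Theta^+$ — equivalently, that the scaling of root spaces by $\mathrm{Ad}(e^{H_\Theta})$ never leaves $\mathfrak{n}_\Theta^+$. This is immediate from the root-space decomposition, but it is the hinge on which the whole proof turns, since without it the image $\sigma(u)\cdot(H_\Theta+\mathfrak{n}_\Theta^+)$ would not even be a fiber of the bundle. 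Everything else reduces to the conjugation identity $e^{H_\Theta}u=\sigma(u)\,e^{H_\Theta}$ and the already-established invariance of $\mathcal{S}$ under $\sigma$.
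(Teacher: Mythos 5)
Your proof is correct and follows essentially the same route as the paper's: both take a point of $\mathcal{S}\cap u\cdot(H_\Theta+\mathfrak{n}_\Theta^+)$, apply the symmetry $\sigma=\mathrm{Ad}(e^{H_\Theta})$ (the paper phrases this as $\tilde\sigma$, which is just $\sigma$ restricted to $\mathcal{S}$), and use the conjugation identity $\mathrm{Ad}(e^{H_\Theta})\mathrm{Ad}(u)=\mathrm{Ad}(\sigma(u))\mathrm{Ad}(e^{H_\Theta})$ together with $\sigma$-invariance of $\mathcal{S}$ and of $H_\Theta+\mathfrak{n}_\Theta^+$ to land in the fiber over $\sigma(u)$. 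Your explicit justification that $\sigma$ preserves $\mathfrak{n}_\Theta^+$ via the root-space decomposition is a slightly more careful rendering of the step the paper handles by noting $Z\in\mathfrak{m}_G$ so that $\sigma(e^Z)=e^{-Z}$, but the argument is the same.
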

\begin{proof}
Each element $e^A  \cdot H_\Theta$ of $\mathcal{S}$ is an element of some fiber of the cotangent bundle and has the form $u\cdot ( H_\Theta + X ) = ue^Z \cdot H_\Theta$, for some $u\in U$ and $Z, X\in \mathfrak{n}_\Theta^+$, since $\mathcal{S} \subset G\cdot H_\Theta$. Hence,
\begin{align*}
\tilde{ \sigma } (e^A \cdot H_\Theta) =& \tilde{ \sigma } (u e^Z \cdot H_\Theta) \nonumber \\
	e^{-A} \cdot H_\Theta =& \sigma (u) \sigma (e^Z) \cdot H_\Theta \nonumber \\
	\qquad =& \sigma (u) e^{-Z} \cdot H_\Theta,\quad Z\in \mathfrak{m}_G \nonumber \\
	\qquad \in & \sigma (u) \cdot (H_\Theta + \mathfrak{n}^+_\Theta). \qedhere
\end{align*} \end{proof}

However, not every fiber intersects $\mathcal{S}$. The following theorem indicates some fibers that do not intersect $\mathcal{S}$.

\begin{theorem}
	The fiber $u\cdot (H_\Theta + \mathfrak{n}_\Theta^+)$ does not intersect $\mathcal{S}$ if $u\cdot \mathfrak{n}_\Theta^+ \subset \mathfrak{u}^*$ and $u\cdot H_\Theta \notin \mathfrak{u}_\Theta$.
\end{theorem}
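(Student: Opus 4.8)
The plan is to argue by contradiction, exploiting the fact that the entire submanifold $\mathcal{S}$ is trapped inside the non-compact dual form $\mathfrak{u}^*$. First I would record the key containment $\mathcal{S}\subset\mathfrak{u}^*$: indeed $\mathcal{S}=\mathrm{Ad}(e^{\sqrt{-1}\mathfrak{m}})H_\Theta$, and since $\sqrt{-1}\mathfrak{m}\subset\mathfrak{u}^*$ we have $e^{\sqrt{-1}\mathfrak{m}}\subset U^*$; as $\mathfrak{u}^*$ is a subalgebra it is $\mathrm{Ad}(U^*)$-invariant, and $H_\Theta\in\mathfrak{u}_\Theta\subset\mathfrak{u}^*$ (it centralizes itself, being an element of the Cartan sub-algebra defining $\mathfrak{u}_\Theta=\mathfrak{z}_\mathfrak{u}(H_\Theta)$). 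Hence every point of $\mathcal{S}$ lies in $\mathfrak{u}^*$.

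Next, I would suppose for contradiction that the fiber $\mathrm{Ad}(u)(H_\Theta+\mathfrak{n}_\Theta^+)$ meets $\mathcal{S}$ at a point $p$. Then $p\in\mathfrak{u}^*$ by the previous step, and by definition of the fiber $p=\mathrm{Ad}(u)H_\Theta+\mathrm{Ad}(u)Y$ for some $Y\in\mathfrak{n}_\Theta^+$. The hypothesis $\mathrm{Ad}(u)\mathfrak{n}_\Theta^+\subset\mathfrak{u}^*$ gives $\mathrm{Ad}(u)Y\in\mathfrak{u}^*$, whence
\[
\mathrm{Ad}(u)H_\Theta = p-\mathrm{Ad}(u)Y\in\mathfrak{u}^*.
\]
On the other hand $u\in U$ and $H_\Theta\in\mathfrak{u}$, so $\mathrm{Ad}(u)$ preserves the compact real form and $\mathrm{Ad}(u)H_\Theta\in\mathfrak{u}$ as well. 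Thus $\mathrm{Ad}(u)H_\Theta\in\mathfrak{u}\cap\mathfrak{u}^*$.

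The crucial step is then to identify this intersection, namely $\mathfrak{u}\cap\mathfrak{u}^*=\mathfrak{u}_\Theta$. Writing an element of $\mathfrak{u}^*$ as $k+\sqrt{-1}v$ with $k\in\mathfrak{u}_\Theta\subset\mathfrak{u}$ and $v\in\mathfrak{m}\subset\mathfrak{u}$, and using the real direct-sum decomposition $\mathfrak{g}=\mathfrak{u}\oplus\sqrt{-1}\mathfrak{u}$, membership in $\mathfrak{u}$ forces the $\sqrt{-1}\mathfrak{u}$-component $\sqrt{-1}v$ to vanish, i.e. $v=0$; hence $\mathfrak{u}\cap\mathfrak{u}^*\subseteq\mathfrak{u}_\Theta$, the reverse inclusion being immediate. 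Combining this with the preceding step yields $\mathrm{Ad}(u)H_\Theta\in\mathfrak{u}_\Theta$, contradicting the hypothesis $\mathrm{Ad}(u)H_\Theta\notin\mathfrak{u}_\Theta$, and the fiber cannot meet $\mathcal{S}$. The only place requiring genuine care — and hence the main (if modest) obstacle — is the identity $\mathfrak{u}\cap\mathfrak{u}^*=\mathfrak{u}_\Theta$, where the compact/non-compact duality $\mathfrak{m}\leftrightarrow\sqrt{-1}\mathfrak{m}$ is essential; everything else is bookkeeping with the $\mathrm{Ad}$-invariance of $\mathfrak{u}$ and $\mathfrak{u}^*$.
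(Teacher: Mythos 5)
Your argument is correct and is essentially the paper's own proof: both assume a point $u\cdot(H_\Theta+X)$ lies in $\mathcal{S}\subset\mathfrak{u}^*$, subtract $u\cdot X\in\mathfrak{u}^*$ to conclude $u\cdot H_\Theta\in\mathfrak{u}\cap\mathfrak{u}^*=\mathfrak{u}_\Theta$, and derive a contradiction. The only difference is that you spell out the justifications of $\mathcal{S}\subset\mathfrak{u}^*$ and of the identity $\mathfrak{u}\cap\mathfrak{u}^*=\mathfrak{u}_\Theta$ via the decomposition $\mathfrak{g}=\mathfrak{u}\oplus\sqrt{-1}\,\mathfrak{u}$, which the paper uses without comment.
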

\begin{proof}
 Let $u\cdot (H_\Theta + \mathfrak{n}_\Theta^+)$ a fiber such that $u\cdot \mathfrak{n}_\Theta^+  \subset \mathfrak{u}^*$ and $u \cdot H_\Theta \notin \mathfrak{u}_\Theta$. Suppose that $u\cdot (H_\Theta + \mathfrak{n}_\Theta ^+) \cap \mathcal{S} \neq \varnothing.$ The fiber is contained in $u\cdot H_\Theta + \mathfrak{u}^*$. If $u \cdot( H_\Theta + X ) \in \mathcal{S} \subset \mathfrak{u}^*$, with $X\in \mathfrak{n}_\Theta^+$, then $u\cdot H_\Theta \in \mathfrak{u}^* \cap \mathfrak{u} = \mathfrak{u}_\Theta$ and this is a contradiction.  Hence
\begin{equation*}
	(u\cdot (H_\Theta + \mathfrak{n}_\Theta^+)) \cap \mathcal{S} = \varnothing. \qedhere
\end{equation*}
\end{proof}

\begin{theorem}
	Let	$u\cdot (H_\Theta + X) \in \mathcal{S} \subset T^*\mathbb{F}_\Theta$ be an element  that  satisfies $u\cdot \mathfrak{n}_\Theta^+ \cap \mathfrak{u}^* = \{ 0 \}$. Then, the  intersection of the submanifold $\mathcal{S}$ with the fiber $ u\cdot ( H_\Theta + \mathfrak{n}_\Theta^+ )$ is $\{ u\cdot (H_\Theta + X) \}$.
\end{theorem}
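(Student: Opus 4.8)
The plan is to reduce the whole statement to a one-line linear-algebra argument by exploiting the containment $\mathcal{S}\subset\mathfrak{u}^*$ together with the transversality hypothesis $u\cdot\mathfrak{n}_\Theta^+\cap\mathfrak{u}^*=\{0\}$. First I would note that the point $u\cdot(H_\Theta+X)$ already belongs to $\mathcal{S}\cap u\cdot(H_\Theta+\mathfrak{n}_\Theta^+)$ by the hypothesis of the statement, so only the \emph{uniqueness} of the intersection point remains to be established.

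Next I would take an arbitrary second point of the intersection. Any element of the fiber $u\cdot(H_\Theta+\mathfrak{n}_\Theta^+)$ is of the form $u\cdot(H_\Theta+X')$ with $X'\in\mathfrak{n}_\Theta^+$, and if it also lies in $\mathcal{S}$ then, since $\mathcal{S}\subset\mathfrak{u}^*$, both $u\cdot(H_\Theta+X)$ and $u\cdot(H_\Theta+X')$ belong to the real vector subspace $\mathfrak{u}^*$. Subtracting and using the linearity of $\mathrm{Ad}(u)$, their difference $\mathrm{Ad}(u)(X-X')=u\cdot(X-X')$ lies in $\mathfrak{u}^*$ as well.

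On the other hand, $X-X'\in\mathfrak{n}_\Theta^+$ forces $u\cdot(X-X')\in u\cdot\mathfrak{n}_\Theta^+$, so that $u\cdot(X-X')\in(u\cdot\mathfrak{n}_\Theta^+)\cap\mathfrak{u}^*=\{0\}$ by hypothesis. Because $\mathrm{Ad}(u)$ is invertible, this yields $X=X'$, hence the two points coincide, and therefore $\mathcal{S}\cap u\cdot(H_\Theta+\mathfrak{n}_\Theta^+)=\{u\cdot(H_\Theta+X)\}$, as claimed.

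Since the computation is trivial, I expect no genuine obstacle; the one point that deserves care is the justification of $\mathcal{S}\subset\mathfrak{u}^*$. This follows from Theorem \ref{main1}$(a)$: one has $\mathcal{S}=\mathrm{Ad}(e^{\sqrt{-1}\mathfrak{m}})H_\Theta=\mathrm{Ad}(U^*)H_\Theta$ with $H_\Theta\in\mathfrak{u}_\Theta\subset\mathfrak{u}^*$, and $\mathrm{Ad}(U^*)$ preserves the subalgebra $\mathfrak{u}^*$, exactly the inclusion already invoked in the preceding theorem. Everything else is purely formal.
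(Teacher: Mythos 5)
Your argument is correct and is essentially identical to the paper's own proof: take a second intersection point $u\cdot(H_\Theta+Y)$, subtract, and observe that $u\cdot(Y-X)\in(u\cdot\mathfrak{n}_\Theta^+)\cap\mathfrak{u}^*=\{0\}$ because $\mathcal{S}\subset\mathfrak{u}^*$. Your added remark justifying the inclusion $\mathcal{S}\subset\mathfrak{u}^*$ via Theorem \ref{main1}$(a)$ is a harmless (and welcome) elaboration of a step the paper leaves implicit.
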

\begin{proof}

	Consider an element $u \cdot (H_\Theta + Y)$ in the intersection $\mathcal{S} \cap u\cdot (H_\Theta + \mathfrak{n}_\Theta^+)$. Then $u \cdot (H_\Theta + Y) - u \cdot (H_\Theta + X) = u\cdot (Y-X) \in u\cdot \mathfrak{n}_\Theta^+ \cap \mathfrak{u}^*   = \{ 0 \}$, since $\mathcal{S} \subset \mathfrak{u}^*$. Hence $X=Y$. 
\end{proof}

\begin{example}[Continuation of Example \ref{exm}]	\label{exemplorealparte2} 
	
		The symmetric part of the Cartan decomposition of $\mathfrak{sl}(2,\mathbb{R})$ is generated by the matrices $A$ and $B$ (see equation \eqref{basedesl}). Thus, the submanifold $\mathcal{S}$ is represented by 
	\begin{align}
		\mathrm{Ad}(e^{tA})B=:& \exp (tA)\cdot B \nonumber \\
		=& \left\{ \left( \begin{array}{cc}
			\cosh t &  \sinh t \\ \sinh t & \cosh t
		\end{array} \right) \cdot B = 0 A + \cosh (2t) B - \sinh (2t) C : t\in \mathbb{R} \right\} \nonumber \\
		=& \{ (0,y,z) \in \mathbb{R}^3 : y^2-z^2=1, \; y>0 \} \label{hiperbola}
	\end{align}
	and it is the intersection of the hyperboloid $x^2+y^2-z^2=1$ with the semi-plane $\{ x=0, y>0 \}$ contained in the vector space $\mathrm{span}_\mathbb{R} \{ B,C \}$. Here, the submanifold $\mathcal{S}$ is symmetric for the abelian vector subspace of $\mathfrak{sl}(2,\mathbb{R})$ generated by $B$. Hence, we have three symmetric spaces: The hyperboloid, given by equation \eqref{hyperboloide}, the $1$-sphere, and a branch of the hyperbola \eqref{hiperbola}, contained in the plane $YZ$.
	
	\begin{figure}[htb]
		\centering
		\includegraphics[width=14.5cm]{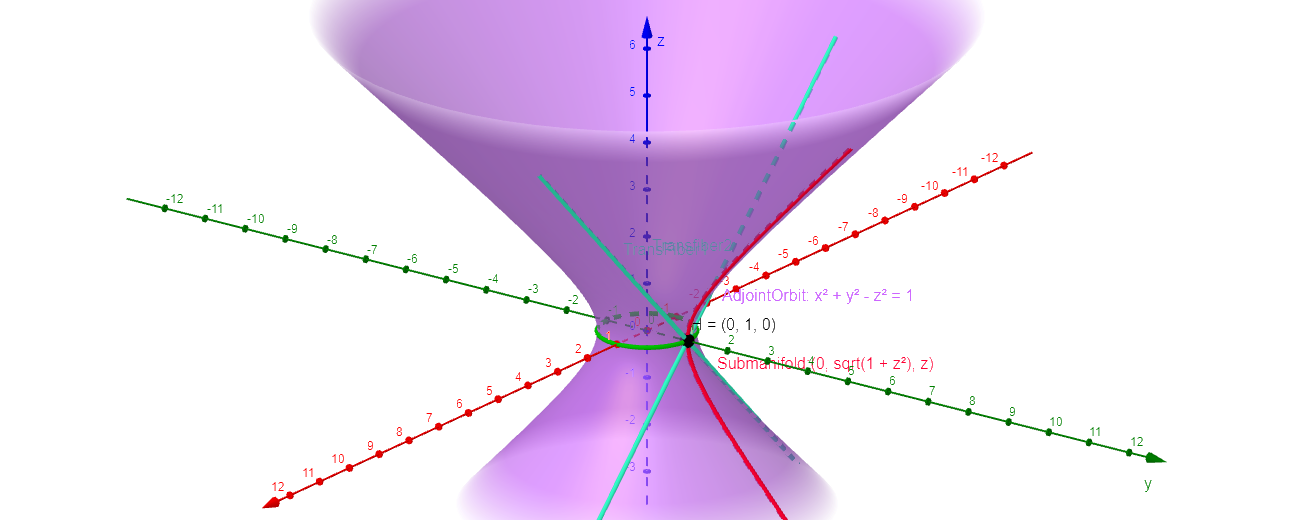}
		\caption{Real adjoint orbit.}
		\label{fig1RAO}
	\end{figure}
	
	The intersection of $\mathcal{S}$ with the fiber $k\cdot (B+ \mathfrak{n})$ is the set of the vectors $xA+yB+zC$ in \eqref{fiberexmreal}, such that $x=0$ and $y>0$. These conditions are equivalent to the system
	\begin{equation}
		\left\{ \begin{array}{c}
			-2\sin (2t) + r \cos (2t) =0 \\
			2	\cos (2t) + r \sin (2t) >0
		\end{array} \right. , t\in [0,\pi] .
	\end{equation}
	This system has solution only for $t\in ( -\frac{\pi}{4} ,\frac{\pi}{4} )$. Thus, the only fibers intersecting $\mathcal{S}$ are $k\cdot (B+ \mathfrak{n})$ with $k$ as in \eqref{kexm} and $t\in ( -\frac{\pi}{4} ,\frac{\pi}{4} )$. The intersection is the only vector satisfying the equation \eqref{elementodelafibra} with $r=2\tan (2t)$. Furthermore, note that when  $k$ is equal to  
	\begin{equation}
		\frac{1}{\sqrt{2}}	\left( \begin{array}{cc}
			1 & 1 \\ -1 & 1
		\end{array} \right) \; \mathrm{or} \; \frac{1}{\sqrt{2}}	\left( \begin{array}{cc}
			1 & -1 \\ 1 & 1
		\end{array} \right) ,
	\end{equation} then $k\cdot B$ is equal to $-A$ and $A$, respectively. Hence, the fibers passing by $\pm A$ do not intersect the submanifold $\mathcal{S}$.
	
	We can see these results in Figure \ref{fig2FAS}. All the lines in this figure are fibers, and the sky blue lines are the fibers passing by $\pm A$. We can see how the fibers passing by the points in the (green) semicircle of radius one contained in $\{ (x,y,0): x\in \mathbb{R}, y>0 \}$ intersect the submanifold $\mathcal{S}$ represented by the red curve.
	
	\begin{figure}[!h]	\label{Fibrasfigura}
		\centering
		\includegraphics[width=8cm, height=7.5cm]{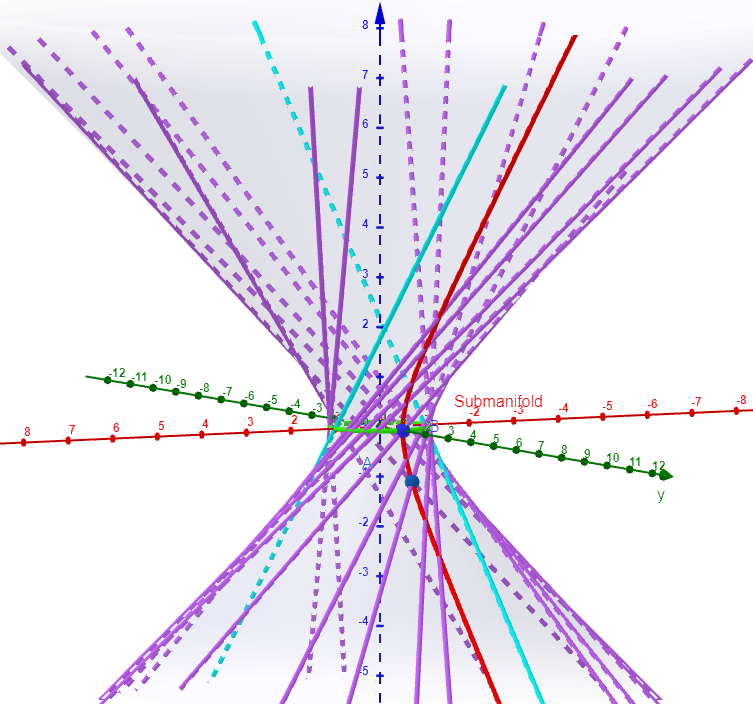}
		\caption{Fibers of $T^*\mathrm{S^1}$ and the submanifold $\mathcal{S}$.}
		\label{fig2FAS}
	\end{figure}
\end{example}
\pagebreak

\begin{example}[Continuation of Example \ref{Examplecomplexo}]
	
The manifold $\mathcal{S} :=(\exp\mathfrak{s})\cdot H_\Theta$, with \[\mathfrak{s}= \sqrt{-1}\mathfrak{m} = \left\lbrace \left( \begin{array}{cc}
		0&b\\ \bar{b} &0
	\end{array} \right) : b\in \mathbb{C} \right\rbrace,\] is 
	\begin{eqnarray*}
		\mathcal{S}=	e^{\sqrt{-1}\mathfrak{m}}  \cdot H_\Theta = &\left\lbrace  \dfrac{\sqrt{-1}\pi}{2} \left( \begin{array}{cc}
			\cosh (2 \left|  b \right| ) & - \frac{b}{\left| b\right| } \sinh (2 \left| b \right| ) \\
			\frac{\bar{b}}{\left|  b \right| } \sinh (2 \left| b\right| ) & - \cosh (2 \left| b \right| ) 
		\end{array} \right) : b\in \mathbb{C} \right\rbrace \label{ESE} 
		\end{eqnarray*}
	The diagonal entries of the matrices in $\mathcal{S}$ are complex numbers with real parts equal to zero.
		
		The fibers of the cotangent bundle are $u\cdot (H_\Theta + \mathfrak{n}^+)$, with $u = \left( \begin{array}{cc}
			\alpha & \beta \\ -\bar{\beta} &\bar{\alpha}
		\end{array} \right) \in \mathrm{SU}(2)$ and each fiber is the set
		\begin{equation*}
			\left\lbrace \left( \begin{array}{cc}
				\frac{\sqrt{-1}\pi}{2} (2 \left|  \alpha \right| ^2 -1) + x\alpha \bar{\beta} & -\sqrt{-1}\pi \alpha \beta + x\alpha^2 \\
				-\sqrt{-1}\pi \bar{\alpha} \bar{\beta}-x\bar{\beta}^2 & -\frac{\sqrt{-1}\pi}{2}(2 \left|  \alpha \right| ^2 -1)-x\alpha \bar{\beta}
			\end{array} \right) : x\in \mathbb{C} \right\rbrace .
		\end{equation*}
		To find out which fibers intersect $\mathcal{S}$, we are looking for elements in $u\cdot (H_\Theta + \mathfrak{n}^+)$ such that for each $\alpha$ and $\beta$ as above, we must find if there exists $x\in \mathbb{C}$ satisfying the following conditions:
		\begin{align}
			&	\bar{x}\bar{\alpha}\beta = -x\alpha \bar{\beta} \label{cond1} \\
			&	\sqrt{-1}\pi \bar{\alpha}\bar{\beta} + \bar{x}\bar{\alpha}^2 = -\sqrt{-1}\pi \bar{\alpha}\bar{\beta} - x\bar{\beta}^2 	\label{cond2} \\
			&	\frac{\pi}{2}(2\left| \alpha \right| ^2 -1) + \mathrm{Im} (x\alpha \bar{\beta}) \geq \frac{\pi}{2} 	\label{cond3}
		\end{align}
		
		The first and second conditions come from the form of the matrices in $\mathfrak{su}(1,1)$. The third condition is because, from \eqref{ESE}, the imaginary part of the first element in the diagonal of matrices in $\mathcal{S}$ is greater than $\frac{\pi}{2}$.
		
		If $\beta=0$, then $u\in \mathrm{S}(\mathrm{U}(1)\times \mathrm{U}(1))$ and $\alpha \neq 0$, thus the fiber is $(H_\Theta + \mathfrak{n}^+)$. The unique $x\in \mathbb{C}$ that satisfies the three conditions above is $x=0$. Hence, the intersection $(H_\Theta+\mathfrak{n}^+)\cap \mathcal{S}$ is $\{ H_\Theta \}$.
		
		If $\alpha =0$, then $ \left| \beta \right| =1$ and the fiber is $(H_\Theta + \mathfrak{n}^-)$. The equation \eqref{cond2} tells us that $x=0$. Putting $x=0$ in \eqref{cond3}, we have a contradiction. Hence, the intersection $(H_\Theta +\mathfrak{n}^-) \cap \mathcal{S}$ is empty.
		
		If $\alpha \neq 0 \neq \beta$, from equations \eqref{cond1} and \eqref{cond2}, we have the new condition
		\begin{equation*}
			2\sqrt{-1}\pi \bar{\alpha } \beta = x (2 \left|  \alpha \right| ^2 - 1).
		\end{equation*}

	Here, we have two cases:
		\begin{itemize}
			\item[a)] If $2 \left|  \alpha \right| ^2 - 1 = 0$, then $2\sqrt{-1}\pi \bar{\alpha}\beta = 0$. This is a contradiction, since $\bar{\alpha} \beta \neq 0 $. Hence, the fibers $u\cdot (H_\Theta + \mathfrak{n}^+)$ with
			\begin{equation*}
				u\in \left\lbrace \left( \begin{array}{cc}
					\alpha & \beta \\- \bar{\beta} & \bar{\alpha}
				\end{array} \right) :  \lvert \alpha \rvert ^2= \lvert \beta \rvert ^2 = \frac{1}{2} , \alpha , \beta \in \mathbb{C} 
				\right\rbrace ,
			\end{equation*} has empty intersection with the manifold $\mathcal{S}$.
			
			\item[b)] If $2 \left|  \alpha \right| ^2 - 1 \neq 0$, from equations \eqref{cond1} and \eqref{cond2}, we find the complex number
			\begin{equation*}
				x= \frac{2\pi\sqrt{-1}}{2 \left|  \alpha \right| ^2 - 1} \bar{\alpha} \beta .
			\end{equation*}
		Condition \eqref{cond3}, with $x$ as above, leads to the following condition
			\begin{equation*}
				\frac{1}{2} <  \left|  \alpha \right| ^2  < 1 .
			\end{equation*}
		\end{itemize}

		Hence, $(u\cdot (H_\Theta +\mathfrak{n}^+) ) \cap \mathcal{S}$ is non-empty if 
		\begin{equation}	\label{notempexm}
			u\in \left\lbrace \left( \begin{array}{cc}
				\alpha & \beta \\- \bar{\beta} & \bar{\alpha}
			\end{array} \right) : \frac{1}{2} < \lvert \alpha \rvert ^2 \leq 1, \lvert \alpha \rvert ^2 +  \lvert \beta \rvert ^2 = 1 , \alpha , \beta \in \mathbb{C} 
			\right\rbrace.
		\end{equation}
		Furthermore, 
		\begin{equation*}
			(u\cdot (H_\Theta +\mathfrak{n}^+) ) \cap \mathcal{S} = \left\lbrace \frac{\sqrt{-1}\pi}{2 \left|  \alpha \right| ^2 - 1} \left( \begin{array}{cc}
				1 & \alpha \beta \\- \bar{\alpha} \bar{\beta} & -1
			\end{array} \right) 
			\right\rbrace.
		\end{equation*}
		
		Considering the following basis for $\mathfrak{u}$:
		\begin{equation*}
			e_1 = \left( \begin{array}{cc}
				0&1\\1&0
			\end{array} \right) ,\; e_2 = \left( \begin{array}{cc}
				\sqrt{-1}&0\\0&-\sqrt{-1}
			\end{array} \right) , \; e_3 = \left( \begin{array}{cc}
				0& \sqrt{-1}\\ \sqrt{-1}&0
			\end{array} \right) ,
		\end{equation*} 
		the matrices in $u\cdot H_\Theta$, for $u$ as in equation \eqref{notempexm}, have the decomposition
		\begin{equation*}
			\pi \mathrm{Im}(\alpha \beta) e_1 + \frac{\pi}{2} (2 \left|  \alpha \right| ^2 - 1) e_2 - \pi \mathrm{Re}(\alpha \beta) e_3, 
		\end{equation*} with the second component greater than zero and 
		\begin{equation*}
			\pi^2 [ \mathrm{Im}(\alpha \beta) ]^2 + \frac{\pi^2}{4} (2 \left|  \alpha \right| ^2 - 1)^2 + \pi ^2 [ \mathrm{Re}(\alpha \beta) ]^2 = \frac{\pi^2}{4}.
		\end{equation*}
		Hence, $u\cdot H_\Theta$, with $u$ as in \eqref{notempexm}, is the semi-sphere $\{ (a,b,c) \in \mathfrak{su}(2) : a^2+b^2+c^2= \frac{\pi^2}{4},\; b>0 \}$. We then conclude that $\mathcal{S}$ is projected into the flag as the semi-sphere through the fibers.
		
		Since the manifold $\mathcal{S}$ is contained in \[\mathfrak{su}(1,1) = \left\lbrace \left( \begin{array}{cc}
			\alpha & \beta \\ \bar{\beta} & -\alpha
		\end{array} \right) : \beta \in \mathbb{C}, \bar{\alpha}= -\alpha  \right\rbrace,\] we can express each of its elements as a combination of the matrices 
	\begin{equation*}
		 A= \left(  \begin{array}{cc}
			0&1\\1&0
		\end{array} \right) , B= \left( \begin{array}{cc}
			\sqrt{-1}&0\\ 0&-\sqrt{-1}
		\end{array}\right) , C= \left( \begin{array}{cc}
			0&\sqrt{-1}\\-\sqrt{-1}&0
		\end{array} \right).
	\end{equation*}
	 In that base, the manifold $\mathcal{S}$  is the part of the two-sheeted hyperboloid $x^2 + z^2 -y^2 = -\frac{\pi^2}{4}$ with positive $Y$-component:
		\begin{equation*}
			\mathcal{S} = \left\lbrace  \frac{\pi}{2} \sin t \sinh(2r) \cdot A + \frac{\pi}{2} \cosh (2r) \cdot B - \frac{\pi}{2} \cos t \sinh(2r) \cdot C : t\in [0,2\pi] , r \geq 0 \right\rbrace .
		\end{equation*}
	
		Observe that the set of matrices $xA + yB + zC \in \mathfrak{su}(1,1)$ with determinant equal to $\mathrm{det}(H_\Theta)= \frac{\pi^2}{4}$ is the hyperboloid $x^2 + z^2 - y^2 = -\frac{\pi^2}{4}$ and has $\left| \mathcal{W} \right| $ connected components where $\mathcal{W}$ is the Weyl group of $\mathfrak{sl}(2,\mathbb{R})$. Also, the flag is maximal in this example, and $H_\Theta$ is a regular element. 
		
		Let $w_0 \in \mathcal{W}$ be the principal involution, then $w_0 \cdot H_\Theta = -H_\Theta$, and the adjoint orbit $\mathrm{Ad}(\mathrm{SU}(1,1)) (-H_\Theta) = -\mathrm{Ad}(\mathrm{SU}(1,1)) H_\Theta$ is the sheet of the hyperboloid $x^2 + z^2 - y^2 = -\frac{\pi^2}{4}$ such that $y<0$. Thus,
		\begin{align*}
			\{ M \in \mathfrak{sl}(2,\mathbb{R}): \mathrm{det}(M)= \frac{\pi^2}{4} \} =& \{  (x,y,z) \in \mathbb{R}^3 : x^2 + z^2 - y^2 = -\frac{\pi^2}{4} \} \\
			 =& \mathrm{Ad}(\mathrm{SU}(1,1)) H_\Theta \cup \mathrm{Ad}(\mathrm{SU}(1,1)) w_0H_\Theta.
		\end{align*}
\end{example}

\section{Symplectic forms and Lagrangian submanifolds}

Recall that the cotangent bundle $T^*M$ of a manifold $M$ is naturally a symplectic manifold: one can always define a \emph{canonical symplectic structure} on $T^*M$ given $d\theta$, where $\theta$ is the \emph{tautologically} $1$-form $\theta$ on $T^*M$.

As Theorem \ref{main1} has proved, the cotangent bundle to a symmetric flag space also happens to be symmetric. Moreover, once we are dealing with semisimple Lie algebras, let us define a $2$-form in $\mathfrak{g}$ by
\begin{equation}	\label{hermitianform}
	\mathcal{H}_\sigma(X,Y) = -\mathcal{K}(X,\sigma Y) \qquad X,Y\in \mathfrak{g}.
\end{equation}

Equation \eqref{hermitianform} defines a hermitian form in $\mathfrak{g}$, thus defining a symplectic form in $\mathfrak{g}$:
\begin{equation}	\label{symplecticform}
	\omega (X,Y) := -\mathrm{Im} \mathcal{H}_\sigma (X,Y) \qquad X,Y \in \mathfrak{g}
\end{equation}

Since Theorem \ref{orbitafibradoSM} ensures that symmetric flag space $\mathbb{F}_\Theta$ can be considered as the adjoint orbit $\mathrm{Ad}(U)\cdot H_\Theta$ of the compact Lie group $U$, contained in $\mathfrak{u}$, the conjugation $\sigma$ restricts to $\mathfrak{u}$ as the identity function. Then, the symplectic form above in $\mathrm{Ad}(U)\cdot H_\Theta$ is 
\begin{equation}
	\omega (X,Y) = -\mathrm{Im}\mathcal{K}(X,Y)\qquad X,Y \in \mathfrak{u}.
\end{equation}
 
Moreover, item \eqref{item2lino} of Theorem \ref{orbitafibradoSM} ensures that this $2$-form induces a symplectic form in the adjoint orbit through the pullback of $\omega$ of the inclusion $G\cdot H_\Theta \hookrightarrow \mathfrak{g}$ and with $T_{g\cdot H_\Theta} G\cdot H_\Theta= [\mathfrak{g}, g\cdot H_\Theta] $, for each $g\in G$. Such a $2$-form is what is called \emph{the real Kostant-Kirillov-Souriau} $\Omega $ (KKS) in the coadjoint orbit $\mathrm{Ad}^*(G) \alpha$, given by 
\begin{equation}
	\Omega_\alpha ( \tilde{X} (\alpha) , \tilde{Y}(\alpha) ) = \alpha [X,Y] \qquad X,Y\in \mathfrak{g}.
\end{equation}
As we already mentioned, whenever $\lie g$ is semisimple, there is a natural correspondence induced by the Cartan--Killing form between the coadjoint and the adjoint orbit, which justifies $\Omega$ being a symplectic form in the adjoint orbit.

We prove:
\begin{theorem}
	The symmetric flag space $\mathbb{F}_\Theta = U/U_\Theta$ and its dual symmetric space $\mathcal{S}$ are Lagrangian submanifolds of the adjoint orbit $G\cdot H_\Theta$ with the symplectic form defined in \eqref{symplecticform}.
\end{theorem}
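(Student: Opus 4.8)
The plan is to reduce the whole statement to a linear‑algebra fact about two real forms of $\mathfrak{g}$. By Theorem \ref{orbitafibradoSM}(\ref{item2lino}) the symplectic form on $G\cdot H_\Theta$ is the restriction of the constant bilinear form $\omega$ of \eqref{symplecticform} to the tangent spaces $T_p(G\cdot H_\Theta)=[\mathfrak{g},p]$; and a submanifold contained in a linear subspace has all its tangent spaces inside that subspace. Hence it suffices to show: (i) $\omega$ vanishes identically on $\mathfrak{u}$ and on $\mathfrak{u}^*$, so that $\mathbb{F}_\Theta\subset\mathfrak{u}$ and $\mathcal{S}\subset\mathfrak{u}^*$ are isotropic; and (ii) a dimension count exhibiting each as half the dimension of the orbit, which upgrades isotropic to Lagrangian.

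First I would record that $\omega(X,Y)=-\mathrm{Im}\,\mathcal{H}_\sigma(X,Y)=\mathrm{Im}\,\mathcal{K}(X,\sigma Y)$, where in this section $\sigma$ is the \emph{conjugation} of $\mathfrak{g}$ relative to $\mathfrak{u}$; thus $\sigma$ is the identity on $\mathfrak{u}$ and equals $-\mathrm{id}$ on $\sqrt{-1}\mathfrak{u}$, while $\mathcal{K}$ is real-valued on $\mathfrak{u}\times\mathfrak{u}$. For $X,Y\in\mathfrak{u}$ this gives $\omega(X,Y)=\mathrm{Im}\,\mathcal{K}(X,Y)=0$, so $\omega|_{\mathfrak{u}}\equiv 0$; since $T_p\mathbb{F}_\Theta=[\mathfrak{u},p]\subset\mathfrak{u}$ for every $p\in\mathbb{F}_\Theta=\mathrm{Ad}(U)H_\Theta$, the flag is isotropic.

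The core computation is the vanishing of $\omega$ on $\mathfrak{u}^*=\mathfrak{u}_\Theta\oplus\sqrt{-1}\mathfrak{m}$. Writing $X=X_0+X_1$, $Y=Y_0+Y_1$ with $X_0,Y_0\in\mathfrak{u}_\Theta\subset\mathfrak{u}$ and $X_1,Y_1\in\sqrt{-1}\mathfrak{m}\subset\sqrt{-1}\mathfrak{u}$, one has $\sigma Y=Y_0-Y_1$, so $\mathcal{K}(X,\sigma Y)=\mathcal{K}(X_0,Y_0)-\mathcal{K}(X_1,Y_1)+\mathcal{K}(X_1,Y_0)-\mathcal{K}(X_0,Y_1)$. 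The two diagonal terms are real (they pair $\mathfrak{u}$ with $\mathfrak{u}$, and $\sqrt{-1}\mathfrak{u}$ with $\sqrt{-1}\mathfrak{u}$), while the two cross terms vanish outright because $\mathcal{K}(\mathfrak{u}_\Theta,\sqrt{-1}\mathfrak{m})=\sqrt{-1}\,\mathcal{K}(\mathfrak{u}_\Theta,\mathfrak{m})=0$. The orthogonality $\mathcal{K}(\mathfrak{u}_\Theta,\mathfrak{m})=0$ is exactly that the canonical decomposition $\mathfrak{u}=\mathfrak{u}_\Theta\oplus\mathfrak{m}$ of the symmetric Lie algebra $(\mathfrak{u},\mathfrak{u}_\Theta,\mathrm{Ad}(e^{H_\Theta}))$ is $\mathcal{K}$-orthogonal, which follows from the invariance of $\mathcal{K}$ under the involution $\mathrm{Ad}(e^{H_\Theta})$ (eigenspaces for distinct eigenvalues $\pm1$ are orthogonal). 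Hence $\mathcal{K}(X,\sigma Y)$ is real and $\omega|_{\mathfrak{u}^*}\equiv 0$; since $\mathcal{S}\subset\mathfrak{u}^*$ by Theorem \ref{main1}(\ref{i}), all its tangent spaces lie in $\mathfrak{u}^*$ and $\mathcal{S}$ is isotropic.

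I would close with the dimensions: $\dim_{\mathbb{R}}\mathbb{F}_\Theta=\dim_{\mathbb{R}}\mathfrak{m}=\dim_{\mathbb{R}}\mathcal{S}$ (the latter equality by Theorem \ref{main1}(\ref{ii})), whereas $G\cdot H_\Theta\cong T^*\mathbb{F}_\Theta$ has real dimension $2\dim_{\mathbb{R}}\mathbb{F}_\Theta$; an isotropic submanifold of half the dimension of a symplectic manifold is Lagrangian, so both $\mathbb{F}_\Theta$ and $\mathcal{S}$ are Lagrangian. The main obstacle here is organizational rather than computational: one must keep distinct the two automorphisms both denoted $\sigma$ — the conjugation relative to $\mathfrak{u}$ that enters $\omega$, and the inner involution $\mathrm{Ad}(e^{H_\Theta})$ defining the symmetric structure — and then see that the cross terms genuinely vanish by the $\mathcal{K}$-orthogonality of the second decomposition, while also checking that the ambient constant form does restrict to the orbit tangent spaces $[\mathfrak{g},p]$ as the nondegenerate KKS form so that ``isotropic of half dimension'' indeed forces Lagrangian.
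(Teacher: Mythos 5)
Your proof is correct and follows essentially the same route as the paper: both arguments reduce to the fact that the tangent spaces of $\mathbb{F}_\Theta$ and $\mathcal{S}$ lie in the real forms $\mathfrak{u}$ and $\mathfrak{u}^*$ respectively, on which $\mathcal{K}(X,\sigma Y)$ is real-valued — the paper gets this for $\mathfrak{u}^*$ directly from its $\sigma$-invariance and the fact that $\mathcal{K}$ restricted to a real form is real, where you verify it term by term via the $\mathcal{K}$-orthogonal splitting $\mathfrak{u}_\Theta\oplus\sqrt{-1}\mathfrak{m}$; and your explicit dimension count upgrading ``isotropic'' to ``Lagrangian'' is a detail the paper leaves implicit. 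One slip in your closing remark: the restriction of $\omega$ to the orbit is \emph{not} the KKS form. The paper's own corollary states that $\iota$ is not a symplectomorphism, and Theorem \ref{thm:secondmainintro} shows $\mathcal{S}$ is Lagrangian for $\omega$ but not for the KKS form, so the two forms on the orbit must differ. The nondegeneracy of $\omega$ on the orbit, which your half-dimension argument needs, therefore has to be justified independently of Theorem \ref{orbitafibradoSM}(\ref{item2lino}) — for instance, by noting that $\mathcal{H}_\sigma$ is a nondegenerate Hermitian form on $\mathfrak{g}$ whose imaginary part is nondegenerate on every complex subspace, and that $T_p(G\cdot H_\Theta)=[\mathfrak{g},p]$ is a complex subspace; the paper itself simply asserts this nondegeneracy without proof.
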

\begin{proof}
Note that the Cartan--Killing form is a symmetric $2$-form while $\omega$ is anti-symmetric. Moreover, since the Lie algebra $\mathfrak{u}$ is real, the symplectic form $\omega$ restricted to the submanifold $\mathbb{F}_\Theta$ is zero.

Now, let us see which is the restriction of the symplectic form above to the submanifold $\exp (\sqrt{-1}\mathfrak{m})\cdot H_\Theta = U^* \cdot H_\Theta$ of $G\cdot H_\Theta$. This submanifold is contained in $\mathfrak{u}^*$ and each tangent space $T_{v\cdot H_\Theta} U^* \cdot H_\Theta =  [\mathfrak{u}^* , v\cdot H_\Theta]$, for each $v\in \exp (\sqrt{-1}\mathfrak{m})$, is also contained in $\mathfrak{u}^*$. The real Lie algebra $\mathfrak{u}^*$ is invariant by the conjugation $\sigma$. The Cartan--Killing form restricted to $\mathfrak{u}^*$ coincides with the Cartan--Killing form of $\mathfrak{u}^*$ since it is a real form. Then, the symplectic form restricted to $\mathfrak{u}^*$ is zero. \qedhere
\end{proof}

On the other hand, the Kostant-Kirillov-Souriau symplectic form (KKS) restricted to $\mathcal{S}$ is 
\begin{equation}	\label{KKSform}
	\Omega _\xi ( [X,\xi ]  , [Y,\xi] ) = \mathcal{K} ( \xi , [X,Y] ), \qquad X,Y \in  \mathfrak{u}^*, \xi \in \mathcal{S} .
\end{equation}

Hence, if $\xi = H_\Theta$ and $X = \sqrt{-1}A_\alpha , Y= \sqrt{-1}Z_\alpha$, for some $\alpha \in \Pi^+ \setminus \langle \Theta \rangle ^+$, then
\begin{align*}
	\Omega _{H_\Theta} ([\sqrt{-1}A_\alpha , H_\Theta] , [\sqrt{-1}Z_\alpha , H_\Theta] ) =& \mathcal{K} (H_\Theta , [\sqrt{-1}A_\alpha , \sqrt{-1}Z_\alpha]) \\
	=& \mathcal{K} (H_\Theta , -[A_\alpha , Z_\alpha]) \\
	=& \mathcal{K} (H_\Theta , 2\sqrt{-1} H_\alpha) \\
	=& 2\sqrt{-1} \alpha (H_\Theta) \\
	\neq & 0.
\end{align*}
Therefore, the submanifold $\mathcal{S}$ is non-Lagrangian for the KKS form. We thus have proved:
\begin{corollary}
    The diffeomorphism $\iota : (\mathrm{Ad}(G)\cdot H_\Theta,\Omega) \rightarrow (T^*\mathbb{F}_\Theta,\omega)$ is not a symplectomorphism.
\end{corollary}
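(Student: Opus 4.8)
The plan is to read the corollary off the two symplectic computations already in place, so that essentially no new geometry is required. The crucial structural input is Theorem~\ref{orbitafibradoSM}\eqref{item2lino}: it says exactly that $\iota$ pulls the canonical symplectic form $d\theta$ of $T^*\bb F_\Theta$ back to the real KKS form $\Omega$ on the orbit, i.e.\ $\iota^*(d\theta)=\Omega$. Thus $\iota$ is already a symplectomorphism from $(G\cdot H_\Theta,\Omega)$ to $(T^*\bb F_\Theta,d\theta)$, and the content of the corollary is that the \emph{other} symplectic form $\omega$ of \eqref{symplecticform} --- the one for which $\bb F_\Theta$ and $\cal S$ are simultaneously Lagrangian --- is \emph{not} the one transported by $\iota$ to the canonical structure. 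Equivalently, it suffices to show that $\omega$ and $\Omega$ are distinct $2$-forms on $G\cdot H_\Theta$.

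First I would record that this distinctness is witnessed precisely by $\cal S$. By the theorem preceding the corollary, $\cal S$ is Lagrangian for $\omega$, so $\omega$ vanishes on $T\cal S$. On the other hand, the explicit evaluation carried out just above, with $\xi=H_\Theta$, $X=\sqrt{-1}A_\alpha$ and $Y=\sqrt{-1}Z_\alpha$ for a root $\alpha\in\Pi^+\setminus\langle\Theta\rangle^+$, gives
\begin{equation*}
	\Omega_{H_\Theta}\bigl([\sqrt{-1}A_\alpha,H_\Theta],[\sqrt{-1}Z_\alpha,H_\Theta]\bigr)=2\sqrt{-1}\,\alpha(H_\Theta)\neq 0,
\end{equation*}
the nonvanishing being the defining property of the characteristic element $H_\Theta$, namely $\alpha(H_\Theta)\neq 0$ precisely for $\alpha\notin\langle\Theta\rangle$. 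Since both arguments lie in $T_{H_\Theta}\cal S=[\sqrt{-1}\mathfrak{m},H_\Theta]$, this shows that $\Omega$ does not vanish on $T\cal S$; in particular $\omega\neq\Omega$.

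Finally I would assemble the contradiction. Were $\iota$ a symplectomorphism carrying $\omega$ to $d\theta$, we would have $\iota^*(d\theta)=\omega$; comparing with Theorem~\ref{orbitafibradoSM}\eqref{item2lino} this forces $\omega=\Omega$, contradicting the previous paragraph. A cleaner phrasing avoiding pullbacks uses that symplectomorphisms preserve Lagrangians: since $\iota$ transports $\Omega$ to $d\theta$, if $\iota$ also respected $\omega$ then $\cal S$, being $\omega$-Lagrangian, would map to a $d\theta$-Lagrangian submanifold, i.e.\ $\cal S$ would be $\Omega$-Lagrangian, which the displayed computation refutes. Either route yields the corollary.

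I do not foresee a real obstacle: all the analytic work is already done in the Lagrangian statement for $\omega$ and in the KKS evaluation, and what remains is only careful bookkeeping of which form lives on which space, together with the single genuine ingredient $\alpha(H_\Theta)\neq 0$ for $\alpha\in\Pi^+\setminus\langle\Theta\rangle^+$. The only point deserving a line of care is to make explicit that the vectors appearing in the KKS computation indeed span directions inside $T_{H_\Theta}\cal S$, which reduces to noting that $A_\alpha,Z_\alpha\in\mathfrak{m}$ for $\alpha\notin\langle\Theta\rangle$ by the description \eqref{eq:compactrealform} of the compact real form, so that the nonvanishing genuinely obstructs $\cal S$ from being $\Omega$-Lagrangian.
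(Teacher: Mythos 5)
Your proposal is correct and follows essentially the same route as the paper: the corollary is read off from the fact that $\mathcal{S}$ is Lagrangian for $\omega$ (the preceding theorem) but not for the KKS form $\Omega$, the latter being witnessed by the displayed evaluation $\Omega_{H_\Theta}([\sqrt{-1}A_\alpha,H_\Theta],[\sqrt{-1}Z_\alpha,H_\Theta])=2\sqrt{-1}\,\alpha(H_\Theta)\neq 0$ for $\alpha\in\Pi^+\setminus\langle\Theta\rangle^+$. Your added care in checking that these vectors lie in $T_{H_\Theta}\mathcal{S}$ and that $\alpha(H_\Theta)\neq 0$ off $\langle\Theta\rangle$ only makes explicit what the paper leaves implicit.
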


\section{On the geometry of symmetric spaces and associated bundles: geodesics and curvature}
On the one hand, according to \cite[Chapter 3, p.230-p.234]{KN}, we know that to each symmetric space $G/H$, one can guarantee the existence of an invariant affine connection which recovers a principal bundle over $G/H$. Moreover, there is a one-to-one correspondence between the set of $G$-invariant connections in $G$ (connection in the bundle $G(M,H)$ which is invariant by the left translations of $G$) and the set of linear mappings $\Lambda_\mathfrak{m} : \mathfrak{m} \rightarrow \mathfrak{h}$ such that
 	\begin{equation*}
 		\Lambda_\mathfrak{m} (\mathrm{Ad}(h)X ) = \mathrm{Ad}(h) (\Lambda_\mathfrak{m} (X) ), \; X\in \mathfrak{m}, h\in H.
 	\end{equation*}

 If $\Lambda:\mathfrak{g}\rightarrow \mathfrak{g}$ is defined by
 	\begin{equation*}
 		\Lambda (X)= \left\lbrace 
 		\begin{array}{cc}
 			X, & \; X \in \mathfrak{h},\\
 			\Lambda_\mathfrak{m} (X), & \; X\in \mathfrak{m}
 		\end{array} \right.
 	\end{equation*}
 	one indetifies any $G$-invariant connection in $G$ with a connection form $\omega$ as
    \begin{equation*}
    	\Lambda(X)=\omega_{u_0} (\tilde{X}) \qquad X\in \mathfrak{g},
    \end{equation*} where $\tilde{X}$ denotes the natural lift to $G$ of a vector field $X\in \mathfrak{g}$ of $M=G/H$ and $u_0\in G$ fixed. The invariant connection corresponding to $\Lambda_\mathfrak{m} =0$ is called \emph{canonical connection} of $(G,H,\sigma)$ or $G/H$. 
 
On the other hand, we recall that a homogeneous space $M=G/H$ with a $G$-invariant indefinite Riemannian metric is said to be \emph{naturally reductive} if it admits an $\mathrm{Ad}(H)$-invariant decomposition $\mathfrak{g}=\mathfrak{h}\oplus \mathfrak{m}$ satisfying the condition 
\begin{equation}
	B(X,[Z,Y]_\mathfrak{m}) + B([Z,X]_\mathfrak{m},Y)=0, \qquad \forall X,Y,Z \in \mathfrak{m}
\end{equation}
where $B(\cdot, \cdot)$ is an $\mathrm{Ad}(H)$-invariant non-degenerate symmetric bilinear form on $\mathfrak{m}$. We observe that \emph{every symmetric space is naturally reductive} since $[X, Y]_\mathfrak{m}=0$ for all $X, Y \in \mathfrak{m}$.

Once every reductive homogeneous space $M=G/H$ admits a \emph{unique} torsion-free $G$-invariant affine connection (\cite{KN}), having the same geodesics as the canonical connection, which is obtained for
 	\begin{equation}
 		\Lambda_\mathfrak{m}(X)(Y) = \dfrac{1}{2}[X,Y]_\mathfrak{m}, \quad X,Y \in \mathfrak{m},
 	\end{equation}
we say that such an invariant connection is the \emph{natural torsion-free connection} on $G/H$ (with respect to the decomposition $\mathfrak{g}=\mathfrak{h} \oplus \mathfrak{m}$). 

Recall that if $(G, H,\sigma)$ is a symmetric space with $G$ semisimple, the restriction of the Cartan--Killing form $\mathcal{K}(\cdot, \cdot)$ of $\mathfrak{g}$ to $\mathfrak{m}$ defines a $G$-invariant (indefinite) Riemannian metric on $G/H$, which induces the canonical connection on $G/H$. Moreover, for the canonical connection of a symmetric space $(G, H,\sigma)$, the homogeneous space $M=G/H$ with origin $o$ is a (complete) affine symmetric space with symmetries $s_x$ such that, for each $X\in \mathfrak{m}$, the curve $f_t =\exp tX$ in $G$ implies that the induced curve $x_t = f_t  o$ in $M =G/H$ is a geodesic. Conversely, every geodesic in $M$ starting from $o$ is $f_t \cdot o=\exp tX \cdot o$ for some $X\in \mathfrak{m}$.

\subsection{Geodesics on symmetric flag spaces and their cotangent spaces}
\label{sec:geodesics}
Considering the essential aspects of the former section, let us describe some geodesics of every studied flag symmetric space that appears in this paper.
To do so, recall that the tangent vector space to the adjoint orbit at the origin can be expressed in the following ways (direct sums):
\begin{equation}	\label{decompoespvectoriales}
	\mathfrak{m}_G = \mathfrak{m} + \sqrt{-1}\mathfrak{m} = \mathfrak{m} + \mathfrak{n}_\Theta^+.	
\end{equation}

Hence, every  $X\in \mathfrak{m}_G$ admits the decompositions
\begin{eqnarray}	
	X= X_m + X_s, & \qquad X_m\in \mathfrak{m},\; X_s \in \sqrt{-1}\mathfrak{m} \label{decompoenm} \\
	X= X_m + X_f,  & \qquad X_m \in \mathfrak{m}, \; X_f \in \mathfrak{n}_\Theta^+ .  \label{decompoenfib}
\end{eqnarray}

With a canonical connection given by the Cartan--Killing form over $G/Z_\Theta$, the maximal geodesics are the curves
\begin{equation*}
	\gamma (t) = \pi_G (g \mathrm{exp} (tX)), \quad g\in G, \quad X\in \mathfrak{m}_G
\end{equation*}
where $\pi_G : G \rightarrow G/Z_\Theta $ is the canonical projection defined by $g\in G \mapsto g\cdot o$, with $o = 1\cdot Z_\Theta $. The maximal geodesics in $G/P_\Theta = U/U_\Theta$ are the curves 
\begin{equation*}
	\gamma _U (t) = \pi_U (u\, \mathrm{exp} (tA)), \quad u\in U, \quad A\in \mathfrak{m}\cong T_{b_\Theta}\mathbb{F}_\Theta
\end{equation*}
where $\pi_U :U \rightarrow U/U_\Theta$ is the canonical projection defined by $u\in U \mapsto u\cdot b_\Theta$.

Furthermore, we can define another projection
\begin{equation*}
	\begin{array}{rccccl}
		\pi :& G/Z_\Theta & \rightarrow & G/P_\Theta& \rightarrow & U/U_\Theta\\
		&gZ_\Theta & \mapsto & gP_\Theta &\mapsto & uU_\Theta
	\end{array}
\end{equation*}

 The following question arises: \emph{is the projection of a geodesic in $G/Z_\Theta $ a geodesic in $U/U_\Theta$?} Here, we analyze a case where the answer is yes. We will focus on studying geodesics that pass through the origin. 

From the decomposition in \eqref{decompoenfib}, if $[X_\mathfrak{m},X_f]=0$, then \[\mathrm{exp}(tX) =\mathrm{exp} (tX_m) \, \mathrm{exp}(tX_f)\] and
\begin{equation*}
	\pi (\mathrm{exp}(tX)\cdot o) = \mathrm{exp}(tX_\mathfrak{m}\cdot b_\Theta),
\end{equation*} since $X_m \in \mathfrak{m}$ and $X_f \in \mathfrak{n}_\Theta^+$.

When $X_f=0$, we get that the geodesic $\gamma (t)= g\exp(tX_\mathfrak{m} \cdot o)$ is a horizontal curve in the vector bundle $G/Z_\Theta \rightarrow U/U_\Theta$, and the projection in $U/U_\Theta$ is the geodesic curve $\gamma_U (t)=u \exp(tX_\mathfrak{m}\cdot b_\Theta)$, where $\pi_{GU}(g)= u$ is the projection of $G$ in $U$. We also have the case when $X_\mathfrak{m}=0$: the geodesic is in the fiber $\mathfrak{n}_\Theta^+$, and the projection of this curve on $U/U_\Theta$ is $\gamma_U (t) = b_\Theta$.

From the realization $T^*\mathbb{F}_\Theta= G/Z_\Theta$, the elements in $g\cdot o \in G/Z_\Theta$ can be viewed as $uv\cdot H_\Theta = u\cdot (H_\Theta + X) \in T^*\mathbb{F}_\Theta$, with $u\in U, v\in N_\Theta^+$ such that $g=uv$ and $v\cdot H_\Theta = H_\Theta +X$. Thus, the projection is given by $u\cdot (H_\Theta +X) \mapsto u\cdot H_\Theta$. Hence, for $\{ u_t \} \subset U$ and $\{ H_\Theta + X_t =e^{W_t} \cdot H_\Theta  \} \subset N_\Theta^+ \cdot H_\Theta $, we get that
\begin{equation*}
	\gamma (t) = e^{tZ} \cdot H_\Theta = u_t e^{W_t} \cdot H_\Theta = u_t \cdot (H_\Theta + X_t) .
\end{equation*}
Then, the projection of the geodesic is $\pi (\gamma (t)) = u_t \cdot H_\Theta$.  

When $Z\in \mathfrak{m}$, the geodesic $\gamma (t)= g e^{tZ} \cdot o$ is a horizontal curve in the vector bundle for $g\in G$. Its projection on the flag is the geodesic $\gamma_U (t) =  u e^{tZ} \cdot b_\Theta$, with $\pi_{GU}(g) =u\in U$. Moreover, when $Z\in \mathfrak{n}_\Theta^+$, then
\[\begin{array}{ccc}
	\gamma (t)= g\exp(tZ) \cdot o & \xrightarrow{\pi} & \gamma_U (t)=u\cdot b_\Theta.
\end{array}\] Such a geodesic lies in the fiber $u\cdot (H_\Theta + \mathfrak{n}_\Theta^+)$ of the cotangent bundle, hence being vertical.

\subsection{Orthogonal symmetric Lie algebras and conditions to nonnegative curvature on bundles}
\label{sec:curvature}
Let $(\mathfrak{g},\mathfrak{h},\sigma)$ be a symmetric Lie algebra. Consider the Lie algebra $\mathrm{ad}_\mathfrak{g}(\mathfrak{h})$ of linear endomorphisms of $\mathfrak{g}$ consisting of $\mathrm{ad}X$ where $X\in \mathfrak{h}$. If the connected Lie group of linear transformations of $\mathfrak{g}$ generated by $\mathrm{ad}_\mathfrak{g}(\mathfrak{h})$ is compact, then $(\mathfrak{g},\mathfrak{h},\sigma)$ is called an \emph{orthogonal symmetric Lie algebra}.

Let $(G, H,\sigma)$ be a symmetric space such that $H$ has a finite number of connected components, and suppose that $(\mathfrak{g},\mathfrak{h},\sigma)$ is its symmetric Lie algebra. In that case, it can be checked that $\mathrm{Ad}_G(\lie h)$ is compact if, and only if, $(\mathfrak{g},\mathfrak{h},\sigma)$ is an orthogonal symmetric Lie algebra. Furthermore, it holds that
\begin{proposition}
	Let $(\mathfrak{g},\mathfrak{h},\sigma)$ be an orthogonal symmetric Lie algebra with $\mathfrak{g}$ simple. Let $\mathfrak{g}=\mathfrak{h}+\mathfrak{m}$ be the canonical decomposition. Then,\begin{enumerate}
		\item $\mathrm{ad}\mathfrak{h}$ is irreducible on $\mathfrak{m}$.
		\item The Cartan--Killing form $\mathcal{K}$ of $\mathfrak{g}$ is (negative or positive) defined on $\mathfrak{m}$.
 	\end{enumerate}
\end{proposition}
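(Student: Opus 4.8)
The plan is to reduce everything to three structural facts about the Cartan--Killing form $\mathcal K$ and then run a single ideal-producing argument for the irreducibility, after which the definiteness will follow from a Schur-type observation. First I would record the preliminaries. Since $\mathfrak g$ is simple, $\mathcal K$ is nondegenerate. Because $\sigma$ is an automorphism it preserves $\mathcal K$, and as $\mathfrak h,\mathfrak m$ are the $(+1)$- and $(-1)$-eigenspaces of $\sigma$ one gets $\mathcal K(X,Y)=\mathcal K(\sigma X,\sigma Y)=-\mathcal K(X,Y)$ for $X\in\mathfrak h$, $Y\in\mathfrak m$; hence $\mathfrak h\perp\mathfrak m$ and $\mathcal K$ is nondegenerate on each factor separately. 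Next, because $(\mathfrak g,\mathfrak h,\sigma)$ is orthogonal, the group generated by $\mathrm{ad}\,\mathfrak h$ is compact, so there is an $\mathrm{ad}\,\mathfrak h$-invariant positive definite inner product $B$ on $\mathfrak g$; relative to $B$ every $\mathrm{ad}\,H$ with $H\in\mathfrak h$ is skew-symmetric, whence $\mathcal K(H,H)=\mathrm{tr}((\mathrm{ad}\,H)^2)\le 0$ with equality only if $\mathrm{ad}\,H=0$, i.e. only if $H=0$ (as $\mathfrak g$ is centerless). Thus $\mathcal K|_{\mathfrak h}$ is negative definite, and $\mathrm{ad}\,\mathfrak h$ acts completely reducibly on $\mathfrak m$.

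For part (1) I would argue by contradiction, assuming $\mathfrak m$ carries a proper nonzero $\mathrm{ad}\,\mathfrak h$-invariant subspace. The key is to pick the decomposition $\mathfrak m=\mathfrak m_1\oplus\mathfrak m_2$ into invariant pieces so that the two pieces are moreover $\mathcal K$-orthogonal. To achieve this I would represent the invariant symmetric form $\mathcal K|_{\mathfrak m}$ as $\mathcal K(X,Y)=B(TX,Y)$ with $T$ a $B$-self-adjoint operator commuting with $\mathrm{ad}\,\mathfrak h$; its eigenspaces are $\mathrm{ad}\,\mathfrak h$-invariant and mutually $B$- and $\mathcal K$-orthogonal. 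Either $T$ has at least two eigenvalues, in which case one eigenspace and the sum of the others furnish such $\mathfrak m_1,\mathfrak m_2$; or $\mathcal K|_{\mathfrak m}=\lambda B$ is a scalar multiple of $B$, in which case any $B$-orthogonal invariant splitting (which exists by the reducibility assumption) is automatically $\mathcal K$-orthogonal. With $\mathcal K(\mathfrak m_1,\mathfrak m_2)=0$ in hand, for $X_1\in\mathfrak m_1$, $X_2\in\mathfrak m_2$ and $H:=[X_1,X_2]\in\mathfrak h$ I would compute $\mathcal K(H,H)=\mathcal K(X_1,[X_2,[X_1,X_2]])$ and note $[X_2,[X_1,X_2]]\in[\mathfrak m_2,\mathfrak h]\subset\mathfrak m_2$, so the right-hand side lies in $\mathcal K(\mathfrak m_1,\mathfrak m_2)=0$; negative-definiteness of $\mathcal K|_{\mathfrak h}$ then forces $H=0$, i.e. $[\mathfrak m_1,\mathfrak m_2]=0$.

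Having $[\mathfrak m_1,\mathfrak m_2]=0$, I would finish by checking that $\mathfrak a:=\mathfrak m_1+[\mathfrak m_1,\mathfrak m_1]$ is an ideal of $\mathfrak g$: the bracket relations of the canonical decomposition together with the Jacobi identity give $[\mathfrak h,\mathfrak a]\subset\mathfrak a$, $[\mathfrak m_1,\mathfrak a]\subset\mathfrak a$, and $[\mathfrak m_2,\mathfrak a]=[\mathfrak m_2,\mathfrak m_1]+[\mathfrak m_2,[\mathfrak m_1,\mathfrak m_1]]=0$. Since $\mathfrak a\cap\mathfrak m=\mathfrak m_1$ is neither $0$ nor all of $\mathfrak m$, $\mathfrak a$ is a proper nonzero ideal, contradicting the simplicity of $\mathfrak g$; hence $\mathrm{ad}\,\mathfrak h$ is irreducible on $\mathfrak m$.

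For part (2) I would reuse the operator $T$. It is $B$-self-adjoint, hence diagonalizable over $\mathbb R$, and commutes with $\mathrm{ad}\,\mathfrak h$, so each eigenspace is $\mathrm{ad}\,\mathfrak h$-invariant. By part (1) the only invariant subspaces are $0$ and $\mathfrak m$, so $T$ has a single eigenvalue $\lambda$ and $\mathcal K|_{\mathfrak m}=\lambda B$. Nondegeneracy of $\mathcal K|_{\mathfrak m}$ gives $\lambda\neq 0$, and since $B$ is positive definite, $\mathcal K|_{\mathfrak m}$ is definite (positive if $\lambda>0$, negative if $\lambda<0$). The main obstacle in the whole argument is the step $[\mathfrak m_1,\mathfrak m_2]=0$: it genuinely requires choosing the invariant complement to be $\mathcal K$-orthogonal rather than merely $B$-orthogonal, and it relies on knowing in advance that $\mathcal K|_{\mathfrak h}$ is negative definite; once this is secured, the ideal argument and the Schur-type conclusion are routine.
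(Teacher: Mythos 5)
Your proof is correct and complete: the preliminaries ($\mathfrak h\perp_{\mathcal K}\mathfrak m$, negative definiteness of $\mathcal K|_{\mathfrak h}$ from skew-symmetry with respect to an averaged invariant inner product $B$), the construction of a $\mathcal K$-orthogonal invariant splitting via the $B$-self-adjoint intertwiner $T$, the invariance computation forcing $[\mathfrak m_1,\mathfrak m_2]=0$, the ideal $\mathfrak m_1+[\mathfrak m_1,\mathfrak m_1]$ contradicting simplicity, and the Schur-type conclusion $\mathcal K|_{\mathfrak m}=\lambda B$ all check out. The paper itself offers no proof of this proposition --- it is quoted as a standard fact from Kobayashi--Nomizu (Chapter XI) --- and your argument is essentially the classical one found there, so there is nothing to compare beyond noting that you have correctly supplied the omitted details.
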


A symmetric Lie algebra $(\mathfrak{g},\mathfrak{h},\sigma)$ is said to be \textit{effective} if $\mathfrak{h}$ contains no non-zero ideal of $\mathfrak{g}$.

An effective symmetric Lie algebra $(\mathfrak{g},\mathfrak{h},\sigma)$ is \emph{irreducible} if $\mathrm{Ad}([\mathfrak{m},\mathfrak{m}])$ is a irreducible representation on $\mathfrak{m}$. If $\mathfrak{g}$ is simple, then the symmetric Lie algebra is irreducible. In general, an orthogonal symmetric Lie algebra $(\mathfrak{g},\mathfrak{h},\sigma)$ with $\mathfrak{g}$ semisimple is said to be of \emph{compact type} or \emph{noncompact type} according as the Cartan--Killing form $\mathcal{K}$ of $\mathfrak{g}$ is negative-defined or positive definite on $\mathfrak{m}$. In what follows, we must consider Riemannian symmetric spaces, which, according to the former proposition, are related to orthogonal symmetric Lie algebras for semisimple lie algebras. Our main goal is to take advantage of the symmetric space realization of the cotangent bundle to understand the geometry of some vector bundles. More precisely, to understand the existence or not of metrics of nonnegative sectional curvature on some bundles; see, for instance, \cite{Strake1989, Chaves1992,Belegradek2001,10.1215/S0012-7094-03-11613-0}.

Since Cheeger--Gromoll's Soul Theorem: \emph{any complete
noncompact manifold $M$ with a metric of nonnegative sectional curvature is diffeomorphic to the normal bundle of some compact totally geodesic manifold} $\Sigma \subset M$, named as \emph{Soul} of $M$, it emerges the question, seen as a converse to this result: \emph{which vector
bundles over compact, nonnegatively curved base spaces can admit complete metrics
of nonnegative curvature?}

On the one hand, there are vector bundles that are known not to admit
nonnegative curvature, but, at least to the best of the authors' knowledge, in all such examples, the base space has an infinite fundamental group, see \cite{10.1215/S0012-7094-03-11613-0}.

On the other hand, trivial positive results include all homogeneous vector
bundles over homogeneous spaces: let $K{<\,}H{<\,}U$ be compact Lie groups and consider the submersion $\pi\co (U/K,\cal H)\to (U/H,b)$ defined by $\pi(gK)=gH$, where $\cal H$ encodes the horizontal distribution defining the submersion. Suppose that $(U/H,b)$ is normal homogeneous, where $b$ is the normal homogeneous Riemannian metric on $U/H$, and $g\cdot \cal H_p=\cal H_{gp}$ for all $g\in G$. Then, $U/K$ admits a Riemannian submersion metric $\tilde{\textsl g}$ of nonnegative sectional curvature with totally geodesic fibers.

Let $G$ be a complex simple Lie group, and let $U$ be its compact real form. Considering the previous discussion, we also take $K = \{e\}$ and let $H$ be the intersection $U\cap P_{\Theta}$ where $P_{\Theta}$ is a parabolic subgroup of $G$. Then $U/H$ is a flag manifold. According to Theorem \ref{thm:realizations}, we have that each pair $(U, H)$ there presented is a flag manifold that is also symmetric and whose cotangent bundle $T^*(U/H)$ is a symmetric space. Moreover, the submanifold $\cal S \subset T^*(U/H)$ is a symmetric space dual to $U/H$. We take the vector bundle
\[\bar \pi :\cal S\hookrightarrow \widetilde M := \left(U\times\cal S/\star \right)\rightarrow U/H\]
Moreover, observe that its fibers coincide with $\cal S$ while it defines a vector bundle over $U/H$. Here, $\widetilde M$ is the orbit space of the following $H$ action on $U\times \cal S$ 
\[(H\ni h) \star \left((u,v) \in U\times \cal S\right) := (uh,\mathrm{Ad}(h^{-1})v) \in U\times \cal S.\]
The projection $\bar \pi([u,v]) := \pi(u)$ where $\pi : U\rightarrow U/H$ is the orbit map $u\mapsto uH$.

Fixed the origin $o = eH \in U/H$ we denote $T_o(U/H) := \lie m$ where $\lie u = \lie m\oplus \lie{u}_{\Theta}$ corresponds to the appropriate Cartan decomposition of $\lie u$. Since $\cal S = \exp(\sqrt{-1}\lie m)$ one can define a Riemannian submersion metric on $\bar \pi$ in the following manner:
\[\ga(X+\sqrt{-1}Z,Y+\sqrt{-1}W) := -\cal K(X,Y) + \cal K(\sqrt{-1}Z,\sqrt{-1}W),~X,Y,Z,W \in \lie m\]
where $\cal K$ is the Cartan--Killing form of $\lie u$. Note that $X,Z \in \lie m$ while $\sqrt{-1}Z, \sqrt{-1}W\in \sqrt{-1}\lie m \cong T_o(U^*/H)$, where $U^*/H \cong S$ is the dual symmetric space to $U/H$. We also observe that any sphere bundle obtained out $\bar \pi$ admits a submersion metric with nonnegative curvature since it constitutes a homogeneous submersion: recall that $\cal S$ can be injectively projected in $U/H$ (Theorem \ref{thm:aS}). However, let us show that $\ga$ does not have nonnegative sectional curvature.

Observe that $U/H$ contains the topology of the bundle $\bar \pi$ since its fibers are contractible. We then consider $U/H$ as a \emph{Soul} to $\bar \pi$. We make use of the following result of Tapp--Strake--Walschap (see Theorem A and Corollary 2.5 in \cite{10.1215/S0012-7094-03-11613-0}): in order to $\cal S \hookrightarrow (M,\ga) \rightarrow (U/H,-\cal K)$ to have non-negative sectional curvature it must holds that
\[R_{\ga}(X,Y,\sqrt{-1}Z,\sqrt{-1}W)^2 \leq 4\mathrm{sec}_{U/H}(X,Y)\mathrm{sec}_{U^*/H}(\sqrt{-1}Z,\sqrt{-1}W),\]
where $R_{\ga}(X,Y,\sqrt{-1}Z,\sqrt{-1}W) := \ga(R_{\ga}(X,Y)\sqrt{-1}Z,\sqrt{-1}W)$ and $R_{\ga}$ is the Riemannian tensor associated to $\ga$. However, since $U/H$ and $U^*/H$ are symmetric spaces dual to each other, there exists $X, Y, Z, W \in \lie m$ in a way that the former inequality can not hold, finishing the proof Theorem \ref{thm:ziller}.

 We have then presented the first examples of vector bundles over simply connected manifolds that can not admit metrics of nonnegative sectional curvature, with the base manifold of nonnegative sectional curvature and finite fundamental group, but vector bundles for which the corresponding sphere bundles can admit metrics of nonnegative sectional curvature. We solve, in particular, Problem 9 in \cite{Ziller_fatnessrevisited}.

\section{Explicit descriptions of symmetric flag spaces}
\label{sec:proofmain}

In this section we find the automorphisms $\sigma$ of the symmetric spaces $(U,U_\Theta, \sigma)$, its dual symmetric space $(U^* ,(U^*)^\sigma)$ and $(G,G^\sigma)$. We search for an element $H_\Theta$ such that $g_\Theta := \exp (H_\Theta)$ and $H_\Theta$ has the same centralizer in $G$ and $U$, i.e.,
\begin{equation}	\label{introeq}
	Z_G(H_\Theta) = Z_G(g_\Theta) \qquad Z_U(H_\Theta)=Z_U(g_\Theta). 
\end{equation} 

The automorphism $\sigma$ is given by $C_{g_\Theta}$ for symmetric spaces and symmetric Lie algebras; we use the same notation $\sigma$ to the automorphism Ad$(g_\Theta)$.

	Assuming  that $p+q=l+1$ and $p\leq q$, we get
\[\begin{array}{|c|c|c|c|}
\hline
	G& \mathrm{Sl}(l+1,\mathbb{C}) &  \mathrm{Sp}(l,\mathbb{C}) & \mathrm{SO}(2l,\mathbb{C})	\\
	\hline
	
	\Theta & \Sigma \setminus \{ \alpha_p \} & \Sigma \setminus \{ \alpha_l \} &  \Sigma \setminus \{ \alpha_l \} \\
	\hline
	\mathbb{F}_\Theta & \mathrm{SU}(p+q)/\mathrm{S}(\mathrm{U}(p) \times \mathrm{U}(q)) &  \mathrm{Sp}(l)/\mathrm{U}(l) &\mathrm{SO}(2l)/\mathrm{U}(l) \\
	\hline
	T^* \mathbb{F}_\Theta & G/\mathrm{S}(\mathrm{Gl}(p,\mathbb{C}) \times \mathrm{Gl} (q,\mathbb{C})) & G/ \mathrm{Gl}(l,\mathbb{C}) & G / \mathrm{Gl}(l,\mathbb{C})\\
	\hline
\end{array}\]

The corresponding $H_\Theta$ satisfying \eqref{introeq} for $A_l, C_l $ and $D_l$ cases are

\begin{equation*}
	\dfrac{\sqrt{-1}\pi}{l+1}\left( \begin{array}{cc}
		qI_p&0\\
		0&-pI_q
	\end{array}\right) , \,\,\, \dfrac{\sqrt{-1}\pi}{2} \left( \begin{array}{cc}
		\mathrm{Id}_l&0\\ 0&-\mathrm{Id}_l
	\end{array} \right), \,\,\, \dfrac{\sqrt{-1}\pi}{2} \left( \begin{array}{cc}
		\mathrm{Id}_l&0\\0&-\mathrm{Id}_l \end{array} \right),
\end{equation*}
respectively.

\subsection{Case $A_l$}

Let $G$ be the Lie group $\mathrm{Sl}(n, \mathbb{C})$ and let $U$ be the compact Lie group $\mathrm{SU}(n)$, for $n=l+1$, with Lie algebras $\mathfrak{g}$ and $\mathfrak{u}$, respectively. The corresponding flag symmetric space is the set of Grassmannian of complex $p$-dimensional subspaces of $\mathbb{C}^{p+q}$ $$\mathbb{F}_{\Theta} = \mathrm{SU}(p+q)/\mathrm{S}(\mathrm{U}(p)\times \mathrm{U}(q)),\qquad n=p+q,\; p\leq q, $$ with 
\begin{gather*}
\Theta = \Sigma \setminus \{ \alpha_p \} = \\ \{ \alpha_1 = \lambda_1 - \lambda_2, \cdots ,  \alpha_{p-1}= \lambda_{p-1}-\lambda_{p} , \alpha_{p+1}= \lambda_{p+1}-\lambda_{p+2} , \cdots , \alpha_l = \lambda_l - \lambda_{l+1} \}    
\end{gather*} where each $\lambda_i$ is given by
\begin{equation*}
	\lambda_i : \mathrm{diag} \{  a_1 , \cdots , a_{l+1} \} \mapsto a_i.
\end{equation*}

	The compact real form of $\mathfrak{g}$ is $\mathfrak{u}= \mathfrak{su}(n)$ and an element in $\mathfrak{u}$ that defines the flag $\mathbb{F}_\Theta$ is the matrix 
\begin{equation}	\label{HThetaA}
	H_\Theta:=\dfrac{i\pi}{n}\left( \begin{array}{cc}
		qI_p&0\\
		0&-pI_q
	\end{array}\right) \in \mathfrak{su}(n)\subset \mathfrak{sl}(n,\mathbb{C}).
\end{equation} 
It belongs to the Cartan subalgebra of $\mathfrak{su}(n)$ and its exponential is the matrix \begin{equation}
	g_\Theta:=\exp(H_\Theta)=\left( \begin{array}{cc}
		e^{i\frac{\pi q}{n}}I_p&0\\
		0&e^{-i\frac{p\pi}{n} }I_q
	\end{array} \right) .
\end{equation}
Since $p+q=n$, then $e^{i\frac{\pi}{n}(p+q)}=-1$. So, we have
$e^{i\pi \frac{q}{n}}\cdot e^{i\pi \frac{p}{n}} = -1$. Therefore, $e^{i\pi\frac{q}{n}}=-e^{-i\pi \frac{p}{n}}$
and 
$g_\Theta=e^{i\pi \frac{q}{n}} I_{p,q} \in \mathrm{SU}(n)$, where $I_{p,q} := 
\begin{matrix}
	\left( 	\begin{array}{cc}
		I_p&0\\
		0&-I_q
	\end{array} \right) 
\end{matrix} $.

\

Furthermore, $(g_\Theta)^2 = (e^{i\pi \frac{q}{n}})^2 \cdot I_{p,q}^2 = e^{2i \pi \frac{q}{n}}\cdot I_n$. This verifies that $(g_\Theta^2)^n = e^{2\pi q i}\cdot I_n = I_n$, i.e., $(g_\Theta)^2\in Z(\mathrm{SU}(n))$, since $g_\Theta^2$ is a $n$-th root of unit.

We shall use the same notation $\sigma$ for the symmetric automorphism for Lie groups and algebras. Seeing that $I_{p,q}^{-1}=I_{p,q}$, the inner automorphism 
$$	\begin{array}{rccl}
	\sigma :& \mathrm{SU}(n) & \rightarrow & \mathrm{SU}(n)\\
	&h & \mapsto & C_{g_\Theta}(h)=g_\Theta hg_{\Theta}^{-1} = e^{i\pi \frac{q}{n}}I_{p,q}he^{-i\pi \frac{q}{n}}I_{p,q}^{-1}=I_{p,q}hI_{p,q}
 \end{array}$$ is an involution.

 The fixed point set of $C_{g_\Theta}$ in $\mathrm{SU}(n)$ is computed as follows:
\begin{align*}
	\mathrm{fix} (C_{g_\Theta}) =&	\{h\in \mathrm{SU}(n): I_{p,q}hI_{p,q}=h \} 	\nonumber			\\
	=&\{ h\in \mathrm{SU}(n) : I_{p,q}=h^{-1}I_{p,q}h \} = Z_{\mathrm{SU}(n)} (I_{p,q}) \\
	=&\mathrm{S}(\mathrm{U}(p)\times \mathrm{U}(q)) .
\end{align*}

Hence, the flag manifold $	\mathrm{SU}(n)/\mathrm{S}(\mathrm{U}(p)\times \mathrm{U}(q))$ is a symmetric space. The tangent space to the symmetric flag at the origin is isomorphic to
\begin{equation} \label{primerm}
	\mathfrak{m}= \left\lbrace \left(  \begin{array}{cc}
		0&Z\\-\bar{Z}^T&0
	\end{array} \right) : Z_{p\times q} \; \mathrm{complex} \, \mathrm{matrix} \right\rbrace 
\end{equation}
and the symmetric pair is $(\mathfrak{su}(n), \mathfrak{u}_\Theta)$, where the subalgebra  $\mathfrak{u}_\Theta$ consists of all elements in $\mathfrak{su}(n)$ which are fixed by $C_{g_\Theta}$. This is also the Lie algebra of $\mathrm{S}(\mathrm{U}(p)\times \mathrm{U}(q))$ 
\begin{equation} \label{tsl}
	\mathfrak{u}_\Theta=
	\left\lbrace  \left( \begin{array}{cc}
		A_{p\times p}&0\\ 0 & B_{q\times q}
	\end{array} \right): \mathrm{tr}(A)+ \mathrm{tr}(B)=0, A\in \mathfrak{u}(p), \; B\in \mathfrak{u}(q) \right\rbrace .
\end{equation}

To study the symmetry of the cotangent bundle of the symmetric flag space $\mathbb{F}_\Theta$, we study the centralizer of $H_\Theta$ in the Lie group $G$,
\begin{align}
	Z_{\mathrm{Sl}(n,\mathbb{C})}(H_\Theta)=& \{ h\in \mathrm{Sl}(n,\mathbb{C}) : \mathrm{Ad}(h)H_\Theta = H_\Theta \} \nonumber  \\
	=& \left\{ h = \left( \begin{array}{cc}
		a_{p\times p} & b\\ c&d_{q\times q}
	\end{array} \right)  \in \mathrm{Sl}(n, \mathbb{C}) : h \left( \begin{array}{cc}
		qI_p & 0\\ 0&-pI_q
	\end{array} \right) = \left( \begin{array}{cc}
		qI_p & 0\\ 0&-pI_q
	\end{array} \right) h \right\} \nonumber \\
	=& \left\{ h\in \mathrm{Sl}(n,\mathbb{C}): \left( \begin{array}{cc}
		qa & -pb\\ qc&-pd
	\end{array} \right) = \left( \begin{array}{cc}
		qa & qb\\ -pc&-pd
	\end{array} \right) \right\} \nonumber \\
	=& \left\{ \left( \begin{array}{cc}
		a & 0\\ 0&d
	\end{array} \right) \in \mathrm{Sl}(n,\mathbb{C}) : a_{p\times p}, d_{q\times q} \right\} \nonumber \\
	=& \mathrm{S}(\mathrm{Gl}(p,\mathbb{C})\times \mathrm{Gl}(q,\mathbb{C}))
\end{align} and note that the centralizer of the matrix $I_{p,q}$ in the group $\mathrm{Sl}(n,\mathbb{C})$ is

\begin{align*}
	Z_{\mathrm{Sl}(n,\mathbb{C})}(I_{p,q}) =& \left\{ h = \left( \begin{array}{cc}
		a_{p\times p} & b\\ c&d_{q\times q}
	\end{array} \right)  \in \mathrm{Sl}(n, \mathbb{C}) : h  \left( \begin{array}{cc}
		I_p & 0\\ 0&-I_q
	\end{array} \right) = \left( \begin{array}{cc}
		I_p & 0\\ 0&-I_q
	\end{array} \right) h \right\} \nonumber \\
	=& \left\{ h\in \mathrm{Sl}(n,\mathbb{C}): \left( \begin{array}{cc}
		a & -b\\ c&-d
	\end{array} \right) = \left( \begin{array}{cc}
		a & b\\ -c&-d
	\end{array} \right) \right\} \nonumber \\
	=& \left\{ \left( \begin{array}{cc}
		a & 0\\ 0&d
	\end{array} \right) \in \mathrm{Sl}(n,\mathbb{C}) : a_{p\times p}, d_{q\times q} \right\} \nonumber \\
	=& \mathrm{S}(\mathrm{Gl}(p,\mathbb{C})\times \mathrm{Gl}(q,\mathbb{C})) \\
	=& Z_{\mathrm{Sl}(n, \mathbb{C})}(H_\Theta)
\end{align*}

We can extend the inner automorphism $\sigma = C_{g_\Theta}$ to the complex Lie group $\mathrm{Sl}(n,\mathbb{C})$:
\begin{equation}	\label{autoA}
	\begin{array}{rccl}
		C_{g_\Theta}:&\mathrm{Sl}(n,\mathbb{C}) & \rightarrow & \mathrm{Sl}(n,\mathbb{C}) \\
		&h & \mapsto & ghg^{-1} = I_{p,q}hI_{p,q}.
	\end{array}
\end{equation} Since we saw that it is an involution and its fixed point set is $Z_{\mathrm{Sl}(n,\mathbb{C})}(I_{p,q}) = Z_{\mathrm{Sl}(n,\mathbb{C})}(H_\Theta) = \mathrm{S}(\mathrm{Gl}(p,\mathbb{C})\times \mathrm{Gl}(q,\mathbb{C}))$, one gets that the cotangent bundle of $\mathbb{F}_\Theta$, as homogeneous space, \[\mathrm{Sl}(n,\mathbb{C})/ \mathrm{S}(\mathrm{Gl}(p,\mathbb{C})\times \mathrm{Gl}(q,\mathbb{C}))\] is a symmetric space associated to the symmetric pair $(\mathfrak{sl}(n,\mathbb{C}), \mathfrak{sl}(p,\mathbb{C})\oplus\mathfrak{sl}(q,\mathbb{C})\oplus \mathbb{C}^*)$. In fact, let $A\in \mathfrak{s}( \mathfrak{gl}(p,\mathbb{C}) \times \mathfrak{gl}(q,\mathbb{C}))$, then $$A=\left( \begin{array}{cc}
	D_{p\times p}&0 \\ 0&B_{q\times q} \end{array} \right) .$$ 
\begin{itemize}
	\item[(i)] 	If $A\in \mathfrak{sl}(p,\mathbb{C})\oplus\mathfrak{sl}(q,\mathbb{C})$, then $$A= \left[ A+\left( \begin{array}{cc}
		-I_{p}&0 \\ 0&\frac{p}{q} I_{q} \end{array} \right) \right]   + \left( \begin{array}{cc}
		I_p&0\\0&-\frac{p}{q}I_{q}
	\end{array} \right) \mapsto A' + \frac{p}{q}.$$
	\item[(ii)]  If $A\notin \mathfrak{sl}(p,\mathbb{C})\oplus\mathfrak{sl}(q,\mathbb{C})$, then $$A= \left[ A+\mathrm{tr}(B) \left( \begin{array}{cc}
		I_{p}&0 \\ 0&- I_{q} \end{array} \right) \right]   + \mathrm{tr}(B) \left( \begin{array}{cc}
		-I_p&0\\0&I_{q}
	\end{array} \right) \mapsto A' + \mathrm{tr}(B).$$
\end{itemize}
In both of cases, $A'$ belongs to $\mathfrak{sl}(p,\mathbb{C})\oplus\mathfrak{sl}(q,\mathbb{C})\oplus \mathbb{C}^*$. Hence $\mathfrak{s}( \mathfrak{gl}(p,\mathbb{C}) \times \mathfrak{gl}(q,\mathbb{C})) \subset \mathfrak{sl}(p,\mathbb{C})\oplus\mathfrak{sl}(q,\mathbb{C})\oplus \mathbb{C}^* $. The other containment is straightforward. Thus, this symmetric Lie algebra is in Table \ref{Table1}. Furthermore, the tangent space of that symmetric space at the origin is isomorphic to 
\begin{align}
	\mathfrak{m}_G =& 	\mathfrak{m} + \sqrt{-1} \mathfrak{m} \nonumber \\
 \cong & \mathfrak{n}_\Theta^- + \mathfrak{n}_\Theta^+ \nonumber \\
	=& \left\lbrace \left( \begin{array}{cc}
		0&A\\B&0
	\end{array} \right) : A_{p\times q} , \; B_{q\times p} \; \mathrm{complex}\; \mathrm{matrices} \right\rbrace \label{soma}
\end{align}
where the elements in $\mathfrak{n}_\Theta^-$ are the matrices in \ref{soma}  with $A=0$, $\mathfrak{n}_\Theta^+$ is the set \eqref{soma} with $B=0$ and $\mathfrak{m}$ is the set in \eqref{primerm}.

Directly applying Theorem \ref{orbitafibradoSM}, we get that $$ G\cdot H_\Theta = G/Z_\Theta = T^* (U/U\cap Z_\Theta ) = T^*(U/U_\Theta).$$
Then,
\[\mathrm{Sl}(n,\mathbb{C})/ \mathrm{S}(\mathrm{Gl}(p,\mathbb{C})\times \mathrm{Gl}(q,\mathbb{C})) = T^* (\mathrm{SU}(n)/\mathrm{S}(\mathrm{U}(p)\times \mathrm{U}(q))).\]

Thus, the cotangent bundle of the flag symmetric manifold $\mathrm{SU}(n)/\mathrm{S}(\mathrm{U}(p)\times \mathrm{U}(q))$ is also a symmetric space with the same involutive automorphism $\sigma = C_{g_\Theta}$.

Note that the inner automorphism can be lifted to the Lie algebra $\mathfrak{g}=\mathfrak{sl}(n,\mathbb{C})$, obtaining the function
\begin{equation}	\label{involsl}
	\begin{array}{rccl}
		\sigma =\mathrm{Ad}(g_\Theta ):&\mathfrak{sl}(n,\mathbb{C}) & \rightarrow & \mathfrak{sl} (n,\mathbb{C}) \\
		&Z & \mapsto & g_\Theta Zg_\Theta^{-1} = I_{p,q}ZI_{p,q} .
	\end{array}
\end{equation}

Denote by $\mathfrak{g}_0 $ the non compact real form $\mathfrak{su}(p,q)$ of $\mathfrak{g}$, which elements are the $n\times n$ complex matrices satisfying
\begin{equation}
	I_{p,q} X+\bar{X}^T I_{p,q} =0, \qquad I_{p,q} = \left( \begin{array}{cc}
		1_p&0\\0&-1_q
	\end{array} \right) . 
\end{equation} This Lie algebra is  
\begin{equation}
	\mathfrak{su}(p,q)= \left\{ \left( \begin{array}{cc}
		A_{p\times p}& B_{p\times q} \\ \bar{B}_{q\times p}^T & C_{q\times q}
	\end{array}  \right) : A \in \mathfrak{u}(p) , \; C\in \mathfrak{u}(q) ,\; \mathrm{tr}(A+C)=0 \right\},
\end{equation}and has the Cartan decomposition $\mathfrak{su}(p,q) = \mathfrak{k} + \mathfrak{s}$ with $\mathfrak{k} = \mathfrak{u}_\Theta$, as in \eqref{tsl}, and 
\begin{equation}
	\mathfrak{s} = \left\{ \left( \begin{array}{cc}
		0&B\\\bar{B}^T&0
	\end{array} \right) : B_{p\times q} \; \mathrm{complex}\; \mathrm{matrix} \right\}.
\end{equation}

	The involution Ad(exp$(H_\Theta)$) in $\mathfrak{sl}(n,\mathbb{C})$, 
\begin{equation}
	\left( \begin{array}{cc}
		A&B\\C&D
	\end{array}
	\right) \mapsto \left( \begin{array}{cc}
		A&-B\\-C&D
	\end{array} \right)
\end{equation}
restricted to $\mathfrak{su}(p,q)$ is also an inner automorphism since $H_\Theta \in \mathfrak{u}_\Theta = \mathfrak{k}$. It also coincides with the Cartan involution $Z \in \mathfrak{su}(p,q) \mapsto -\bar{Z}^T \in \mathfrak{su}(p,q)$. 

From the definition of $\mathfrak{m}$ in \eqref{primerm}, we have that the vector space $\sqrt{-1}\mathfrak{m}$ coincides with $ \mathfrak{s}$ and therefore, the dual symmetric algebra of $(\mathfrak{su}(n), \mathfrak{k}, \sigma )$ is
\begin{equation}	\label{DSA}
	 \mathfrak{su}(n)^* = \mathfrak{su}(p,q)= \mathfrak{k} + \mathfrak{s}.
\end{equation}

In this way, as studied in Section \ref{DSS}, the dual symmetric space\linebreak $\mathrm{SU}(p,q)/\mathrm{S}(\mathrm{U}(p)\times \mathrm{U}(q))$ of $\mathrm{SU}(p+q)/\mathrm{S}(\mathrm{U}(p)\times \mathrm{U}(q))$ is a submanifold of the adjoint orbit $\mathrm{Sl}(n,\mathbb{C})\cdot H_\Theta$ (this $H_\Theta$ is from the $A_l$ case). 

The dual symmetric space $\mathrm{SU}(p,q)/\mathrm{S}(\mathrm{U}(p)\times \mathrm{U}(q))$ is diffeomorphic to $\mathcal{S} = \exp \mathfrak{s} \cdot H_\Theta$. To prove Proposition \ref{lemfibinterS}, let us calculate the intersection $\mathcal{S} \cap (H_\Theta + \mathfrak{n}_\Theta^+)$. Initially, note that
\begin{equation}	
	H_\Theta \in	(H_\Theta + \mathfrak{n}_\Theta^+ ) \cap \mathrm{Ad}(S) \cdot H_\Theta \subset \mathfrak{su}(p,q) \cap (H_\Theta + \mathfrak{n}_\Theta^+),
\end{equation}
where $S$ is the exponential of the vector space $\mathfrak{s}$.
To calculate the intersection $\mathfrak{su}(p,q) \cap (H_\Theta + \mathfrak{n}_\Theta ^+)$, we describe the elements of the fiber passing through the origin $H_\Theta$:
\begin{equation} \left( 
	\begin{array}{cc}
		\dfrac{	\sqrt{-1}\pi}{n}q I_p&Z_{p\times q}\\ 0 &-\dfrac{\sqrt{-1}\pi}{n}pI_q
	\end{array} \right) ,
\end{equation}
with $Z\in \mathfrak{n}_\Theta^+$. These matrices belong to $\mathfrak{su}(p,q)$ if, and only if, they satisfy the expression
\begin{equation*}
	\left( \begin{array}{cc}
		I_p &0\\0&-I_q
	\end{array} \right) \left( \begin{array}{cc}
		\frac{\sqrt{-1}\pi}{n}qI_p & Z\\0&-\frac{\sqrt{-1}\pi}{n}pI_q
	\end{array} \right) + \left( \begin{array}{cc}
		- \frac{\sqrt{-1}\pi}{n}qI_p &0\\ \bar{Z}^T & \frac{\sqrt{-1}\pi}{n}qI_p
	\end{array} \right)  \left( \begin{array}{cc}
		I_p&0\\0&-I_q
	\end{array} \right) = \mathbf{0}
\end{equation*}
which is equivalent to 
\begin{equation}
	\left( \begin{array}{cc}
		0&Z\\\bar{Z}^T &0
	\end{array} \right)  = \mathbf{0} \Longleftrightarrow Z= \mathbf{0}.
\end{equation}

It means that the intersection $\mathfrak{su}(p,q) \cap (H_\Theta + \mathfrak{n}_\Theta^+) \subset \{ H_\Theta \}$. Once from \eqref{Ainter}, we conclude that
\begin{equation}	
	\mathfrak{su}(p,q) \cap (H_\Theta + \mathfrak{n}_\Theta^+) = \{ H_\Theta \} .
\end{equation}
it holds that
\begin{equation}
	H_\Theta \in (H_\Theta + \mathfrak{n}_\Theta^+) \cap \mathcal{S} \subset (H_\Theta + \mathfrak{n}_\Theta^+) \cap \mathfrak{su}(p,q)^*(l) = \{ H_\Theta \} .
\end{equation}
Hence, 
\begin{equation}	\label{interfibA}
	(H_\Theta + \mathfrak{n}_\Theta^+) \cap \mathcal{S} = \{ H_\Theta \} .
\end{equation}

However, the fiber $-H_\Theta + \mathfrak{n}_\Theta^-$ is
\begin{equation}
	\left( 
	\begin{array}{cc}
		\dfrac{ -	\sqrt{-1}\pi}{n}q I_p&0 \\Z_{p\times q} &\dfrac{\sqrt{-1}\pi}{n}pI_q
	\end{array} \right) 
\end{equation}
while the element $H_\Theta$ is the linear combination \begin{gather*}
    H_\Theta = 2\sqrt{-1}\pi q [H_{\alpha_1} + 2 H_{\alpha_2} + \cdots+ pH_{\alpha_p}] \\+ 2\sqrt{-1} \pi p [ (q-1)H_{\alpha_{p+1}} + (q-2)H_{\alpha_{p+2}} + \cdots + 2H_{\alpha _{p+q-2}} + H_{\alpha_l}].
\end{gather*} 
We also have, $\mathcal{K}(\sqrt{-1} H_{\alpha_p} , H_\Theta) =-\pi.$

\subsection{Case $C_l$}\label{chpt4}

	Let $G$ be the Lie group $\mathrm{Sp}(l, \mathbb{C})$ and $U$ be the compact Lie group $\mathrm{Sp}(l) = \mathrm{Sp}(l, \mathbb{C})\cap \mathrm{SU}(2l)$, with Lie algebras $\mathfrak{g}$ and $\mathfrak{u}$, respectively. The corresponding flag symmetric manifold is the space of complex structures on  $\mathbb {H}^{l}$ compatible with its standard inner product, that is,
\begin{equation*}
	\mathbb{F}_\Theta =	\mathrm{Sp}(l)/\mathrm{U}(l)
\end{equation*}
with $\Theta = \{ \lambda_1 - \lambda_2 , \cdots , \lambda_{l-1} - \lambda_l \}$ and each $\lambda_i$ given by
\begin{equation}	\label{lambai}
	\lambda_i : \mathrm{diag}\{ a_1 , \cdots , a_l \} \mapsto a_i.
\end{equation}
The Lie algebra $\mathfrak{u}:=\mathfrak{sp}(l) =\mathfrak{sl}(l,\mathbb{C})\cap \mathfrak{su}(2l)$ is the set
\begin{equation}
	\mathfrak{sp}(l)= \left\lbrace \left(  \begin{array}{cc}
		a_{l\times l}&b\\-\bar{b}&\bar{a}
	\end{array} \right) \mid  a\in \mathfrak{u}(l), b: \, \mathrm{symmetric} \, \mathrm{complex} \, \mathrm{matrix} \right\rbrace 
\end{equation}
which is a compact real form of the Lie algebra of $G$, denoted by $\mathfrak{g}:=\mathfrak{sp}(l,\mathbb{C}) = \{ A\in M(2l,\mathbb{C}): AJ+JA^T=0 \}$, where $J=\left( \begin{array}{cc}
	0&-Id_l\\Id_l &0
\end{array} \right) $ with $J^2 = -Id_{2l}$. The elements of $\mathfrak{g}$ are $2l \times 2l $ complex matrices which can be represented in blocks as
\begin{equation}
	\left( \begin{array}{cc}
		a_{l\times l}&b\\c&-a^T
	\end{array} \right) ,\qquad b,c: \; \mathrm{symmetric}.
\end{equation}

The matrix above is quaternionic since $a,b,c$ are complex matrices. Furthermore, the Lie group $G$ is the set
\begin{eqnarray}
	\mathrm{Sp}(l,\mathbb{C})=& \left\lbrace  M\in M_{2l\times 2l }(\mathbb{C}): M^T \Omega M = \Omega \right\rbrace ,\quad \Omega =\left( \begin{array}{cc}
		0&1_l\\-1_l&0
	\end{array} \right)  \label{omega} \\
	=& \left\lbrace M= \left( \begin{array}{cc} A&B\\C&D \end{array} \right) : A,B,C,D \in M_{l\times l}(\mathbb{C}), \mathrm{det}(M)=1 \right\rbrace 
\end{eqnarray}
with $-C^TA+A^TC=0$, $-C^TB+A^TD=I_l$, $-D^TB+B^TD=0$.

An element in the Cartan subalgebra $\sqrt{-1}\mathfrak{h}_\mathbb{R}$ of the compact real form $\mathfrak{u}$ that defines the symmetric flag manifold is
\begin{equation}	\label{thetac}
	H_\Theta := \dfrac{\pi}{2} \left( \begin{array}{cc}
		\sqrt{-1}Id_l&0\\ 0&-\sqrt{-1}Id_l
	\end{array} \right) \in \mathfrak{sp}(l)\subset \mathfrak{sp}(l,\mathbb{C})
\end{equation} and its exponential is the matrix
\begin{equation}
	g_\Theta :=e^{H_\Theta } =  \left( \begin{array}{cc}
		\sqrt{-1}Id_l&0\\ 0&-\sqrt{-1}Id_l
	\end{array} \right) 
\end{equation}

Since $H_\Theta=\frac{\pi}{2}g_\Theta$ and the centralizer $Z_\Theta = Z_G (g_\Theta)$ of $H_\Theta$ in $G$ satisfies
\begin{equation}
	g_\Theta^2 = -Id_{2l} \in Z(U) \subset Z(G),
\end{equation} 
and the inner automorphism of $\mathrm{Sp}(l, \mathbb{C})$ is given by 
\begin{equation} \label{invosp}
 \sigma = C_{g_\Theta}=g_\Theta hg_\Theta^{-1}.  
\end{equation}
Its fixed point set is:
\begin{align}
	Z_G(g_\Theta)=Z_\Theta	=& \left\lbrace   \left( \begin{array}{cc}
		a&0\\0&d
	\end{array} \right) \in \mathrm{Sp}(l,\mathbb{C})  \right\rbrace  \nonumber 	
\end{align}
We also have
\begin{eqnarray}
	Z_\Theta=& \left\lbrace \left( \begin{array}{cc}
		a&0\\0&(a^T)^{-1} 
	\end{array} \right) \in \mathrm{Sp}(l,\mathbb{C}) \right\rbrace  \cong\mathrm{Gl}(l,\mathbb{C}). \nonumber
\end{eqnarray}
Thus, 
\begin{equation}
	G/Z_G(g_\Theta) = \mathrm{Sp}(l,\mathbb{C})/ \mathrm{Gl}(l,\mathbb{C})
\end{equation} is a symmetric space with symmetric pair $(\mathfrak{sp}(l,\mathbb{C}),\mathfrak{sl}(l,\mathbb{C})\oplus \mathbb{C}^*)$.  In fact, let $A\in \mathfrak{gl}(n,\mathbb{C})$. 
\begin{itemize} \label{HIJOHIJOHIJO}
	\item[(i)] 	If $A\in \mathfrak{sl}(n,\mathbb{C})$, then $$A= \left[ A+\left( \begin{array}{cc}
		-1&0 \\ 0&\frac{1}{n-1} I_{n-1} \end{array} \right) \right]   + \left( \begin{array}{cc}
		1&0\\0&-\frac{1}{n-1}I_{n-1}
	\end{array} \right) \mapsto A' + \frac{1}{1-n} \in \mathfrak{sl}(n,\mathbb{C}) \oplus \mathbb{C}^*.$$
	\item[(ii)]  If $A\notin \mathfrak{sl}(n,\mathbb{C})$, then $A= \left[  A-\frac{tr(A)}{n}I_n \right]  + \frac{tr(A)}{n}I_n \mapsto \left[  A-\frac{tr(A)}{n}I_n \right] + \frac{tr(A)}{n} \in \mathfrak{sl}(n,\mathbb{C}) \oplus \mathbb{C}^*$. 
\end{itemize}
Hence $ \mathfrak{gl}(n,\mathbb{C})) \subset \mathfrak{sl}(n,\mathbb{C})\oplus \mathbb{C}^* $. The other containment is straightforward. Hence, this symmetric Lie algebra is in Table \ref{Table2}. Once more, Theorem \ref{orbitafibradoSM} implies that the cotangent bundle
\begin{equation}
	G/Z_\Theta  = G\cdot H_\Theta = T^*(U/U_\Theta)
\end{equation}is a symmetric space. Since $U_\Theta = Z_\Theta \cap \mathrm{Sp}(l) $ and using the fact that $\mathrm{Sp}(l) = \mathrm{Sp}(l,\mathbb{C})\cap \mathrm{SU}(2l) $, we have that
\begin{eqnarray}
	U_\Theta =& \left\lbrace \left( \begin{array}{cc}
		a&0\\0& (a^T)^{-1} 
	\end{array} \right) \in \mathrm{SU}(2l) : a_{l\times l} \; \mathrm{complex} \right\rbrace =\mathrm{U}(l)   \nonumber 
\end{eqnarray}

Hence, the cotangent bundle of $\mathrm{Sp}(l)/\mathrm{U}(l)$:
\begin{equation}
	T^*(\mathrm{Sp}(l)/\mathrm{U}(l)) = \mathrm{Sp}(l,\mathbb{C})/\mathrm{Gl}(l,\mathbb{C})
\end{equation}
is a symmetric space.

The Lie algebra $\mathfrak{g}=\mathfrak{sp}(l,\mathbb{C})$ has the real normal form $\mathfrak{g}_0 = \mathfrak{sp}(l,\mathbb{R}) $
\begin{equation}
	\mathfrak{g}_0 = \left\lbrace  \left( \begin{array}{cc}
		a_{l\times l}&b\\c &-a^T
	\end{array} \right) \mid b,c \; \mathrm{symmetrics} \right\rbrace  
\end{equation} 
which has the Cartan decomposition $$\mathfrak{g}_0 = \mathfrak{k} + \mathfrak{s}$$ where
\begin{equation} \label{kc}
	\mathfrak{k}=\left\lbrace  \left( \begin{array}{cc}
		a_{l\times l}&-b\\b&a
	\end{array} \right):  a \in \mathfrak{o}(l), b \,\, \mathrm{ symmetric} \right\rbrace   \cong \mathfrak{u}(l)
\end{equation}
\begin{equation}	\label{sc}
	\mathfrak{s}= \left\lbrace  \left( \begin{array}{cc}
		a_{l\times l}&b\\b&-a
	\end{array} \right) : a,b \, \, \mathrm{ symmetrics} \right\rbrace .
\end{equation}

The lift of the automorphism in \eqref{invosp} to the Lie algebra restricted to $\mathfrak{g}_0$ is
\begin{equation}	
	\begin{array}{cccc}
		\sigma = \mathrm{Ad}(g_\Theta):& \mathfrak{sp} (l, \mathbb{R} )&\rightarrow & \mathfrak{sp}(l,\mathbb{R})\\
		&X&\mapsto & \mathrm{Ad}(g_\Theta)X =g_\Theta Xg_\Theta^{-1}.
	\end{array}
\end{equation}
It does not coincide with the Cartan involution $\Theta (X) = -X^T$. However, we have the symmetric pairs $(\mathfrak{sp}(l),\mathfrak{u}(l))$ and $(\mathfrak{sp}(l,\mathbb{C}), \mathfrak{sl}(l,\mathbb{C}) \oplus \mathbb{C} )$, since the Lie algebra of $Z_\Theta$ is $\mathfrak{gl}(l,\mathbb{C}) \cong \mathfrak{sl}(l,\mathbb{C}) \oplus \mathbb{C}$. 

The tangent bundle of the cotangent bundle of $\mathbb{F}_\Theta$, as a homogeneous space at the origin, is isomorphic to 
\begin{eqnarray}
	\mathfrak{m}_G =&	\mathfrak{n}_\Theta^- + \mathfrak{n}_\Theta^+\\
	=& \left\lbrace    \left( \begin{array}{cc}
		0&Y_{l\times l}\\X_{l \times l }&0 
	\end{array} \right)  : X,Y \, \mathrm{symmetric} \; \mathrm{complex}\; \mathrm{matrices} \right\rbrace 	\label{mgc}	\\
	=& \mathfrak{m} + \sqrt{-1}\mathfrak{m}	
\end{eqnarray} where $\mathfrak{n}_\Theta^\pm$ is the set of matrices in \eqref{mgc} with $X=0$ and  $Y=0$, respectively. The tangent bundle of the symmetric flag manifold at the origin is isomorphic to
\begin{eqnarray}
	\mathfrak{m} \cong & \sum_{\alpha \in \Pi^+ \setminus \left\langle \Theta \right\rangle^+ } \mathfrak{u}_\alpha\\
	=& \left\lbrace \left( \begin{array}{cc}
		0&B\\-\bar{B}&0
	\end{array} \right) : B_{l\times l} \, \mathrm{symmetric}\; \mathrm{complex} \right\rbrace  \label{segundom}
\end{eqnarray}
where the subspace $\mathfrak{u}_\alpha =$span$_\mathbb{R}\{ A_\alpha, Z_\alpha \}$  is generated by $A_\alpha = \left( \begin{array}{cc}
	0&E_{ij}+E_{ji}\\-E_{ij}-E{ji}&0
\end{array} \right) $ and $Z_\alpha = \left( \begin{array}{cc}
	0&\sqrt{-1}(E_{ij}+E_{ji})\\ \sqrt{-1}(E_{ij}+E_{ji}) &0
\end{array} \right) $ with $\alpha \in \Pi^+ \setminus \langle \Theta \rangle ^+ = \{ \lambda_{i} + \lambda_{j}, 1\leq i,j\leq l \}$, for $E_{i,j} = \delta_i^j Id$.\\

	The involution $\sigma$ for the symmetric Lie algebra $(\mathfrak{sp}(l,\mathbb{C}), \mathfrak{gl}(l,\mathbb{C}) ,\sigma))$ is the following automorphism, with $H_\Theta \in \mathfrak{sp}(l)$ as in \eqref{thetac},
\begin{equation}	\label{adjunta}
\mathrm{Ad}(e^{H_\Theta}):	\left(\begin{array}{cc}
		A_{l\times l}&B\\C&-A^T
	\end{array}\right) \in \mathfrak{sp}(l,\mathbb{C}) \mapsto \left( \begin{array}{cc}
		A&-B\\-C&-A^T
	\end{array} \right). 
\end{equation}

The involution above is also an automorphism of  $\mathfrak{sp}(l)$, and this symmetric Lie algebra has the canonical decomposition 
\begin{equation}
	\mathfrak{sp}(l) = 
	\mathfrak{u}_\Theta+ \mathfrak{m},
\end{equation}
with $\mathfrak{m}$ as in \eqref{segundom} and $\mathfrak{u}_\Theta$ being the subalgebra such that $\sigma (W) = W$, for all $W\in \mathfrak{u}_\Theta$

\begin{equation}
	\mathfrak{u}_\Theta = \left\{ \left( \begin{array}{cc}
		A&0\\0&\bar{A}
	\end{array} \right) : A\in \mathfrak{u}(l) \right\} \cong \mathfrak{u}(l).
\end{equation} 

Then, the dual symmetric Lie algebra of $(\mathfrak{sp}(l), \mathfrak{u}(l) ,\sigma )$ must have the canonical decomposition
\begin{equation}
	\mathfrak{sp}(l)^* = \mathfrak{u}_\Theta + \sqrt{-1}\mathfrak{m}. 
\end{equation}

The set $\sqrt{-1}\mathfrak{m}$ is the vector space satisfying $\sigma (W) = -W$, for all $W\in \sqrt{-1}\mathfrak{m}$, it is
\begin{equation}
	\left\{ \left( \begin{array}{cc}
		0&B_{l\times l}\\ \bar{B}&0
	\end{array} \right) : A \; \mathrm{symmetric}\; \mathrm{complex} \; \mathrm{matrix} \right\}
\end{equation}
and then
\begin{equation} \label{spl*}
	\mathfrak{sp}(l)^* = \left\{ \left( \begin{array}{cc}
		A&B \\ \bar{B}&\bar{A}
	\end{array} \right) : A\in \mathfrak{u}(l), B\; \mathrm{symmetric}\; \mathrm{complex} \; \mathrm{matrix}  \right\} .
\end{equation}

Let us define the following function
\begin{equation}	\label{Cayley}
	\left( \begin{array}{cc}
		A&B\\ \bar{B}&\bar{A}  
	\end{array} \right) \in \mathfrak{sp}(l)^* \mapsto \left( \begin{array}{cc}
		\mathrm{Re}(A)+\mathrm{Re}(B)& \mathrm{Im}(B)-\mathrm{Im}(A)\\ \mathrm{Im}(A)+\mathrm{Im}(B) & \mathrm{Re}(A)-\mathrm{Re}(B)
	\end{array} \right) \in \mathfrak{sp}(l,\mathbb{R}).
\end{equation}
It is an isomorphism of Lie algebras since $A\in \mathfrak{u}(l)$ and B are symmetric. The function above is known as the Cayley transform. Furthermore, 
\begin{equation}
	\mathfrak{u}_\Theta \cong \mathfrak{k} \cong \mathfrak{u}(l), \qquad \sqrt{-1}\mathfrak{m} \cong \mathfrak{s},
\end{equation}
where $\mathfrak{sp}(l) = \mathfrak{k} + \mathfrak{s}$ is the Cartan decomposition indicated in \eqref{kc} and \eqref{sc}. Hence, the dual symmetric Lie algebra is isomorphic to $(\mathfrak{sp}(l,\mathbb{R}), \mathfrak{k}, \theta)$, with $\theta$ being the Cartan involution $\theta (X)=\Omega X \Omega^{-1}$, where $\Omega$ is as in \eqref{omega}.

The function defined in \eqref{Cayley} transforms $H_\Theta \in \mathfrak{sp}(l) $ in 
\begin{equation}
	\hat{H}_\Theta := \dfrac{\sqrt{-1}\pi}{2} \left( \begin{array}{cc}
		0& -I_{l\times l} \\ I_{l\times l} &0
	\end{array}\right) \in \mathfrak{sp}(l,\mathbb{R}).
\end{equation}

The dual symmetric space $\mathrm{Sp}(l)^*/U_\Theta \cong \mathrm{Sp}(l,\mathbb{R}) / \mathrm{U}(l)$ is diffeomorphic to $\mathcal{S} = \exp \sqrt{-1}\mathfrak{m} \cdot H_\Theta \cong \exp \mathfrak{s}\cdot \hat{H}_\Theta$. To prove Proposition \ref{lemfibinterS}, let us calculate the intersection $\mathcal{S} \cap (H_\Theta + \mathfrak{n}_\Theta^+)$. Initially, note that
\begin{equation}	\label{Ainter}
	H_\Theta \in	(H_\Theta + \mathfrak{n}_\Theta^+ ) \cap \mathcal{S} \subset \mathfrak{sp}(l)^* \cap (H_\Theta + \mathfrak{n}_\Theta^+).
\end{equation}
To calculate the intersection $\mathfrak{sp}(l)^* \cap (H_\Theta + \mathfrak{n}_\Theta ^+)$, we describe the elements of the fiber passing through the origin $H_\Theta$:
\begin{equation}
	\left\lbrace  \left(  \begin{array}{cc}
		\frac{\sqrt{-1}\pi}{2}I_{l \times l} & Z \\ 0 & -\frac{\sqrt{-1}\pi}{2}I_{l\times l}
	\end{array} \right) : Z \in M_{l\times l} (\mathbb{C}) \right\rbrace .
\end{equation}
with $Z\in \mathfrak{n}_\Theta^+$. These matrices belong to  $\mathfrak{sp}(l)^*$ if, and only if, they belong to the set in \eqref{spl*}, i.e., $Z$ is a complex symmetric matrix and $-\bar{Z}=0$. It means that the intersection $\mathfrak{sp}(l)^* \cap (H_\Theta + \mathfrak{n}_\Theta^+) \subset \{ H_\Theta \}$. According to \eqref{Ainter} we conclude that
\begin{equation}	
	\mathfrak{sp}(l)^* \cap (H_\Theta + \mathfrak{n}_\Theta^+) = \{ H_\Theta \} .
\end{equation}

Thus, $H_\Theta \in (H_\Theta + \mathfrak{n}_\Theta^+) \cap \mathcal{S} \subset (H_\Theta + \mathfrak{n}_\Theta^+) \cap \mathfrak{sp}(l)^* = \{ H_\Theta \}$ and 
\begin{equation}	\label{interfibC}
	(H_\Theta + \mathfrak{n}_\Theta^+) \cap \mathcal{S} = \{ H_\Theta \} .
\end{equation}

Finally, the element $H_\Theta$ is the linear combination 
\begin{gather*}
2\pi (l+1) \sqrt{-1} [ H_{\lambda_1 - \lambda_2} + 2 H_{ \lambda_2 - \lambda_3} + \cdots + (l-1)H_{\lambda_{l-1} - \lambda_l} ]\\ + l(l+1)\pi \sqrt{-1}H_{2\lambda_l}\end{gather*} and $$ \mathcal{K}(\sqrt{-1}H_{2\lambda_l} , H_\Theta) = -4\pi(l+1). $$

\subsection{Case $D_l$}\label{chpt5}
Let $G$ be the Lie group $\mathrm{SO}(2l, \mathbb{C})$ and $U$ be the compact Lie subgroup $\mathrm{SO}(2l)$, with Lie algebras \[\mathfrak{g}:=\mathfrak{so}(2l,\mathbb{C})  = \left\lbrace \left( \begin{array}{cc}
	a&b\\-b^T&d
\end{array} \right) \in \mathfrak{gl}(2l,\mathbb{C}) :  a_{l\times l }, d_{l\times l}\, \text{skew-symmetric} \right\rbrace \] and $\mathfrak{u}:=\mathfrak{so}(2l)$ respectively. The corresponding symmetric flag manifold is the space of orthogonal complex structures on $\mathbb{R}^{2l}$
\begin{equation*}
	\mathbb{F}_\Theta=	\mathrm{SO}(2l)/\mathrm{U}(l)
\end{equation*}
with $\Theta = \{ \lambda_1 - \lambda_2 , \cdots , \lambda_{l-1}- \lambda_{l} \}$ and $\Sigma = \Theta \cup \{ \lambda_{l-1} + \lambda_l \}$ where each $\lambda_{i}$ was defined in \eqref{lambai}.

As $\mathfrak{so}(2l,\mathbb{C})$ is defined over an algebraically closed field if two not degenerated quadratic forms are equivalents, then the algebras of anti-symmetric matrices for the quadratic forms are both isomorphic.

Let 
\begin{equation}	\label{Ff}
	F=\left( \begin{array}{cc}
		0&I_l\\I_l&0
	\end{array} \right) ,\qquad f=\dfrac{1}{\sqrt{2}}\left( \begin{array}{cc}
		\sqrt{-1}I_l & I_l\\I_l&\sqrt{-1}I_l
	\end{array} \right) .
\end{equation}
Then, \begin{equation*}
	F= f^T I_{2l} f.
\end{equation*} Hence, the Lie algebra is isomorphic to 
\begin{equation}	\label{isoDL}
	\mathfrak{g}_1 = \{ A\in \mathfrak{sl}(2l, \mathbb{C}): AF+FA^T=0 \} 
\end{equation}

Thus, $A\in \mathfrak{so}(2l,\mathbb{C})$ if, and only if, $fAf^{-1}\in \mathfrak{g}_1$. In other words $f\mathfrak{so}(2l,\mathbb{C})f^{-1} = \mathfrak{g}_1$ and the algebras $\mathfrak{so}(2l,\mathbb{C})$ and $\mathfrak{g}_1$ are isomorphic. To make easier the computations, we will use the Lie algebra $\mathfrak{g}_1$ instead of $\mathfrak{so}(2l)$, denoting by $G_1$ the connected Lie group with Lie algebra $\mathfrak{g}_1$. 

A Cartan subalgebra of $\mathfrak{g}_1$ is given by
\begin{equation}
	\mathfrak{h}= \left\lbrace \left( \begin{array}{cc}
		\Lambda&0\\0&-\Lambda
	\end{array} \right) : \Lambda_{l\times l} \; \mathrm{diagonal}\; \mathrm{matrix} \right\rbrace .
\end{equation}

An element in the Cartan subalgebra $\sqrt{-1}\mathfrak{h}_\mathbb{R}$ of the compact real form $f \mathfrak{u} f^{-1}= f \mathfrak{so}(2l) f^{-1} $ that defines the symmetric flag manifold $\mathbb{F}_\Theta$ is
\begin{equation*}
	\hat{H}_\Theta :=\dfrac{\pi}{2}\sqrt{-1} \left( \begin{array}{cc}
		I_{l\times l}&0\\0& -I_{l\times l} \end{array} \right) \in \sqrt{-1} \mathfrak{h}_\mathbb{R} \subset \mathfrak{g}_1 
  \,\,\, \mathrm{  and  }\,\,\,  g_\Theta :=\exp(\hat{H}_\Theta ) = \sqrt{-1} \left( \begin{array}{cc}
	I_{l\times l}&0\\0&-I_{l\times l}
	\end{array} \right) .
\end{equation*}

Since $g_\Theta^2 = -Id_{2l} \in Z(G_1)$, the inner automorphism of $G_1$ given by $\sigma = C_{g_\Theta }=g_\Theta Ag_\Theta^{-1}$ is an involution. 

Let us calculate the fixed point set, which is the centralizer of $g_\Theta$ in $G_1$. Since $H_\Theta=\frac{\pi}{2}g_\Theta$, then $ Z_{G_1} (H_\Theta)=Z_{G_1} (g_\Theta)$. Indeed, 
\begin{align}
	Z_{G_1} (g_\Theta) =	& \{ A\in G_1 : C_{g_\Theta} (A)=A \} = \{ A\in G_1 : g_\Theta Ag_\Theta ^{-1}=A \} = \{ A\in G_1 : H_\Theta A = \dfrac{\pi}{2} Ag_\Theta  \} \nonumber \\
	=& \left\{ \left( \begin{array}{cc}
		x_{l\times l} & y\\z&w_{l\times l}
	\end{array} \right)  \in G_1 
H_\Theta A = \dfrac{\pi}{2}  \sqrt{-1} \left( \begin{array}{cc}
x & -y \\ z & -w
\end{array} \right)  \right\} \nonumber \\ 
	=& \left\{ A= \left( \begin{array}{cc}
		x_{l\times l} & y\\z&w_{l\times l}
	\end{array} \right)  \in G_1 :H_\Theta A  = \left( \begin{array}{cc}
		\dfrac{\sqrt{-1}\pi}{2}x & -\dfrac{\sqrt{-1} \pi}{2}y \\ \dfrac{\sqrt{-1}\pi}{2}z & -\dfrac{\sqrt{-1}\pi}{2}w
	\end{array} \right)  \right\} \nonumber \\ 
	=&
\left\{ A= \left( \begin{array}{cc}
	x_{l\times l} & y\\z&w_{l\times l}
\end{array} \right)  \in G_1 :H_\Theta A  =AH_\Theta  \right\}= Z_{G_1}(H_\Theta ). \label{uao1} 
\end{align}

On the other hand, the set of fixed points of $\sigma = $Ad$(g_\Theta)$ (which coincides with \ref{adjunta} in $\mathfrak{g}_1$ is
\begin{align}
	& \left\lbrace A= \left( \begin{array}{cc}
		a&b\\c&-a^T 
	\end{array} \right) \in \mathfrak{g}_1 : a,b,c,d \in M(l,\mathbb{C}) , g_\Theta A g_\Theta^{-1}=A \right\rbrace \nonumber \\
	=& \left\lbrace A= \left( \begin{array}{cc}
		a&b\\c&-a^T	\end{array} \right) \in \mathfrak{g}_1 : b=0=c	\right\rbrace = \left\{  \left( \begin{array}{cc}
		a&0\\0&-a^T
	\end{array} \right) \in \mathfrak{sl}(2l,\mathbb{C}) \right\} \cong \mathfrak{gl}(l,\mathbb{C})	      \label{uao2}. 
\end{align}

Using \eqref{uao1} and \eqref{uao2}, one can see that 
\begin{equation*}
	G/Z_\Theta  \cong \mathrm{SO}(2l, \mathbb{C})/\mathrm{Gl}(l,\mathbb{C}) 
\end{equation*}
is a symmetric space. Its corresponding symmetric pair is $(\mathfrak{so}(2l,\mathbb{C}), \mathfrak{sl}(l,\mathbb{C})\oplus \mathbb{C}^* )$. Hence, this symmetric Lie algebra is in Table \ref{Table2}. Theorem \ref{orbitafibradoSM} implies that the cotangent bundle 
\begin{equation*}
	G/Z_\Theta  = G\cdot H_\Theta = T^*(U/U_\Theta ) 
\end{equation*}
is a symmetric space. In order to find out the set $U_\Theta $, since $U=\mathrm{SO}(2l)$, let us use the matrix $H_\Theta = f^{-1} \hat{H}_\Theta f \in \mathfrak{so}(2l)$. This matrix is
\begin{equation}	\label{hd}
	H_\Theta := \dfrac{\pi}{2} \left(  \begin{array}{cc}
		0&Id_l\\ -Id_l &0
	\end{array} \right) \in \mathfrak{so}(2l,\mathbb{R}) \subset \mathfrak{so} (2l,\mathbb{C}).
\end{equation}
Then,
\begin{align}
	U_\Theta   =& \left\lbrace u=
	\left( \begin{array}{cc}
		a&b\\c&d
	\end{array} \right) \in \mathrm{SO}(2l): a,b \in M_{l\times l}(\mathbb{R}) , uH_\Theta=H_\Theta u
	\right\rbrace \nonumber \\
	=& \left\lbrace 
	\left( \begin{array}{cc}
		a_{l\times l}&-b\\b&a
	\end{array} \right) \in \mathrm{Sl}(2l,\mathbb{R}) : aa^T + bb^T =I_{l\times l},ba^T=ab^T
	\right\rbrace \nonumber \\
	\cong & \{ a+ib \in \mathrm{Gl}(l,\mathbb{C}) : (a+ib)(a+ib)^* = Id \} = \mathrm{U}(l)\nonumber 
\end{align}

Hence, the cotangent bundle of $\mathrm{SO}(2l)/\mathrm{U}(l)$:
\begin{equation}
	T^*(\mathrm{SO}(2l)/\mathrm{U}(l))\cong \mathrm{SO}_\mathbb{C} (2l)/ \mathrm{Gl}(l,\mathbb{C})
\end{equation}
is also a symmetric space.

The tangent space of the cotangent bundle of $\mathbb{F}_\Theta$ at the origin is isomorphic to 
\begin{eqnarray}
	\mathfrak{m}_G =& \mathfrak{n}_\Theta^- + \mathfrak{n}_\Theta^+ \nonumber \\
	= &	\mathfrak{m} + \sqrt{-1} \mathfrak{m}  \nonumber \\
 	\cong & \left\lbrace \left( \begin{array}{cc}
	0&A_{l\times l}\\B_{l\times l}&0
\end{array} \right) :A,B \; \mathrm{anti-symmetric}\; \mathrm{complex} \right\rbrace \label{mgd} 
\end{eqnarray}
where $\mathfrak{n}_\Theta^\pm $ are isomorphic to the subalgebras of matrices in \ref{mgd} with $X=0$ and $Y=0$, respectively. The tangent space of the symmetric flag manifold at the origin $b_\Theta $ is isomorphic to 
\begin{eqnarray}
	\mathfrak{m}\cong & \sum_{\Pi^+ \setminus \langle \Theta \rangle ^+} \mathfrak{u}_\alpha \\
	=& \left\lbrace \left( \begin{array}{cc}
		A&B\\B&-A
	\end{array} \right) : A,B\in \mathfrak{so}(l)  \right\rbrace . \label{tercerm}
\end{eqnarray}

Another real form of $\mathfrak{so}(2l,\mathbb{C})$ is $\mathfrak{g}_0 := \mathfrak{so}^* (2l):= \mathfrak{sl}(l,\mathbb{H}) \cap \mathfrak{so}(2l,\mathbb{C})$ and its elements are $2l \times 2l$ complex matrices of the form
\begin{equation}\left( 
	\begin{array}{cc}
		Z_1&Z_2\\-\bar{Z}_2&\bar{Z_1}
	\end{array}\right) , \qquad Z_1^T=-Z_1,\; \bar{Z_2}^T=Z_2,
\end{equation}
where $Z_1$ and $Z_2$ are $l\times l$ complex matrices.  

The involution $\sigma $ in $\mathfrak{g}_0$ coincides with the Cartan involution. Then, the Lie algebra $\mathfrak{g}_0$ has the Cartan decomposition  $\mathfrak{g}_0= \mathfrak{k} + \mathfrak{s}$ with $\mathfrak{k}= \mathfrak{u}_\Theta \cong \mathfrak{u}(l)$ is the Lie algebra of $U_\Theta $ and $\mathfrak{s}=\sqrt{-1}\mathfrak{m}$.

Finally, to prove Proposition \ref{lemfibinterS}, let us study the intersection of the submanifold $\mathcal{S}= \exp \mathfrak{s} \cdot H_\Theta$ with the fiber passing through the origin. The dual symmetric space $\mathrm{SO}(2l)^*/\mathrm{S}(\mathrm{U}(p)\times \mathrm{U}(q))$ is diffeomorphic to $\mathcal{S} $ and the dual symmetric Lie algebra of $(\mathfrak{so}(2l)^*, \mathfrak{u}_\Theta,\sigma )$, with $\sigma =$Ad(exp$(H_\Theta))$ \eqref{hd} being the function

\begin{equation}
	\sigma : \left( \begin{array}{cc}
		A&B_{l\times l}\\ -B^T&D
	\end{array} \right) \in \mathfrak{so}(2l)^* \mapsto \left( \begin{array}{cc}
		D&B^T\\-B&A
	\end{array} \right) \in \mathfrak{so}(2l)^*,
\end{equation}
is \begin{equation}
	\mathfrak{so}(2l)^* = \mathfrak{u}(l) + \sqrt{-1}\mathfrak{m},
\end{equation}
with $\mathfrak{m}$ as in \eqref{tercerm} and $\mathfrak{u}_\Theta$ as the subalgebra isomorphic to $\mathfrak{u}(l)$:
\begin{equation}
	\left\{ \left( \begin{array}{cc}
		Z_1&Z_2\\-Z_2&Z_1
	\end{array} \right) : Z_1 \in \mathfrak{so}(l), Z_2 \; \mathrm{symmetric}\; \mathrm{matrix} \right\} = \mathfrak{so}^*(2l) \cap \mathfrak{so}(2l)
\end{equation}

Then, 
\begin{equation*}
	\sqrt{-1}\mathfrak{m} = \left\{ \left( \begin{array}{cc}
		\sqrt{-1}A&\sqrt{-1}B\\ \sqrt{-1}B &-\sqrt{-1}A
	\end{array} \right) : A,B \in \mathfrak{so}(l) \right\} = \mathfrak{so}^*(2n) \cap \sqrt{-1}\mathfrak{so}(2l).
\end{equation*}

Since $H_\Theta \in \mathfrak{so}^*(2l) = \mathfrak{sl}(l,\mathbb{H}) \cap \mathfrak{so}(2l, \mathbb{C})$, then $\sigma$ is an automorphism of $\mathfrak{so}^*(2l)$ and it coincides with $\sigma ^*$. Hence, the dual symmetric Lie algebra is 
\begin{eqnarray*}
	\mathfrak{so}^*(2l) =& \mathfrak{so}(2l)^\sigma + \sqrt{-1}\mathfrak{m} \label{DSD} \\
	=& \left\{ \left( \begin{array}{cc}
		Z_1&Z_2\\-\bar{Z}_2&\bar{Z}_1
	\end{array} \right) : Z_1 \in \mathfrak{so}(l,\mathbb{C}), \bar{Z}_2^T=Z_2 \right\}.	\label{sodual}
\end{eqnarray*}

Let us now consider the isomorphism of $\mathfrak{so}(2n,\mathbb{C})$ with the algebra in \ref{isoDL}. The fiber, via this isomorphism, is the affine vector space $\widetilde{H_\Theta + \mathfrak{n}_\Theta^+}$ defined by
\begin{equation*}
\hat{H}_\Theta + f\mathfrak{n}_\Theta^+ f^{-1} =	\left\lbrace \left(  \begin{array}{cc}
		\frac{\sqrt{-1}\pi}{2}I_l &Y\\0&-\frac{\sqrt{-1}\pi}{2}I_l
	\end{array} \right) : Y\in \mathfrak{so}(l,\mathbb{C})) \right\rbrace .
\end{equation*}

Since $f \mathfrak{so}(2n,\mathbb{C}) f^{-1} = \mathfrak{g}_1$, with $f$ as in \eqref{Ff} and $\mathfrak{g}_1$ in \eqref{isoDL}, we can back to the original fiber, calculating $f^{-1}Mf$, for every $M\in \widetilde{H_\Theta + \mathfrak{n}_\Theta^+}$, where
\begin{equation*}
	f^{-1}= \left( \begin{array}{cc}
		-\frac{\sqrt{-1}}{\sqrt{2}} I_l & \frac{1}{\sqrt{2}}I_l\\ \frac{1}{\sqrt{2}} I_l & - \frac{\sqrt{-1}}{\sqrt{2}} I_l
	\end{array} \right) .
\end{equation*}
Then, $	H_\Theta + \mathfrak{n}_\Theta^+ =$
\begin{equation} \left\lbrace  f^{-1} \left( \begin{array}{cc}
		\frac{\sqrt{-1}\pi}{2}I_l & Y\\0& - \frac{\sqrt{-1}\pi}{2}I_l
	\end{array} \right)  f = \left( \begin{array}{cc}
		-\frac{\sqrt{-1}}{2}Y & \frac{\pi}{2}I_l + \frac{1}{2}Y \\ -\frac{\pi}{2}I_l + \frac{1}{2}Y & \frac{\sqrt{-1}}{2}Y
	\end{array} \right)  : Y\in \mathfrak{so}(l,\mathbb{C})\right\rbrace .
\end{equation}
Thus, the elements of the fiber have the form
\begin{equation}	\label{fiberrealim}
	\left(\begin{array}{cc}
		\frac{1}{2} \mathrm{Im}(Y) & \frac{\pi}{2}I_l + \frac{1}{2}\mathrm{Re}(Y) \\ - \frac{\pi}{2} I_l + \frac{1}{2}\mathrm{Re}(Y) & -\frac{1}{2}\mathrm{Im}(Y)
	\end{array}\right) + \sqrt{-1} \left( \begin{array}{cc}
		\frac{-1}{2}\mathrm{Re}(Y) & \frac{1}{2}\mathrm{Im}(Y) \\ \frac{1}{2} \mathrm{Im}(Y) & \frac{1}{2}\mathrm{Re}(Y)
	\end{array} \right) ,
\end{equation}
with $Y\in \mathfrak{so}(l,\mathbb{C}))$. Furthermore, the dual symmetric Lie algebra $\mathfrak{so}(2l)^*$ is the set of matrices 
\begin{equation}	\label{dualrealim}
	\left( \begin{array}{cc}
		A+\sqrt{-1}C & B+\sqrt{-1}D \\ -B+\sqrt{-1}D & A-\sqrt{-1}C
	\end{array} \right) = \left( \begin{array}{cc}
		A&B\\-B&A
	\end{array} \right) + \sqrt{-1} \left( \begin{array}{cc}
		C&D\\D&-C
	\end{array} \right),
\end{equation}with $A,C,D \in \mathfrak{so}(l,\mathbb{R})$ and $B$ being a real symmetric matrix.

Looking at the real part of matrices in \eqref{fiberrealim} and comparing it with the real part of matrices in \eqref{dualrealim}, we can conclude that a matrix in the fiber belongs to $\mathfrak{so}(2l)^*$ if, and only if, Im$(Y)$ and Re$(Y)$ are equal to the null matrix, and then $Y=0$. Hence
\begin{equation} \label{interfibD}
	(	H_\Theta + \mathfrak{n}_\Theta^+ ) \cap \mathfrak{so}(2l)^* = \{ H_\Theta \} .
\end{equation}

The element $\hat{H}_\Theta$ is the linear combination \begin{gather*}
    2\pi (l-1)\sqrt{-1} [ H_{\lambda_1 - \lambda_2} + 2 H_{\lambda_2 - \lambda_3} +\cdots + (l-2)H_{\lambda_{l-2} - \lambda_{l-1}} ] \\+ (l-1) \pi \sqrt{-1} [(l-2)H_{\lambda_{l-1} - \lambda_{l}} + lH_{\lambda_{l-1} + \lambda_{l}}]
\end{gather*} and $$\mathcal{K} ( \sqrt{-1} H_{\lambda_{l-1} + \lambda_l} , \hat{H}_\Theta) = -4\pi (l-1).$$

\section*{Acknowledgements}
The São Paulo Research Foundation (FAPESP) supports L. F. C.  grants 2022/09603-9, 2023/14316-1, and partially supports L. G. grants 2023/13131-8, 2021/04065-6, and L. S. M, grant 2018/13481-0. C. G. was supported by CAPES Ph.D. Scholarship.

The authors gladly acknowledge the anonymous referees for the careful read, leading to useful suggestions that improved the quality of the paper.

	\bibliographystyle{alpha}
	
	\bibliography{main}

\end{document}